\def\cN{\mathcal{N}}
\def\oN{\overline{N}}
\def\oD{\overline{D}}
\def\oS{\overline{S}}
\def\oP{\overline{P}}
\def\oZ{\overline{Z}}
\newcommand{\SPP}{\vspace{0.05cm}\par}
\def\bS{\mathbb{S}}
\def\bSb{\overline{\mathbb{S}}}
\newtheorem{thm}{Theorem}
\newtheorem{rk}[thm]{Remark}
\newtheorem{lem}[thm]{Lemma}
\newtheorem{definition}[thm]{Definition}
\def\tcA{\widetilde{\cA}}
\def\tcB{\widetilde{\cB}}
\def\cCp{\cC^{\bullet}}
\def\cGp{\cG^{\bullet}}
\def\cZ{\mathcal{Z}}
\def\Forbid{\mathrm{Forbid}}
\def\tcG{\widetilde{\cG}}
\def\tcC{\widetilde{\cC}}
\def\tcCp{\widetilde{\cCp}}
\def\tcB{\widetilde{\cB}}
\def\expb{\overline{\exp}}
\newcommand{\Zb}[1]{Z_{\mathcal{B}'_{#1}}}
\newcommand{\tcsef}[1]{\widetilde{C}'_{#1}(z, \mathbf{v})}
\newcommand{\diff}[1]{\frac{\partial}{\partial {#1}}}
\newcommand{\cacher}[1]{}
\def\ds{\displaystyle}
\def\rm{\mathrm}
\def\Set{\mathrm{Set}}
\def\bi{\begin{itemize}}
\def\ei{\end{itemize}}
\def\be{\begin{enumerate}}
\def\ee{\end{enumerate}}
\def\btheo{\begin{thm}}
\def\etheo{\end{thm}}
\def\blem{\begin{lem}}
\def\elem{\end{lem}}
\def\beq{\begin{equation}}
\def\eeq{\end{equation}}
\newcommand{\mc}[1]{\ensuremath{\mathcal{#1}}}
\def\cA{\mathcal{A}}
\def\cC{\mathcal{C}}
\def\cD{\mathcal{D}}
\def\cB{\mathcal{B}}
\def\cG{\mathcal{G}}
\def\cS{\mathcal{S}}
\def\cM{\mathcal{M}}
\def\cP{\mathcal{P}}
\def\vy{\mathbf{y}}
\def\vv{\mathbf{v}}
\def\vvo{\mathbf{v}_0}
\def\vF{\mathbf{F}}
\def\rm{\mathrm}
\def\JacF{\mathbf{Jac}_\textbf{F}}
\begin{document}
\title[Asymptotic study of subcritical graph classes]{Asymptotic study of subcritical graph classes}
\author[Drmota, Fusy, Kang, Kraus and  Ru\'e]{Michael Drmota$^{*}$, \'{E}ric Fusy$^{\dagger}$, Mihyun Kang$^{\ddagger}$, Veronika Kraus$^{*}$ \and Juanjo Ru\'e$^{\dagger}$}
\thanks{$^{*}$Institut f\"ur Diskrete Mathematik und Geometrie, Technische Universit\"at Wien,  michael.drmota@tuwien.ac.at, vkraus@dmg.tuwien.ac.at. Supported by the Austrian Science Foundation FWF, NFN-Project 9604.\\
$^{\dagger}$LIX, \'Ecole Polytechnique, Palaiseau, fusy@lix.polytechnique.fr, rue1982@lix.polytechnique.fr. Supported by the European Research Council under
the European Community's 7th Framework Programme, ERC grant agreement no 208471 - ExploreMaps project.\\
$^{\ddagger}$Institut f\"ur Mathematik, Technische Universit\"at Berlin, kang@math.tu-berlin.de. Supported by the DFG Heisenberg Programme.}

\begin{abstract}
We present a unified general method for the asymptotic study of graphs from the so-called \lq\lq subcritical\rq\rq$ $ graph classes,
which include the classes of cacti graphs, outerplanar graphs, and series-parallel graphs.
This general method works both in the labelled and unlabelled framework.
The main results concern the asymptotic enumeration and the limit laws of properties of random graphs chosen from subcritical classes.
We show that the number $g_n/n!$ (resp. $g_n$) of labelled (resp. unlabelled) graphs on $n$ vertices
from a subcritical graph class ${\cG}=\cup_n {\cG_n}$ satisfies asymptotically the universal behaviour
$$
g_n\ \! =\ \! c\ \!n^{-5/2}\ \!\gamma^n\ \!  (1+o(1))
$$
for computable constants $c,\gamma$, e.g. $\gamma\approx 9.38527$ for unlabelled series-parallel graphs,
and that the number of vertices of degree $k$ ($k$ fixed) in a graph chosen uniformly at random from $\cG_n$, converges (after rescaling)
 to a normal law as $n\to\infty$.
\end{abstract}

\maketitle

\section{Introduction}
Several enumeration problems on classes of labelled planar structures, e.g. labelled planar graphs, were solved recently~\cite{BoGiKaNo07,BKLM07,GiNo09}. These results were successfully used for efficient random generators for labelled planar graphs~\cite{Fu07a}, based on Boltzmann samplers~\cite{DuFlLoSc04}. In contrast, less is known about enumerative results on classes of \emph{unlabelled} planar structures: the only classes treated so far are forests \cite{Otter48} and more recently outerplanar graphs~\cite{BoFuKaVi07b}.

In this paper we present a general framework to enumerate in a unified way a wide variety of labelled and unlabelled classes of graphs. Our main contribution is a universal method for rich classes of \emph{unlabelled} graphs, the so-called \lq\lq subcritical\rq\rq$\!$ classes of graphs, which is established through the novel singularity analysis of counting series and yields asymptotic estimates and limit laws for various graph parameters. In order to make this method more accessible and transparent, we include a brief analysis of the corresponding labelled classes, which was already carried out in~\cite{GiNoRu09}.

Another main contribution is asymptotic estimates and limit laws for various graph parameters of  unlabelled series-parallel graphs. We study the class of unlabelled series-parallel graphs, firstly as an important subclass of planar graphs whose  asymptotic study has not been carried out so far and therefore it is interesting in its own right, and secondly as a concrete prototype-example to illustrate how our general method is applied.
A graph is \emph{series-parallel} (SP-graphs for conciseness) if its $2$-connected components are obtained from a single edge by recursive subdivision of edges (series operation) and duplication of edges (parallel operation). Equivalently, SP-graphs can be defined in terms of minors as graphs
which exclude $K_4$ as a minor. Finally, a graph is series-parallel if and only if its tree-width is at most $2$. Applying our general method, we show that the number ${g}_n$ of unlabelled SP-graphs on $n$ vertices is asymptotically of the form
$$
{g}_n\ \! =\ \!  c\ \! n^{-5/2}\ \! \rho^{-n}\ \! (1+o(1)),
$$
where $\rho\approx 0.10655$. Let ${G}_n$ be a graph chosen uniformly at random among all unlabelled SP-graphs on $n$ vertices. The random variable $X_n$ counting the number of edges (blocks, or cut-vertices) in ${G}_n$ features a central limit law
\[
\frac{X_n - \mathbb{E}\, X_n}{\sqrt{\mathbb{V}{\rm{ar}}\, X_n}} \to N(0,1),
\]
where $\mathbb E\, X_n = \mu n+O(1)$ and $\mathbb V{\rm{ar}}\, X_n= \sigma^2 n+O(1)$ for computable constants $\mu$ and $\sigma^2$.
In addition, the random variable $X_n^k$ counting the number of vertices of degree $k$ (for $k$ fixed) in ${G}_n$ satisfies a central limit law with mean $\mathbb E\, X_n^k= \mu_k n+O(1)$ and variance $\mathbb V{\rm{ar}}\, X_n^k=\sigma_k^2 n+O(1)$ where $\mu_k$ and $\sigma_k^2$ are computable constants. We also present a simple and general condition assuring $\sigma^2>0$, which holds in a wide variety of graph families.

Furthermore we show that the same subexponential term $n^{-5/2}$ appears in other classes of
graphs (in both labelled and unlabelled cases), which is, roughly speaking, inherited from a tree-like nature. This behaviour appears as a consequence
of a \emph{subcritical composition} scheme which appears in the specification of the counting series associated to connected graphs of the class. Such classes of graphs arising from subcritical composition scheme are called \emph{subcritical} classes of graphs, whose formal definition is rather technical and therefore will be suspended to Sections 4 and 5.

We consider block-stable classes of graphs: we say a class $\mathcal{G}$ of graphs \emph{block-stable} if and only if for each graph $G\in \mathcal{G}$, each of its $2$-connected components (also called \emph{blocks}) belongs to $\mathcal{G}$. The class of cacti graphs, the class of outerplanar graphs, the class of SP-graphs, and other classes of graphs defined in terms of a class of $2$-connected components are block-stable. Additionally, we consider classes of graphs which are defined by a \emph{finite} set of $3$-connected graphs. Observe that SP-graphs can be seen as graphs without 3-connected components. We show that the classes of cacti graphs, outerplanar graphs, and SP-graphs, are subcritical and prove that the asymptotic estimates and limit laws for graph parameters of subcritical classes of graphs follow the same asymptotic pattern and limit laws as the class of SP-graphs.

The asymptotic study of subcritical classes of graphs consists of two steps: formal and analytic steps. The formal step consists in translating
Tutte's seminal ideas on decomposing graphs into components of higher connectivity~\cite{BT64,Tutte,Tu73} in terms of the \emph{decomposition grammar}, which is comparable to the ones introduced in~\cite{ChFuKaSh07} (see also~\cite{GLLW08}). This decomposition grammar translates combinatorial conditions into functional equations satisfied by the counting series of various classes of graphs. These counting series depend on the connectivity degree and the way how the graphs are rooted. In the analytic step, we extract \emph{singular expansions} of the counting series from the systems of functional equations. The main ingredient in this step is from~\cite{drmota97systems}, in which precise singular expansions are deduced for very general systems of functional equations. Finally, we derive asymptotic formulas from these singular expansions by extracting coefficients, based on the transfer theorems of singularity analysis~\cite{flajolet,FlaSe}.

We also study natural parameters on a uniform random graph on $n$ vertices chosen from a subcritical class, such as
the number of edges, the number of blocks and the number of cut-vertices. These parameters satisfy normal limit laws, which come from the \emph{additivity} behaviour of these parameters. We compute their expectation and variance, which characterise completely their limit
distribution. These parameters were studied in~\cite{GiNoRu09} for various labelled classes of graphs. In this paper we rediscover these results for labelled case and obtain new results for unlabelled case. Finally, applying the general techniques for systems of functional equations we deduce the limit law for the degree distribution of a graph chosen uniformly at random among all the graphs on $n$ vertices, by analysing in a unified way both labelled and unlabelled subcritical classes. The present work complement the work~\cite{BePaSt09,DrGiNo07,DrGiNo08} for related problems on labelled graphs. Other \emph{non-additive} parameters, such as the size of the \emph{largest} block are not treated here. This parameter was studied in the labelled framework in~\cite{GiNoRu09} and~\cite{PaSt09}. In the latter, the theory of Boltzmann samplers is applied.
\\\\
\textbf{Outline of the paper.} The paper is organised as follows. In Section~\ref{sec:pre} we introduce the notation and the terminology used in this paper. All the analytic machinery needed in order to deal with systems of functional equations, extraction of coefficients and limit laws is introduced in Section~\ref{sec:tools}. In this section, we adapt the results from~\cite{drmota97systems} to our context and recall the transfer theorems of singularity analysis from~\cite{FlaSe} and other key results necessary to derive limit laws for graph parameters. We also introduce a novel result which assures the positivity of the variance under certain easy conditions. In Section~\ref{sec:sub_labelled} we obtain general results for the enumeration of labelled subcritical graph classes. These results are generalised to unlabelled subcritical graph classes in Section~\ref{sec:sub_unlabelled}. These enumerative results we obtain are
consequences of the general framework presented in Section~\ref{sec:tools}. Concrete examples are studied in Section~\ref{sec:ex}. This section includes the analysis of unlabelled cacti graphs, unlabelled outerplanar graphs, and unlabelled SP-graphs. We also obtain general enumerative results for classes of graphs
defined by a {finite} set of $3$-connected graphs. Limit laws are studied in Section~\ref{sec:limitlaws}. All parameters studied in this section give rise to normal distributed random variables, independently of the class (in either labelled or unlabelled setting). The degree distribution, a technically more involved and interesting parameter, is studied in Section~\ref{sec:degdis}. Finally, the constant growth for unlabelled SP-graphs is computed in Section~\ref{sect: constant}, using a numerical method.

\section{Graph classes, block-decomposition, and counting}\label{sec:pre}
\subsection{Combinatorial classes and counting series}\label{sec:def_series}
As described in~\cite{BeLaLe,FlaSe}, a \emph{labelled combinatorial class} is a set $\cG=\cup_{n\geq 0}\,\cG_n$ of objects such that $\cG_n$ is finite for $n\geq 0$. Each object $g$ in $\cG_n$ has $n$ labelled \lq\lq atoms\rq\rq (e.g., vertices of a graph)  carrying distinct labels in $\{1,\dots,n\}$ and $n$ is called the \emph{size} of $g$.
Two objects of $\cG$ (of the same size $n$) are called \emph{isomorphic} if one is defined
from the other by relabelling. It is always assumed that a combinatorial class is stable
under relabelling, e.g., for a graph class $\cG$ we assume that a graph $g$ is in $\cG$
if and only if all graphs isomorphic to $g$ are also in $\cG$. This way the symmetric
group $\frak{S}_n$ acts on $\cG_n$: for  $\sigma\in\frak{S}_n$ and $g\in\cG_n$, $\sigma\cdot g$ has the same vertex and edge set as $g$, but each label $i$ in $g$ is replaced by $\sigma^{-1}(i)$ in $\sigma\cdot g$ and we write $\sigma\cdot g\equiv g$.
The set of objects of $\cG_n$ considered up to isomorphism
is denoted by $\tcG_n$ (in other words $\tcG_n=\cG_n/\frak{S}_n$), and the combinatorial class $\tcG:=\cup_n\, \tcG_n$
is called the \emph{unlabelled combinatorial class} associated with $\cG$.
For counting purpose one classically considers the exponential generating function (shortly the EGF)
in the labelled setting:
$$
\cG(z):=\sum\nolimits_{n\geq 0}\frac1{n!}|\cG_n|z^n,
$$
and the ordinary generating function (OGF) in the unlabelled setting:
$$
\tcG(z):=\sum\nolimits_{n\geq 0}|\tcG_n|z^n.
$$
For unlabelled enumeration, it proves convenient to consider a refinement of the OGF,
called the cycle-index sum, a series in infinitely many variables $s_1,s_2,\ldots$ defined
as
$$
Z_{\cG}(s_1,s_2,\ldots):=\sum\nolimits_{n\geq 0}\frac1{n!}\sum_{\substack{(\sigma,g)\in\frak{S}_n\times\cG_n\\ \sigma\cdot g=g}}w_{\sigma},
$$
where $w_{\sigma}:=s_1^{c_1}s_2^{c_2}\cdots s_n^{c_n}$ is the weight-monomial
of a permutation $\sigma$ of cycle type $1^{c_1}2^{c_2}\cdots n^{c_n}$ (that is, $c_i$ cycles of length $i$ for $i=1,\dots,n$).
The OGF is recovered from a specialisation of the cycle index sum~\cite{Ha} by replacing $s_i$ by $z^i$ for each $i\ge 1$:
$$
\tcG(z)=Z_{\cG}(z,z^2,z^3,\ldots).
$$
We will consider classes of graphs of various types depending on whether one marks vertices or not. All graphs are assumed to be simple (no loops nor multiple edges) and are labelled at vertices. A (vertex-)\emph{rooted graph} is a graph with a distinguished (labelled) vertex. A \emph{derived graph} or \emph{pointed graph} is a graph where one vertex is distinguished but not labelled (the other $n-1$ vertices have distinct labels in $\{1,\dots,n-1\}$).
Isomorphisms between two pointed graphs (or between two derived graphs) have to respect the distinguished vertex.

Given a graph class $\cG$, the \emph{rooted} class $\cGp$ is the class of rooted graphs from $\cG$, and the \emph{derived} class $\cG'$ is the class of derived graphs from $\cG$; since $|\cG'_{n-1}|=n|\cG_n|$ and $|\cGp_n|=n|\cG_n|$, we have respectively $\cG'(z)=\frac{\mathrm{d}}{\mathrm{d}z}\cG(z)$ and $\cGp(z)=z\cG'(z)$.

\subsection{Block-decomposition of a graph}\label{subsec:block_stable}
For $k\geq 0$, a graph is $k$-connected if one needs to delete at least $k$ vertices to disconnect it. Obviously, a graph $G$ is a set of its connected components. For the decomposition from connected graphs into 2-connected graphs we use the block structure of a connected graph. A block of a graph $G$ is a maximal 2-connected induced subgraph of $G$. We say a vertex of $G$ is incident to a block $B$ of $G$ if it belongs to $B$. The block structure of $G$ yields a bipartite tree with the vertex set consisting of two types of nodes, i.e. cut-vertices and blocks of $G$, and the edge set describing the incidences between the cut-vertices and blocks of $G$. This suggests a natural decomposition of connected graphs into 2-connected graphs and this holds also for rooted graphs. The root-vertex $v$ of a rooted graph $G$ is incident to a set of blocks and to each non-root vertex on these blocks is attached a rooted connected graph. In other words, a rooted connected graph rooted at $v$ is uniquely obtained as follows: take a set of derived 2-connected graphs and merge them at their pointed (distinguished but not labelled) vertices so that $v$ is incident to these derived 2-connected graphs, then replace each non-root vertex $w$ in these blocks by a rooted connected graph rooted at $w$ (which is allowed to consist of a single vertex and in this case it has no effect).

Through the entire paper, given a class of graphs $\cG$, we denote by $\cC$ (resp. $\cB$) the subfamily of connected (resp. $2$-connected) graphs in $\cG$. In the language of symbolic combinatorics from~\cite{BeLaLe,FlaSe}, the block-decomposition described above translates into the fundamental equations:
\beq\label{eq:GC}
\cG=\Set(\cC), \eeq
\beq\label{eq:CpBp}
\cCp=\cZ\cdot\Set(\cB'\circ\cCp),
\eeq
where the factor $\cZ$ in the last equation takes account of the root vertex (which is distinguished  and labelled),
the symbol $\cdot$ denotes the partitional product on combinatorial classes, and the symbol $\circ$ denotes substitution at an atom (see~\cite{BeLaLe} for definitions). As shown in Table~\ref{fig:dict}, there is a well-known dictionary~\cite{BeLaLe, FlaSe}, both in the labelled and in the unlabelled setting,
that translates equations relating combinatorial classes into equations relating the associated counting series.

\begin{table}[htb]
\begin{center}
\begin{tabular}{c|c|c|c}
Construction & Class & Labelled setting & Unlabelled setting \\
\hline
\hline
Sum & $\cC=\cA+\cB$ & $\cC(z)=\cA(z)+\cB(z)$& $\tcC(z)=\tcA(z)+\tcB(z)$\phantom{$\widetilde{\cC^{\cC}}$}\\[.1cm]
Product & $\cC=\cA\cdot\cB$ & $\cC(z)=\cA(z)\cdot\cB(z)$ & $\tcC(z)=\tcA(z)\cdot\tcB(z)$\\[.1cm]
Set & $\cC=\Set(\cB)$ & $\cC(z)=\exp(\cB(z))$ & $\tcC(z)=\exp\big(\sum_{i\geq 1}\tfrac1{i}\tcB(z^i)$\big)\\[.2cm]
Substitution & $\cC=\cA\circ\cB$ & $\cC(z)=\cA(\cB(z))$ & $\tcC(z)=Z_{\cA}(\tcB(z),\tcB(z^2),\ldots)$
\end{tabular}
\end{center}
\caption{The dictionary that translates combinatorial constructions into operations on counting series.}
\label{fig:dict}
\end{table}
A graph class $\cG$ is called \emph{block-stable} if it contains
the link-graph $\ell$, which is a graph with one edge together with its two (labelled) end vertices, and satisfies the property that a graph $G$ belongs to $\cG$ if and only if all the blocks of $G$ belong to $\cG$. Block-stable classes include classes of graph specified by a finite list of forbidden minors that are all 2-connected, for instance, planar graphs ($\Forbid(K_5,K_{3,3})$), series-parallel graphs ($\Forbid(K_4)$), and outerplanar graphs ($\Forbid(K_4,K_{3,2})$).
For a block-stable graph class, \eqref{eq:GC} and~\eqref{eq:CpBp} translates into equations of EGFs in the labelled setting:
$$
\cG(z)=\exp(\cC(z)),
$$
$$
\cCp(z)=z\exp(\cB'(\cCp(z))),
$$
and into equations of OGFs in the unlabelled setting:
$$
\tcG(z)=\exp\left(\sum\nolimits_{i\geq 1}\tfrac1{i}\tcC(z^i)\right),
$$
$$
\tcCp(z)=z\exp\left(\sum\nolimits_{i\geq 1}\tfrac1{i}Z_{\cB'}(\tcCp(z^i),\tcCp(z^{2i}),\tcCp(z^{3i}),\ldots)\right).
$$
A refined version of the last equation will turn out to be useful later, which expresses \eqref{eq:CpBp} in terms of the cycle index sum:
\beq\label{eq:cyc_C_B}
Z_{\cCp}(s_1,s_2,\ldots)=s_1\exp\left(\sum\nolimits_{i\geq 1}\tfrac1{i}Z_{\cB'}(Z_{\cCp}(s_i,s_{2i},\ldots),Z_{\cCp}(s_{2i},s_{4i},\ldots),\ldots)\right).
\eeq

With these systems of equations of EGFs, OGFs and the cycle index sum, we will perform  singularity analysis under certain general conditions -- so-called \lq\lq subcriticallity\rq\rq$\!$ conditions -- in order to get asymptotic results (asymptotic enumeration and limit laws for graph parameters). The tools for singularity analysis are described in the next section.

\section{Tools for the asymptotic analysis}\label{sec:tools}

The purpose of this section is to collect several facts on solutions of functional equations.
Most of the following properties (and proofs) can be found in \cite{Drmota,FlaSe},
which serve as general references for this subject.
For our purposes we, however, need to adjust several points  with additional properties.

\subsection{Singular expansions of multivariate series}\label{sec:singularexpansion}
We consider a power series of the form
$y=y(z,v_1,\ldots,v_k)$ in general with nonnegative coefficients,
where $z$ is singled out as the \emph{primary variable} and $v_1,\ldots,v_k$
are secondary variables (possibly there is none). From now on
we use the abbreviation $\vv$ for $(v_1,\ldots,v_k)$.
A valuation $\vvo$ of $\vv$ is called \emph{admissible} if all components of $\vvo$
are positive and if $y(z,\vvo)$ is a valid power series in $z$, i.e., $[z^n]y(z,\vvo)<\infty$
for each $n\geq 0$.

Consider a fixed positive valuation $(z_0,\vvo)$ of $(z,\vv)$. Then $y$ is said to have a \emph{square-root expansion} around $(z_0,\vvo)$
if $\vvo$ is an admissible valuation, $z_0$ is the radius of convergence of $y(z,\vvo)$,  and the representation
$$
y(z,\vv)=a(z,\vv)-b(z,\vv)\sqrt{1-z/\rho(\vv)}
$$
holds in a neighbourhood of $(z_0,\vvo)$ (except in the part where $1-z/\rho(\vv)\in\mathbb{R}^-$),
where the functions $a(z,\vv)$ and $b(z,\vv)$ are analytic at $(z_0,\vvo)$, $b(z_0,\vvo)>0$, $\rho(\vv)$ is analytic at $\vvo$, and $\rho(\vvo)=z_0$.
The function $\rho(\vv)$ in the expansion
is called the \emph{singularity function} of $y$
relative to $z$.

Moreover, we will be particularly interested in functions $y(z,\vv)$ with such a singular behaviour,
where $\rho(\vv)$ is the only singularity on the circle $|z| = |\rho(\vv)|$
(if $\vv$ varies in a suitable neighbourhood of $\vv_0$) and
$y(z,\vv)$ can be analytically continued to the region $\{ z\in \mathbb{C} : |z| < |\rho(\vv)|+\epsilon,\
1 - z/\rho(\vv) \not\in \mathbb{R}^-\}$ (for some $\epsilon>0$ that
is uniform in this neighbourhood of $\vv_0$).
In this case one can use an asymptotic transfer principle
by Flajolet and Odlyzko \cite{flajolet}  (see also Section~\ref{sec:transfer})
to obtain asymptotics for the coefficient $[z^n]\, y(z,\vv)$ of
the form
$$
[z^n]\, y(z,\vv) = \frac{b(\rho(\vv),\vv)}{2\sqrt \pi}\,  n^{-3/2}\, \rho(\vv)^{-n}\, \left( 1 + O\left( n^{-1} \right) \right).
$$
Similarly, $y$ is said to have a singular expansion of order $3/2$ at $(z_0,\vvo)$ if the expansion is of the form
$$
y(z,\vv)=a(z,\vv)+b(z,\vv)\cdot(1-z/\rho(\vv))^{3/2},
$$
with otherwise the same conditions as for square-root expansions.

It is not difficult to show that if a function $y(z,\vv)$ admits a square-root expansion around $(z_0,\vvo)$, then the function $\int_0^z\, y(x,\vv)\, \mathrm{d}x$ admits a singular expansion of order $3/2$ at $(z_0,\vvo)$ (see \cite{DrGiNo08}).

\subsection{Singularity analysis of systems of functional equations}
\label{sec:systemequations}
Consider a vector $\vy=(y_1,\ldots,y_r)$ of formal power series in the formal variables $z,\vv$, with $\vv=(v_1,\ldots,v_k)$ which is a solution of an equation-system of the form
$$
\rm{(E)}:\qquad \vy=\vF(\vy;z,\vv),
$$
where $\vF(\vy;z,\vv)=(F_1(\vy;z,\vv),\ldots,F_r(\vy;z,\vv))$ are power series with nonnegative coefficients. In order to avoid trivial situations as in equations like $y = zy^2$, where $y(z) = 0$ is the only solution, we will assume that $\vy = {\bf 0}$ is {\bf not} a solution. We call such a system \emph{positive system}.

The \emph{Jacobian matrix} $\JacF$ of the system is the $r\times r$ matrix whose $(i,j)$-coefficient is $\partial F_j/\partial y_i$. The \emph{singularity equation} associated with $\rm{(E)}$ is
$$
\rm{(S)}:\qquad 0=\mathrm{Det}\left(\mathbf{I_r}-\JacF\right),
$$
where $\mathbf{I_r}$ is the $r\times r$ identity matrix. The \emph{singularity system} of ($\rm{E}$) is the system $\{\rm{(E)},\ \rm{(S)}\}$, which has $r+1$ equations (the first $k$ ones for ($\rm{E}$), the last one for ($\rm{S}$)).

The \emph{dependency graph} of $\vF$ is the directed graph on $V=\{1,\ldots,r\}$ such that there is an edge from $i$ to $j$ if and only if the $j$th component $F_j$ of $\vF$ really involves $y_i$, that is, the power series $\partial F_j/\partial y_i$ is not $0$. The system $\rm{(E)}$ is called \emph{strongly recursive} if the dependency graph of $\vF$ is strongly connected (which means that every pair of vertices is linked by a directed path).

Informally this condition says that  no subsystem of $\rm{(E)}$ can be solved prior to the whole system of equations. An equivalent condition is that the corresponding adjacency matrix and, thus, the Jacobian matrix  $\JacF$ is irreducible. The most important property of irreducible matrices ${\bf A}$
with non-negative entries is the Perron-Frobenius Theorem (see \cite{Minc}) saying that there is a unique positive and simple eigenvalue $\lambda_{max}=\lambda_{max}({\bf A})$ with the property that all other eigenvalues $\lambda$ satisfy $|\lambda|\le \lambda_{max}$. This unique positive
eigenvalue is a strictly increasing function of the entries of the non-negative matrix. More precisely, if ${\bf A} = (a_{ij})$ and ${\bf A}' = (a_{ij}')$ are different irreducible non-negative matrices with $a_{ij}\le a_{ij}'$ (for all $i,j$) then $\lambda_{max}({\bf A})\le \lambda_{max}({\bf A}')$.
Moreover, every principal submatrix has a smaller dominant eigenvalue.

Consider an admissible valuation $\vvo$ of $\vv$ and assume that the radius of convergence $z_0$ of $z\to y_1(z,\vvo)$ is finite and strictly positive.
If the system $\rm{(E)}$ is strongly recursive then it is easily shown that, for each $i\in \{1,\ldots,r\}$, $z_0$ is the radius of convergence of $y_i$
(due to the dependency graph being strongly connected) and $y_i(z,\vvo)$ converges to a finite constant $\tau_i>0$ as $z\to z_0$ (due to the fact that $\vF$ is nonlinear according to $\vy$).

Note also that $\rm{(S)}$ says that $\lambda = 1$ is an eigenvalue of $\JacF$. We observe that if $\vy=\vy(z,\vvo)$ is an analytic solution of
a strongly recursive system $\rm{(E)}$ that is singular at $z=z_0$ and $\vy_0 = \vy(z_0,\vvo)$ is a finite vector,
then $\rm{(S)}$ is satisfied for $(y;z,\vv)=(y_0;z_0,\vvo)$ provided that $(y_0;z_0,\vvo)$ is an inner point of the region of convergence
of $\vF$. (This explains the term singularity system.) However, in order to obtain the
radius of convergence (in positive and analytically well-founded systems) we will need the condition that
$\lambda_{max}(\JacF) = 1$ (see \cite{BBY09}).

\begin{definition}\label{Def3}
A system $\rm{(E)}$ is called \emph{analytically well-founded}
at a fixed positive valuation $\vvo$ if the following conditions
are satisfied:
\be
\item
The valuation $\vvo$ is admissible for the system of power series
$\vF(\vy;z,\vvo)$, we have
$[\vy^{\bf m} z^n]\, F_i(\vy;z,\vvo)\ge 0$  for all $i$,
and $[z^n]\vF({\bf 0};z,\vvo) \ne {\bf 0}$.
\item The function $\vF$ is not affine in $\vy$ and depends on $z$,
that is, there are $i$ and $j$ in $\{1,\ldots,r\}$
such that $\partial^2\vF/\partial y_i\partial y_j\neq 0$
and  $\partial\vF/\partial z\neq 0$,

\item
There exist $z_0> 0$ and a positive vector $\vy_0$ for which
$(\vy_0;z_0,\vvo)$ is an inner point of the
region of convergence of $F_i(\vy;z,\vv)$ (for $i\in \{1,\ldots,r\}$) and
$\rm{(E)}$ and $\rm{(S)}$ are satisfied for $(\vy;z,\vv)=(\vy_0;z_0,\vvo)$
and that all eigenvalues $\lambda$ of the Jacobian matrix
$\JacF(\vy_0;z_0,\vvo)$ satisfy $|\lambda|\le 1$.
\ee
\end{definition}

It is not clear that the singularity system
$\rm{(E)}$, $\rm{(S)}$ has a proper solution. However, if
$\rm{(E)}$ is a positive and strongly recursive system and
if $\vF(\vy;z,\vv)$ is a vector of entire functions in
$\vy$ and $z$ (for $\vv$ in a neighbourhood of $\vv_0$)
then it is always solvable. In particular it is enough to
consider the singularity of the power series solution of
$\rm{(E)}$. We also get the property
$\lambda_{max}(\JacF(\vy_0;z_0,\vvo)) = 1$.

Note that the radius of convergence $z_0$ of the solution $\vy$
of a positive and strongly recursive system $\rm{(E)}$
is always finite. Furthermore, $\vy(z_0)$ is finite, too, if
$\vF$ is not affine in $\vy$.

The following theorem contains the main properties of
systems of equations $\rm{(E)}$ that we will use in the sequel
(for a proof see \cite{Drmota}).

\btheo\label{Thsystem1}
Suppose that an equation $\rm{(E)}$ is \emph{strongly recursive} and
\emph{analytically well-founded}
at a valuation $\vvo$. Then there is a unique vector of
power series $\vy = \vy(z,\vv)$
in the variables $z,\vv$ that satisfies $\rm{(E)}$.
Furthermore, the components of $\vy$ have non-negative coefficients
$[z^n]\, y_i(z,\vvo)$ (for $i\in \{1,\ldots,r\}$)
and a square-root expansions around $(z_0,\vvo)$.

Moreover, if $[z^n]\,y_1(z,\vvo) > 0$ for all $n\ge n_0$
then $z_0$ is the only singularity on the radius of convergence
$|z| = z_0$ and all components $\vy$ can be analytically continued to
the region $D = \{ z\in \mathbb{C} : |z| < |\rho(\vv)|+\epsilon,\
1 - z/\rho(\vv) \not\in \mathbb{R}^-\}$, where $\epsilon>0$ is uniform
for $\vv$ in some neighbourhood of $\vvo$.
\etheo

The condition $[z^n]\,y_1(z,\vvo) > 0$ (for $n\ge n_0$)
is usually verified by
using a combinatorial interpretation of the coefficients.
In the case of a single equation $y = F(y;z,v) = \sum_{n,m,k}
a_{n,m,k} z^ny^mv^k$ it is also possible to check this
with the help of conditions on the coefficients $a_{n,m,k}$.
For example, if there exist $m>1$, $n_1,n_2,n_3$ and
$k_1,k_2,k_3$ with $a_{n_1,m,k_1} \ne 0$,  $a_{n_2,m,k_2} \ne 0$,
$a_{n_3,m,k_3} \ne 0$ such that $n_2-n_1$ and $n_3-n_1$ are
coprime or if there are $n_1,n_2$, $m_1>1$, $m_2>1$, $k_1,k_2$
with  $a_{n_1,m_1-1,k_1} \ne 0$,  $a_{n_2,m_2-1,k_2} \ne 0$
such that $n_1(m_2-1) - n_2(m_1-1) = 1$ then it also
follows that $[z^n]\,y(z,v_0) > 0$ (for $n\ge n_0$)
-- compare it with \cite{MeirMoon} and the methods used in the proof
of Lemma~\ref{Lepositive}.

\subsection{Transfer theorems of singularity analysis and central limit theorems}
\label{sec:transfer}
As detailed in the book by Flajolet and Sedgewick~\cite{FlaSe},
the singular  behaviour of a counting series can be translated to an asymptotic estimate
of the counting sequence, by coefficient extraction (which is done via contour
integrals in the complex plane).

Let $z_0$ be a non-zero complex number, and $\epsilon$ and $\delta$ positive (real) numbers. Then the region
\[
\Delta = \Delta(z_0,\epsilon,\delta) = \{z \in \mathbb{C}:
 |z|<z_0+\epsilon,\, |\arg(z/z_0-1)|>\delta \}
\]
is called a $\Delta$-region. The basic observation (see \cite{flajolet})
is that a singular expansion around the singularity $z_0$ that is
uniform in a $\Delta$-region transfers directly to an asymptotic
expansion for the coefficients.
Suppose that a function $y(z)$ is analytic in a region $\Delta$-region
$\Delta(z_0,\epsilon,\delta)$ and satisfies
\[
y(z)  =  C \left( 1 -  z/{z_0}\right)^{\alpha} +
O\left( \left( 1- z/{z_0}\right)^{\beta}\right), \qquad
z\in\Delta(z_0,\epsilon,\delta),
\]
where $\beta > \alpha$ and $\alpha$ is a non-negative integer. Then we have
\begin{equation}\label{eqanexp}
[z^n]\, y(z) = C \frac{n^{-\alpha-1}}{\Gamma(-\alpha)} z_0^{-n}
 + O\left(z_0^{-n} n^{\max\{-\alpha-2,-\beta-1\}}\right).
\end{equation}
It is an important additional observation that the implicit constants are also effective which means that the $O$-constant in the expansion of $y(x)$
provides explicitly an $O$-constant for the expansion for $[z^n]\, y(z)$, and that the same statement is true if we change the $O$-constants by $o$-constants.
See \cite{flajolet} for details.
In particular it follows that singular expansions that are uniform in some
parameter also translate into asymptotic expansions of the form
(\ref{eqanexp}) with a uniform error term.
In particular it applies for functions $y(z,\vv)$
with square-root expansion around $(z_0,\vvo)$ or with
singular expansion of order $3/2$, provided that they can be
analytically continued to a $\Delta$-region.

Next we restrict ourselves to univariate $v\in \mathbb{C}$ and
are interested in bivariate asymptotic expansions of the
coefficients $[z^nv^m]\, y(z,v)$ when $y$ has a square-root expansion.
We introduce the function $\mu$ defined as
\[
\mu(v) = -\frac{v\rho'(v)}{\rho(v)}.
\]
We call it \emph{regular} in a closed interval $\overline V$ of the
positive real line if $\mu(v)$ is strictly increasing on $\overline V$.
In this case $\nu = \mu^{-1}$ denotes the inverse function of $\mu$.
Furthermore, we set $\sigma(v) = \sqrt{v \mu'(v)}$ which is positive
in the regular case. If we additionally assume that $z_0 = \rho(v_0)$ is
the only singularity of $y(z,v)$ for $|z| = z_0$ and $|v|= v_0$ then
we have uniformly for
$m/n \in \nu(\overline V)$
\begin{align}
&[z^nv^m]\, y(z,v)  \label{eqLetransfer3}\\
&= \frac{b(\rho(\nu(m/n)),\nu(m/n))}{2\sqrt{2}\,\pi\,
(n/m)\sigma(\nu(m/n))\, mn} \rho(\nu(m/n))^{-n} \nu(m/n)^{-m}
\left( 1 + O\left( n^{-1}\right)\right). \nonumber
\end{align}
(For a proof we refer to \cite{DrmEJC}).
Again it is clear that this has a direct analogue
for functions with a singular expansion of order $3/2$.

It is relatively easy to check the condition that
$z_0 = \rho(v_0)$ is
the only singularity of $y(z,v)$ for $|z| = z_0$ and $|v|= v_0$
and also that $\mu$ is regular
-- compare it with the remark following the proof of Lemma~\ref{Lepositive}.

Suppose that $m = \mu n + O(\sqrt n)$.  The asymptotic expansion (\ref{eqLetransfer3}) for
the coefficient $[z^nv^m]\, y(z,v)$ behaves locally like
\begin{equation}\label{eqlocalCLT}
[z^nv^m]\, y(z,v) = \frac{b(\rho(1),1)}{2\sqrt{\pi}\, \sigma(1) n^2}
 e^{-(m-\mu(1)n)^2/(2 \sigma(1)^2 n)}\rho(1)^{-n}
\left( 1 + O(n^{-1/2})\right),
\end{equation}
which suggests that there is a central limit theorem behind.
Actually this is true.

Let $X_n$ be a random variable with probability distribution
$\mathbb{P}(X_n = m) = [z^nv^m]\, y(z,v)/ [z^n]\, y(z,1)$, then
the asymptotic expansion (\ref{eqlocalCLT}) is indeed a local limit theorem for $X_n$.
In general, there is a \emph{combinatorial central limit theorem}. For the sake of brevity we do not list all possible versions but
only for a single equation and we comment on systems of equations.
\btheo\label{ThcombCLT1}
Suppose that $X_n$ is a sequence of random variables whose probability generating
function is given by
\[
\mathbb{E}\, v^{X_n} = \frac{[z^n]\,y(z,v)}{[x^n]\, y(z,1)},
\]
where $y(z,v)$ is a power series that is the (analytic) solution of
the functional equation $y = F(y;z,v)$, where $F(y;z,v)$ satisfies the
assumptions of Theorem~\ref{Thsystem1}. In particular, let
$z=z_0>0$ and $y=y_0>0$ be the proper solution of the
system of equations $y = F(y;z,1)$, $1 = F_y(y;z,1)$
\footnote{For convenience we use the notation $F_y$ to denote the
partial derivative $\partial F/\partial y$.}.
Set
\begin{align*}
\mu &= \frac {F_v}{z_0F_z},\\
\sigma^2 &= \mu + \mu^2 +\frac {1}{z_0F_z^3F_{yy}}\Bigl( F_z^2(F_{yy}F_{vv} - F_{yv}^2)
-2F_zF_v(F_{yy}F_{zv}-F_{yx}F_{yv}) \\
&\quad \quad \ \ \ + F_v^2(F_{yy}F_{zz} - F_{yz}^2) \Bigr),
\end{align*}
where all partial derivatives are evaluated at the point $(y_0;z_0,1)$.
Then the asymptotic mean and variance of $X_n$ satisfy
\[
\mathbb{E}\, X_n = \mu n + O(1) \quad\mbox{and}\quad
\mathbb{V}{\rm{ar}}\, X_n = \sigma^2 n + O(1)
\]
and if $\sigma^2> 0$
\[
\frac{X_n - \mathbb{E}\, X_n}{\sqrt{\mathbb{V}{\rm{ar}}\, X_n}} \to N(0,1).
\]
\etheo
Note that
$\mu= \frac {F_v}{z_0F_z}$ is the same as the other $\mu$ defined above as $\mu(v) = -\frac{v\rho'(v)}{\rho(v)}$ when $v=1$. Similarly we have $\sigma^2 = \sigma^2(1)$.

The situation for a system of equations is quite similar
(even if we additionally consider a random vector ${\bf X}_n$ instead
of a random variable $X_n$).
Suppose that $\vy=(y_1,\ldots,y_r)$ is the solution of a system
of equations $({\rm E})$ and that the assumptions of Theorem~\ref{Thsystem1}
are satisfied. Furthermore set $y(z,\vv) = H(\vy(z,\vv);z,\vv)$ for a
power series $F$ with non-negative coefficients, for which
$(\vy_0;z_0,{\bf 1})$ is inner point of the region of convergence and
we have $H_{\vy}(\vy_0;z_0,{\bf 1}) \ne 0$. Then the random vector
${\bf X}_n = (X_{1;n},\ldots,X_{k;n})$ with probability generating
function
\[
\mathbb{E}\, \vv^{{\bf X}_n} =
\mathbb{E}\, v_1^{X_{1;n}} \cdots v_k^{X_{k;n}} =
\frac{[z^n]\, y(z,\vv)}{[z^n]\, y(z,1)}
\]
is asymptotically normal with asymptotic mean
$\mathbb{E}\,{\bf X}_n = n \mbox{\boldmath{$ \mu $}} + O(1)$ and covariance matrix
$\mathbb{C}{\rm {ov}}\,{\bf X}_n = n{\bf \Sigma} + O(1)$, where
\begin{equation} \label{meanformula}
\mbox{\boldmath{$ \mu $}} = \frac 1{z_0}\frac {{\bf b^T}
{\bf F}_{\bf v}(\vy_0;z_0,{\bf 1})}
{{\bf b^T} {\bf F}_x(\vy_0;z_0,{\bf 1})},
\end{equation}
in which ${\bf b}$ is (up to scaling) the unique positive left eigenvector
of $\JacF$, and ${\bf \Sigma}$ is a positive semi-definite matrix
which can be computed with the help of second derivatives
(for details see \cite{Drmota}). In many applications ${\bf b}$ appears to be $(1,\ldots,1)^{\bf T}$, which is due to the special structure of the systems of equations. The source of the central limit theorem is actually a singular expansion with singular term $\left(1-\frac{z}{\rho(v)}\right)^\alpha$ with $\alpha \notin \mathbb{N}$, and thus a central limit theorem with the same mean and variance follows also for generating functions given by $\int y(z,\vv) = \int F(\vy(z,\vv);z,\vv)$, which have the same singularity, but of order $\frac{3}{2}$.

Finally we comment on the positivity of $\sigma^2=\sigma^2(1)$ in the case of a single functional equation $y = F(y;z,v)$.
(Equivalently this concerns the question whether
$\mu(v) = vF_v(y(\rho(v),v);\rho(v),v)/(\rho(v)F_z(y(\rho(v),v);\rho(v),v))$
is regular in a neighbourhood of $v=1$.)

\begin{lem}\label{Lepositive}
Let $y = F(y;z,v)= \sum_{n,m,k} a_{n,m,k} z^ny^mv^k$ be an
analytically well founded equation for the valuation $v_0 = 1$
as given in Theorem~\ref{ThcombCLT1}.
Suppose that there are three triples $(n_j,m_j,k_j)$, $j = 1,2,3$, of
integers with $m_j> 0$, $j=1,2,3$,  and
\[
\left| \begin{array}{rrr}
n_1 & m_1-1 & k_1 \\ n_2 & m_2-1 & k_2 \\n_3 & m_3-1 & k_3
\end{array} \right| \ne 0
\]
such that $a_{n_j,m_j,k_j} \ne 0$, $j=1,2,3$.
Then $\sigma^2> 0$.
\end{lem}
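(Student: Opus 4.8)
The plan is to show that $\sigma^2 > 0$ by contradiction: assume $\sigma^2 = 0$ and derive that all the coefficients $a_{n,m,k}$ appearing in $F$ must be supported on a set of triples $(n,m,k)$ lying in a two-dimensional affine sublattice, contradicting the rank hypothesis on the three triples $(n_j, m_j, k_j)$. First I would recall from Theorem \ref{ThcombCLT1} that $\sigma^2 = \sigma^2(1)$ and that $\mu(v) = -v\rho'(v)/\rho(v)$, so $\sigma^2(1) = \mu'(1) \cdot 1$ (up to the precise normalization $\sigma(v)^2 = v\mu'(v)$ from Section \ref{sec:transfer}); hence $\sigma^2 = 0$ is equivalent to $\mu'(1) = 0$. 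Since $\mu$ is analytic near $v = 1$ and, by the structure of the problem, one expects $\mu$ to be either strictly monotone or constant, the key reduction is: $\sigma^2 = 0$ forces $\mu(v)$ to be \emph{constant} in a neighborhood of $v = 1$, i.e. $\mu(v) \equiv \mu$.

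Next I would translate the constancy of $\mu(v)$ back into a rigidity statement about $F$. Write $\tau = \tau(v) := y(\rho(v), v)$ for the critical value, so that the singularity system reads $\tau = F(\tau; \rho, v)$ and $1 = F_y(\tau; \rho, v)$, where $\rho = \rho(v)$. The quantity $\mu(v) = v F_v / (\rho F_z)$, all derivatives evaluated at $(\tau(v); \rho(v), v)$. The strategy is to exploit a scaling/change-of-variables symmetry: if $\mu(v) \equiv \mu$ is constant, one can show (differentiating the two singularity equations in $v$ and doing some algebra with the chain rule, as in \cite{MeirMoon} and the "methods used in the proof" alluded to after Theorem \ref{Thsystem1}) that $F$ must be invariant under a one-parameter group of substitutions of the form $(y, z, v) \mapsto (\lambda^a y, \lambda^b z, \lambda^c v)$ — equivalently, that every monomial $a_{n,m,k} z^n y^m v^k$ actually occurring in $F$ satisfies a single linear relation $\alpha n + \beta(m-1) + \gamma k = \text{const}$ for fixed $(\alpha, \beta, \gamma) \neq 0$. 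The appearance of $m-1$ rather than $m$ is exactly the book-keeping coming from the fact that $y = F(y;\ldots)$ compares $y^1$ on the left with $y^m$ on the right, which is why the determinant in the hypothesis is written with the column $m_j - 1$.

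Once that rigidity is in hand, the conclusion is immediate: the three triples $(n_j, m_j, k_j)$ with $a_{n_j, m_j, k_j} \neq 0$ would all satisfy $\alpha n_j + \beta(m_j - 1) + \gamma k_j = c$ for $j = 1, 2, 3$, so the augmented $3 \times 4$ system would be rank-deficient; but a nonzero solution $(\alpha, \beta, \gamma)$ to the homogeneous part together with the common value $c$ forces
\[
\left| \begin{array}{rrr}
n_1 & m_1-1 & k_1 \\ n_2 & m_2-1 & k_2 \\ n_3 & m_3-1 & k_3
\end{array} \right| = 0,
\]
contradicting the hypothesis. Therefore $\sigma^2 > 0$.

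The main obstacle I anticipate is the middle step: rigorously showing that $\mu(v) \equiv \text{const}$ (equivalently $\mu'(1) = 0$) forces the monomial support of $F$ into an affine hyperplane. This requires carefully implicitly differentiating the singularity system $\{\tau = F(\tau;\rho,v),\ 1 = F_y(\tau;\rho,v)\}$ twice in $v$, extracting expressions for $\rho'$, $\rho''$, $\tau'$, $\tau''$ purely in terms of the partial derivatives $F_z, F_v, F_{yy}, F_{yz}, F_{yv}, F_{zz}, F_{zv}, F_{vv}$ at the critical point, and then recognizing that the resulting closed-form expression for $\sigma^2$ — which is exactly the formula displayed in Theorem \ref{ThcombCLT1} — is a sum that vanishes only when a certain quadratic form in the "exponent vectors" $(n, m-1, k)$ is degenerate, i.e. when those vectors span a space of dimension $\le 2$. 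Making the passage from "$\sigma^2 = 0$" to "the support lies in a hyperplane" precise (rather than merely "the covariance of some exponent-valued random variable vanishes") is where the real work lies; the probabilistic interpretation of $\sigma^2$ as an asymptotic variance of an additive lattice statistic, together with the standard fact that a lattice random variable has zero variance iff it is a.s. constant, is the cleanest route and is the one I would pursue.
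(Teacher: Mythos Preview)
Your plan has a genuine gap at the very first reduction, and the paper's proof takes a rather different route that sidesteps exactly the difficulty you flag.

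\textbf{The gap.} You claim that $\sigma^2 = 0$ (i.e.\ $\mu'(1)=0$) forces $\mu(v)$ to be \emph{constant} near $v=1$, justifying this only by the expectation that ``$\mu$ is either strictly monotone or constant.'' There is no such dichotomy available a priori: an analytic function can perfectly well have an isolated critical point. Even if you first argue $\sigma^2(v)=v\mu'(v)\ge 0$ for all real $v>0$ near $1$ (so $\mu$ is non-decreasing), $\mu'(1)=0$ still does not force $\mu'\equiv 0$. Without this step, your downstream plan --- deducing that the monomial support of $F$ lies in an affine hyperplane in the $(n,m-1,k)$ lattice --- never gets off the ground. (You correctly identify the hyperplane step as the ``main obstacle,'' but the obstacle actually appears one step earlier.) Your closing probabilistic remark also conflates two different variances: $\sigma^2=0$ says only that $\mathbb{V}\mathrm{ar}\,X_n = o(n)$, not that any lattice random variable is a.s.\ constant.

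\textbf{What the paper actually does.} The paper works on the circle $v=e^{it}$ rather than on the real line. It first shows the general inequality $|\rho(e^{it})|\ge \rho(1)$, and then --- this is precisely where the $3\times 3$ determinant hypothesis enters --- shows that equality $|\rho(e^{it})|=\rho(1)$ would force, via $F_y$, the relations $z^{n_j}y^{m_j-1}v^{k_j}=z_0^{n_j}y_0^{m_j-1}$, i.e.\ a linear system $n_j r+(m_j-1)s+k_j t\in 2\pi\mathbb{Z}$ in the arguments $r,s,t$. Nonvanishing of the determinant forces $t$ to be a rational multiple of $2\pi$, so for small $t\ne 0$ one has the \emph{strict} inequality $|\rho(e^{it})|>\rho(1)$. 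This strict inequality guarantees that the Taylor expansion $g(t)=\log\rho(e^t)=\sum_j\kappa_j t^j/j!$ is not affine; if $\kappa_2=0$, there is a smallest $\ell_0\ge 3$ with $\kappa_{\ell_0}\ne 0$, and then the quasi-power asymptotics $\mathbb{E}\,e^{tX_n}\sim(\rho(e^t)/\rho(1))^{-n}$ yields a limit ``random variable'' for $(X_n-\mu n)n^{-1/\ell_0}$ whose Laplace transform is $e^{-\kappa_{\ell_0}t^{\ell_0}/\ell_0!}$ --- impossible, since no random variable has zero variance but nonzero $\ell_0$-th cumulant.

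So the two missing ingredients in your plan are (i) the passage to complex $v=e^{it}$ and the equality-case analysis of $F_y$, which is where the determinant is actually used, and (ii) the cumulant/Laplace-transform contradiction, which replaces your unproven ``$\mu$ constant'' step.
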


\begin{proof}
Let $x = \rho(v)$ be the solution of the singular system
$y = F(y;z,v)$, $1 = F_y(y;z,v)$. We will first show that
$|\rho(e^{it})| > \rho(1)$ for real $t\ne 0$ that are sufficiently
small. This property will be then used to prove that $\sigma^2 > 0$.

First it is clear that $|\rho(e^{it})| \ge \rho(1)$ for all
real $t$ for which $\rho(e^{it})$ exists. For, if
$|\rho(e^{it})| < \rho(1)$ then we would have
$|y(\rho(e^{it}, e^{it})| < y(\rho(1),1)$ and also
\begin{equation}\label{eqFyest}
|F_y(y(\rho(e^{it}, e^{it}); \rho(e^{it}),e^{it})| <
F_y(y(\rho(1),1);\rho(1),1) = 1
\end{equation}
which is a contradiction. (Note that we have used here the
assumptions $F_{yy}\ne 0$ and $F_v \ne 0$.)

Now assume that $|\rho(e^{it})| \le \rho(1)$ for some real number $t$.
 Then an inequality similar to (\ref{eqFyest}) implies
that for all $n,m,k$
\[
m a_{n,m,k} z^{n}y^{m-1}v^k = m a_{n,m,k} z_0^{n}y_0^{m-1},
\]
where we used the abbreviations $z_0 = \rho(1)$,
$y_0 = y(\rho(1),1)$, $z = \rho(e^{it})$, $y = y(\rho(e^{it}, e^{it})$, and
$v = e^{it}$. In particular, it follows that
$z^{n_j}y^{m_j-1}v^{k_j} = z_0^{n_j}y_0^{m_j-1}$ for $j=1,2,3$.
Hence, if we set $z = z_0e^{ir}$, $y = y_0e^{is}$ (and $v = e^{it}$)
it follows that
\[
n_j r + (m_j-1) s + k_j t = 2\pi l_j \qquad (j = 1,2,3)
\]
for some integers $l_j$, $j =1,2,3$. This is a regular system and
implies that there is a (unique) solution of the form
$r = 2\pi L_1/M$, $s = 2\pi L_2/M$, $t = 2\pi L_3/M$ (for integers
$L_1,L_2,L_3,M$). Hence, if $t\ne 0$ is sufficiently close to $0$
then $|\rho(e^{it})| > \rho(1)$.

Next consider the Taylor series of the function
\[
g(t) = \log \rho (e^t) = \sum_{j=0}^\infty \frac{\kappa_j}{j!} t^j.
\]
By definition we have $\kappa_0 = \log \rho(1)$, $\kappa_1 = \mu$, and
$\kappa_2 = \sigma^2$. Note that this representation and
the general property $|\rho(e^{it})| \ge \rho(1)$ implies that
$\kappa_2 \ge 0$. Suppose that $\kappa_2 = 0$ and let $\ell_0\ge 3$ be
the smallest integer with $\kappa_{\ell_0} \ne 0$.

We use now the fact that the assumptions of Theorem~\ref{ThcombCLT1}
imply that
\[
\mathbb{E}\, e^{tX_n} = \left( \frac{\rho(e^t)}{\rho(1)} \right)^{-n}
\left(1 + O\left( n^{-1} \right) \right).
\]
(This follows from the singular expansion of the solution $y(z,v)$ and
the asymptotic transfer results -- compare it with (\ref{eqanexp}) from above).
Hence, by using the Taylor expansion of $g(t)$ it follows that
\[
\mathbb{E}\, e^{t(X_n-\mu n)n^{-1/\ell_0}} =
e^{-\kappa_{\ell_0} t^{\ell_0}/\ell_0!} + O\left( n^{-1/\ell_0} \right).
\]
This means that the sequence of random variables
$Y_n = (X_n-\mu n)n^{-1/\ell_0}$ converges weakly (and we have convergence
of all moments) to a random variable $Y$ with Laplace transform
$\mathbb{E}\ e^{tY} = e^{-\kappa_{\ell_0} t^{\ell_0}/\ell_0!}$.
However, such a random variable that has a non-zero $\ell_0$-th moment
but zero variance does not exist. Hence, we finally have proved
$\sigma^2 > 0$.
\end{proof}

A slight variation of the above proof shows that if there
are three triples $(n_j,m_j-1,k_j)$, $j = 1,2,3$, with
determinant $\pm 1$ then $|\rho(e^{it})| > \rho(1)$ for
all $t \not \in 2\pi \mathbb{Z}$. This shows that $z_0 = \rho(1)$
is the only singularity of $y(z,v)$ for $|z| = z_0$ and $|v| = 1$.
This assumption can be used to obtain bivariate asymptotic of
the form (\ref{eqLetransfer3}).

\section{Subcritical graph classes: the labelled case}\label{sec:sub_labelled}
In this section  $\cG$ denotes always a block-stable class  of labelled graphs and $\cC$ (resp. $\cB$) its subclass consisting of connected (resp. $2$-connected) graphs.

\subsection{Definition of subcriticality}
Recall from Section~\ref{subsec:block_stable} that the EGFs of the block-stable class satisfy
$$\cG(z)=\exp(\cC(z)),\, \cCp(z)=z\exp(\cB'(\cCp(z))).$$
Given a series $g(y)$  it is easy to show that there is a unique series $f(z)$ that is a
solution of the equation
\beq\label{eq:fg}
f(z)=z\exp(g(f(z))).
\eeq
In addition $f(z)$ has nonnegative coefficients if $g(y)$ has nonnegative coefficients. Note that for a block-stable graph class, the solution of~\eqref{eq:fg}
is $f(z)=\cCp(z)$ when $g(y)$ is taken as $\cB'(y)$.

\begin{definition}
Let $g(y)$ be a series with non-negative coefficients such that $g(0)=0$. Let $f(z)$ be the unique
solution of~\eqref{eq:fg}. Let $\rho$ and $\eta$ be the radii of convergence of $z\mapsto f(z)$
and $y\mapsto g(y)$. Then the pair $(f(z), g(y))$ is called subcritical if \mbox{$f(\rho)<\eta$}.

A block-stable graph class $\cG$ with $\cC$ the connected subclass and $\cB$ the 2-connected subclass is called \emph{subcritical}
 if the pair $(\cCp(z),\cB'(y))$ is subcritical.
\end{definition}

Note that the singularity system for~\eqref{eq:fg} is
$$
y=z\exp(g(y)),\,  y\ \!g'(y)=1.
$$
In particular, for the pair $(\cCp(z),\cB'(y))$ the latter equation
rewrites to $y \cB''(y) = 1$. Hence, we have subcritiallity if
and only if $\eta \cB''(\eta) > 1$, compare with \cite{BePaSt09}.

\medskip

In what follows we will also consider functions  $f(z,v)$, $g(y,v)$ with
an additional {\it parameter $v$}. For example, suppose that we are
dealing with a bivariate generating function where the exponent of $v$ counts
the number of edges and the exponent of $z$ (or $y$) counts the
number of vertices. Suppose further that we already know that
the pair $(f(z,1), g(y,1))$ is subcritical. What can we say then
for the pair  $(f(z,v), g(y,v))$ if $v$ is sufficiently close to $1$?
Is there some {\it stability} of the subcriticallity?
Actually there is if the parameter that is counted by the
exponent of $v$ has a linear worst case behaviour in the
exponent $n$ of $z$ (or $y$). The essential consequence of the following
lemma is that radius of convergence
$z\mapsto f(z,v)$ and $y\mapsto g(y,v)$, respectively,
is continuous at $v= 1$. Hence we have
$f(\rho(v),v)<\eta(v)$ if $v$ is real and sufficiently close to $1$\footnote
{Note that it is sufficient to consider
real $v$ if we are just interested into (global) central
limit theorem and asymptotic results for moments.
Namely, in order to prove a theorem of the type of Theorem~\ref{ThcombCLT1}
one can work with the help of the Laplace transform $\mathbb{E}\, e^{tX_n}$
that is encoded by $\mathbb{E}\, e^{tX_n} = [z^n] A(z,e^t)/[z^n] A(z,1)$
when $\mathbb{P}(X_n = k) = a_{n,k}/a_n$.}.

\begin{lem}
Let $A(z,v) = \sum_{n,k\ge 0} a_{n,k} z^n v^k$ be a
power series with non-negative coefficients $a_{n,k}$ with the
property that $a_{n,k} = 0$ for $k > C n$ for some constant $C> 0$.
Let $R(v)$ denote the radius of convergence of the mapping
$z\mapsto A(z,v)$. Then we have for real $v>0$
\[
R(1) \min\{ 1, v^{-C} \} \le R(v) \le R(1) \max\{ 1, v^C\}.
\]
\end{lem}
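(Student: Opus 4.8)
The plan is to compare the sections $z\mapsto A(z,v)$ and $z\mapsto A(z,1)$ coefficient by coefficient, exploiting the hypothesis $a_{n,k}=0$ for $k>Cn$ to bound each coefficient $\sum_k a_{n,k}v^k$ against $\sum_k a_{n,k}$ times a power of $v$. First I would fix a real $v>0$ and write $A(z,v)=\sum_n b_n(v) z^n$ with $b_n(v)=\sum_{k} a_{n,k}v^k$. If $v\ge 1$, then since every $k$ with $a_{n,k}\ne 0$ satisfies $0\le k\le Cn$, we have $v^k\le v^{Cn}$, hence $b_n(v)\le v^{Cn} b_n(1)$; if $v\le 1$ the same reasoning with the inequality reversed gives $b_n(v)\le b_n(1)$ (and more precisely $v^{Cn}b_n(1)\le b_n(v)\le b_n(1)$). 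In all cases one gets the two-sided bound
\[
\min\{1,v^{C}\}^n\, b_n(1)\ \le\ b_n(v)\ \le\ \max\{1,v^{C}\}^n\, b_n(1).
\]

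Next I would translate these coefficientwise bounds into bounds on radii of convergence via the Cauchy--Hadamard formula $R(v)^{-1}=\limsup_n b_n(v)^{1/n}$. Taking $n$-th roots in the displayed inequality and passing to the $\limsup$ gives
\[
\min\{1,v^{C}\}\,R(1)^{-1}\ \le\ R(v)^{-1}\ \le\ \max\{1,v^{C}\}\,R(1)^{-1},
\]
which upon inverting (and noting $\min\{1,v^{C}\}^{-1}=\max\{1,v^{-C}\}$ and symmetrically) yields exactly
\[
R(1)\min\{1,v^{-C}\}\ \le\ R(v)\ \le\ R(1)\max\{1,v^{C}\}.
\]
One should note the degenerate cases: if $A(z,1)$ is entire then $R(1)=\infty$ and the lower bound is vacuous while the upper bound forces $R(v)=\infty$ too when $v\le 1$; if $R(1)=0$ the statement is trivial. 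These are handled by the same $\limsup$ computation with the usual conventions.

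There is essentially no hard step here; the argument is a short and routine manipulation. The only point requiring a little care is making sure the finiteness-support hypothesis $a_{n,k}=0$ for $k>Cn$ is used correctly at the level of each fixed $n$ (it bounds the range of $k$ uniformly in a way that is linear in $n$, which is precisely what makes the $n$-th root converge to a power of $v$ rather than degrading) and that one does not accidentally need $C$ to be an integer — the bounds $v^{\lfloor Cn\rfloor}$ and $v^{Cn}$ differ by a bounded factor that disappears under $\limsup_n(\cdot)^{1/n}$, so writing $v^{Cn}$ throughout is harmless. This completes the proof.
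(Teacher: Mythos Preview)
Your approach is exactly the paper's: bound the coefficients $b_n(v)=\sum_k a_{n,k}v^k$ above and below by $b_n(1)$ times a power of $v$, then pass to radii of convergence. The paper's proof is even terser than yours---it writes the single chain $\sum_k a_{n,k}\le\sum_k a_{n,k}v^k\le v^{Cn}\sum_k a_{n,k}$ for $v\ge 1$, reads off $R(v)\ge R(1)v^{-C}$, and says ``similarly we argue for $0<v\le 1$''.

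One arithmetic slip to flag: in your final inversion you correctly note that $\min\{1,v^C\}^{-1}=\max\{1,v^{-C}\}$, so from $\min\{1,v^C\}R(1)^{-1}\le R(v)^{-1}$ you should conclude $R(v)\le R(1)\max\{1,v^{-C}\}$, not $R(1)\max\{1,v^C\}$. In fact the lemma as printed carries the same typo: the upper bound $R(v)\le R(1)\max\{1,v^C\}$ is false in general (take $A(z,v)=\sum_n z^nv^n=1/(1-zv)$ with $C=1$ and $v=1/2$: then $R(v)=2$ but $R(1)\max\{1,v^C\}=1$). The bound your argument actually establishes is $R(v)\le R(1)\max\{1,v^{-C}\}$, which together with the lower bound still gives the continuity of $R$ at $v=1$ that the paper needs.
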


\begin{proof}
If $v\ge 1$ then
\[
\sum_k a_{n,k} \le  \sum_{k} a_{n,k} v^k \le
\left( \sum_k a_{n,k} \right) v^{Cn}
\]
and consequently $R(v) \ge R(1) v^{-C}$.
Similarly we argue for $0< v \le 1$.
\end{proof}

\subsection{Asymptotic estimate for a subcritical graph classes}

We start with a quick analysis of subcritical graph classes and derive
their asymptotic number, compare it with~\cite{BePaSt09}.
\begin{lem}
Let $\cG$ be a labelled subcritical block-stable graph class with $\cC$ the connected subclass
and $\cB$ the 2-connected subclass. Then $\cCp(z)$ has a square-root singular expansion around its radius of convergence $\rho$. Furthermore, if
$[z^n]\, \cCp(z)>0$ for $n\ge n_0$ then $\rho$ is the only singularity
on the circle $|z| = r$ and $\cCp(z)$ can be continued analytically
to the region $D = \{ z\in \mathbb{C} : |z| < \rho+\epsilon,\
1 - z/\rho \not\in \mathbb{R}^-\}$ for some $\epsilon> 0$.
\end{lem}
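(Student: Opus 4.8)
The plan is to apply Theorem~\ref{Thsystem1} to the single functional equation $\cCp(z) = z\exp(\cB'(\cCp(z)))$, which is an instance of $y = F(y;z)$ with $F(y;z) = z\exp(\cB'(y))$ and no secondary variables. Since there are no $\vv$-variables, every statement about admissible valuations is vacuous, and what needs checking is that $F$ is a positive, strongly recursive, analytically well-founded system (trivially strongly recursive, since $r=1$). First I would verify the structural conditions of Definition~\ref{Def3}: positivity of coefficients and $F(0;z) = 0 \neq \mathbf{0}$ as a power series follow because $\cB'(y)$ has nonnegative coefficients and $\cB'(0) = 0$ (blocks have at least two vertices, so $\cB(y)$ starts at $y^2/2$); non-affineness in $y$ and dependence on $z$ hold since $F_{yy} = z\,(\cB'''(y) + \cB''(y)^2)\exp(\cB'(y)) \neq 0$ (as $\cB$ contains the link-graph, $\cB''(y)$ is not identically zero) and $F_z = \exp(\cB'(y)) \neq 0$.

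The crux is condition (3) of Definition~\ref{Def3}: the existence of $z_0 > 0$ and $y_0 > 0$ at an inner point of the domain of convergence of $F$ solving the singularity system $\{y = z\exp(\cB'(y)),\ 1 = F_y = z\cB''(y)\exp(\cB'(y))\}$, equivalently (dividing the two equations) $y\cB''(y) = 1$ together with $y = z\exp(\cB'(y))$. This is exactly where subcriticality enters. The function $y \mapsto y\cB''(y)$ is increasing on $[0,\eta)$, vanishes at $0$, and by the subcriticality hypothesis (rephrased in the Remark after the definition of subcriticality as $\eta\cB''(\eta) > 1$) it exceeds $1$ before $y$ reaches $\eta$; hence there is a unique $y_0 \in (0,\eta)$ with $y_0\cB''(y_0) = 1$. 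Setting $z_0 = y_0\exp(-\cB'(y_0))$ gives a valid positive pair solving (E) and (S), and because $y_0 < \eta$, the point $(y_0;z_0)$ is genuinely interior to the region of convergence of $F(y;z) = z\exp(\cB'(y))$ — this is the whole point of the strict inequality $f(\rho) < \eta$ in the definition of subcriticality. Since $r = 1$, the Jacobian is the scalar $F_y$, which equals $1$ at $(y_0;z_0)$, so all eigenvalues have modulus $\le 1$; moreover $\lambda_{\max}(\JacF) = 1$ automatically. One also needs $z_0$ to be the radius of convergence of $\cCp$, which follows from the standard fact (noted in Section~\ref{sec:systemequations}) that for a positive strongly recursive system with $F$ not affine in $y$, the power series solution is finite at and singular precisely at the point produced by the singularity system; alternatively, by uniqueness of the solution to~\eqref{eq:fg}, $\cCp$ coincides with that power series solution, whose radius of convergence is $z_0 = \rho$.

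Theorem~\ref{Thsystem1} then delivers directly that $\cCp(z)$ has a square-root singular expansion around $\rho$, proving the first assertion. For the second assertion, the positivity hypothesis $[z^n]\,\cCp(z) > 0$ for $n \ge n_0$ is precisely the extra hypothesis in the second half of Theorem~\ref{Thsystem1} (``if $[z^n]\,y_1(z,\vvo) > 0$ for all $n \ge n_0$''), which yields that $\rho$ is the unique singularity on $|z| = \rho$ and that $\cCp$ continues analytically to $D = \{z : |z| < \rho + \epsilon,\ 1 - z/\rho \notin \mathbb{R}^-\}$. (The statement of the lemma has a harmless typo, writing $|z| = r$ for $|z| = \rho$.)

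I expect the only real obstacle to be a careful justification of condition (3), i.e.\ making precise that subcriticality ($f(\rho) = \cCp(\rho) < \eta$) is equivalent to the existence of the interior singular point $y_0 < \eta$ with $y_0\cB''(y_0) = 1$; everything else is a matter of matching the hypotheses of the already-quoted Theorem~\ref{Thsystem1} to this particular single equation. A secondary point worth a sentence is the reduction from the abstract ``$f(\rho) < \eta$'' to the computational criterion $\eta\cB''(\eta) > 1$, which is handled by monotonicity of $y\cB''(y)$ on $[0,\eta)$ exactly as in the Remark following the definition of subcriticality.
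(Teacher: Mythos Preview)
Your approach is essentially the same as the paper's: apply Theorem~\ref{Thsystem1} to the single equation $y = F(y;z) = z\exp(\cB'(y))$. The paper's proof is much terser than yours --- it simply observes that subcriticality means $\tau := \cCp(\rho) < \eta$, hence $F$ is analytic at $(\tau,\rho)$, and invokes Theorem~\ref{Thsystem1} directly. You instead pass through the equivalent criterion $\eta\cB''(\eta) > 1$ to solve the singularity system abstractly for $y_0 \in (0,\eta)$ and then identify $(y_0,z_0)$ with $(\tau,\rho)$; this is correct but slightly roundabout compared to the paper's direct use of the definition $f(\rho) < \eta$. One small slip: you write ``$F(0;z) = 0 \neq \mathbf{0}$ as a power series'', but in fact $F(0;z) = z\exp(\cB'(0)) = z$, which is the nonzero power series you need for condition~(1) of Definition~\ref{Def3}.
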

\begin{proof}
The function $y=\cCp(z)$ is a solution of
$$
y=F(y;z),\, \mathrm{with}\,\, F(y;z)=z\exp(\cB'(y)).
$$
Let $\rho$ and $\eta$ be respectively the radii of convergence of $\cCp(z)$
and of $\cB'(y)$, and let $\tau:=\cCp(\rho)$.
Since $\cG$ is subcritical, we have $\tau<\eta$, hence $F(y;z)$ is analytic
at $(\tau,\rho)$. We conclude from Theorem~\ref{Thsystem1} that $\cCp(z)$ has a
square-root expansion at $\rho$ and that $\cCp(z)$ can be continued analytically
to $D$.
\end{proof}
\begin{thm}
Let $\cG$ be a subcritical block-stable graph class with
the property that $[z^n]\, \cCp(z)>0$ for $n\ge n_0$. Then
there exist constants $\gamma\geq e \approx 2.71828$
and $c>0$ such that
\beq\label{eq:estim_Gn}
[z^n]\cG(z)=c\ \!n^{-5/2}\ \!\gamma^n(1+o(1)) \ \ \mathrm{as}\ n\to\infty.
\eeq
\end{thm}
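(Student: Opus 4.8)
The plan is to first analyze the singular behaviour of $\cCp(z)$ and then transfer it through the composition $\cG(z) = \exp(\cC(z))$, where $\cC(z) = \int_0^z \cCp(x)/x\, \mathrm{d}x$ (recall $\cCp = z\cC'$). By the preceding lemma, $\cCp(z)$ has a square-root singular expansion around its radius of convergence $\rho$, of the form $\cCp(z) = a(z) - b(z)\sqrt{1-z/\rho}$ with $a,b$ analytic at $\rho$ and $b(\rho) > 0$, and (using the hypothesis $[z^n]\cCp(z) > 0$ for $n \ge n_0$) it continues analytically to a $\Delta$-region at $\rho$. First I would record that $\tau := \cCp(\rho)$ is finite and equals $a(\rho)$.

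Next I would obtain the singular expansion of $\cC(z)$ itself. Since $\cC'(z) = \cCp(z)/z$ has a square-root expansion at $\rho$ (dividing by $z$ only changes the analytic prefactors, as $\rho > 0$), the integration remark from Section~\ref{sec:singularexpansion} gives that $\cC(z) = \int_0^z \cCp(x)/x\, \mathrm{d}x$ has a singular expansion of order $3/2$ at $\rho$:
\[
\cC(z) = c_0 + c_1(1-z/\rho) + c_2 (1-z/\rho)^{3/2} + \cdots,
\]
with $c_2 \ne 0$ (indeed $c_2 = \tfrac{2}{3}b(\rho)/\rho \cdot \rho \ne 0$, up to sign), and this expansion is uniform in a $\Delta$-region since $\cCp$ continues there. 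Now $\cG(z) = \exp(\cC(z))$ is an analytic function composed with $\cC(z)$; since $\exp$ is entire, $\cG(z)$ inherits the $\Delta$-region analytic continuation, and its singular expansion at $\rho$ is obtained by substituting the expansion of $\cC$ into the Taylor series of $\exp$ around $c_0$. The key point is that the leading singular term survives: writing $\cC(z) = \cC(\rho) + (\text{analytic}) + c_2(1-z/\rho)^{3/2} + \cdots$, one gets $\cG(z) = e^{\cC(\rho)}\big(1 + c_2(1-z/\rho)^{3/2} + \cdots\big) \cdot (\text{analytic part})$, so $\cG(z)$ has a singular expansion of order $3/2$ at $\rho$ with a nonzero coefficient on $(1-z/\rho)^{3/2}$, namely $C := e^{\cC(\rho)} c_2 \ne 0$.

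Then I would apply the transfer theorem \eqref{eqanexp} with $\alpha = 3/2$ (a non-integer, so $\Gamma(-3/2) = \tfrac{4}{3}\sqrt{\pi} \ne 0$ is harmless), which yields
\[
[z^n]\cG(z) = C \frac{n^{-5/2}}{\Gamma(-3/2)}\rho^{-n}\big(1 + o(1)\big) = c\, n^{-5/2}\gamma^n(1+o(1))
\]
with $\gamma = \rho^{-1}$ and $c = C/\Gamma(-3/2)$; since $[z^n]\cG(z) > 0$ and $n^{-5/2}\gamma^n > 0$, necessarily $c > 0$. Finally, for the bound $\gamma \ge e$: the class $\cG$ contains all graphs with no edges (the empty graph on each label set is block-stable-trivially in $\cG$, or one uses that $\cG$ contains the link graph and is block-stable so contains every edgeless graph), hence $|\cG_n| \ge 1$, giving $[z^n]\cG(z) \ge 1/n!$; comparing with $c\, n^{-5/2}\gamma^n$ and using $n! \sim \sqrt{2\pi n}(n/e)^n$ forces $\gamma \ge e$. \textbf{Main obstacle.} The routine-but-careful part is verifying that the $3/2$-singular term genuinely does not cancel under $\exp$ and under the division by $z$ and integration — i.e., that $C \ne 0$ — and checking that all expansions remain uniform in a common $\Delta$-region so the transfer theorem applies; the analytic-continuation bookkeeping (intersecting the $\Delta$-regions, confirming $\rho$ is the unique dominant singularity of $\cG$) is where the hypothesis $[z^n]\cCp(z) > 0$ is really used.
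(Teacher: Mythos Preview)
Your main argument is essentially identical to the paper's: integrate the square-root expansion of $\cCp(z)/z$ to get an order-$3/2$ expansion for $\cC(z)$, compose with $\exp$ to get an order-$3/2$ expansion for $\cG(z)$, and apply the transfer theorem. Your additional care about non-cancellation of the singular coefficient and about the $\Delta$-region bookkeeping is fine and more explicit than the paper, which simply asserts these steps.

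However, your justification of $\gamma\ge e$ is wrong. From $|\cG_n|\ge 1$ you get $[z^n]\cG(z)\ge 1/n!\sim e^n n^{-n-1/2}/\sqrt{2\pi}$, and comparing this with $c\,n^{-5/2}\gamma^n$ gives no constraint on $\gamma$ whatsoever, since $n^{-n}$ kills everything. The correct argument uses that a block-stable class contains the link-graph $\ell$, hence contains every forest (all blocks of a forest are single edges); by Cayley's formula $|\cG_n|\ge n^{n-2}$, so $[z^n]\cG(z)\ge n^{n-2}/n!\sim e^n n^{-5/2}/\sqrt{2\pi}$, and \emph{this} comparison forces $\gamma\ge e$. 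Equivalently, the EGF of labelled forests has radius of convergence $1/e$, so $\rho\le 1/e$. (The paper's own proof is silent on this point, but your attempted argument is not merely incomplete --- it is incorrect.)
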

\begin{proof}
The function $\cC(z)$ satisfies
$$\cC(z)=\int_0^z\cCp(t)\frac{\mathrm{d}t}{t},$$
hence $\cC(z)$ has a singular expansion of order $3/2$ at $\rho$.
Since $\cG(z)=\exp(\cC(z))$ and $\exp$ is analytic everywhere (in particular at $\cC(\rho)$),
we conclude that $\cG(z)$ has also a singular expansion of order $3/2$ at $\rho$.
The transfer theorems of singularity analysis (see Section~\ref{sec:transfer}) yield an estimate of the form~\eqref{eq:estim_Gn},
where $\gamma=1/\rho$. See also~\cite{BePaSt09}.
\end{proof}
\subsection{Sufficient condition for subcriticality}
The following lemma gives a simple sufficient condition for subcriticality:
\begin{lem}\label{lem:criterion_sub}
Let $g(y)$ be a series with non-negative coefficients and positive radius of convergence $\eta$
such that $g'(y)\to\infty$ as $y\to \eta^-$. Let $f(z)$ be the unique solution of~\eqref{eq:fg}.
Then the pair $(f(z),g(y))$ is subcritical.
\end{lem}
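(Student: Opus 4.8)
The plan is to show that the subcriticality condition $f(\rho) < \eta$ must hold, by ruling out the only alternative, namely $f(\rho) = \eta$ (note $f(\rho) > \eta$ is impossible since $g$ is only defined as a convergent series up to radius $\eta$, and $g(f(z))$ appears in~\eqref{eq:fg}). Recall from the discussion after the definition of subcriticality that the singularity system for~\eqref{eq:fg} is $y = z\exp(g(y))$ together with $y\, g'(y) = 1$. The key analytic input is Theorem~\ref{Thsystem1} (in the single-equation form) applied to $F(y;z) = z\exp(g(y))$: as long as $(\tau,\rho)$ with $\tau = f(\rho)$ is an interior point of the domain of convergence of $F$ — which is exactly the condition $\tau < \eta$ — the solution $f(z)$ has a square-root singularity at $\rho$, and at that point the singularity equation $1 = F_y(\tau;\rho) = \rho\, g'(\tau)\exp(g(\tau)) = \tau\, g'(\tau)$ holds. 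So the real content is to exclude $\tau = \eta$.

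First I would argue that the radius of convergence $\rho$ of $f$ is finite and $\tau := f(\rho) = \lim_{z\to\rho^-} f(z)$ is finite and positive: finiteness of $\rho$ follows because $f$ has non-negative coefficients and is not polynomial (since $g(0)=0$ but $g\not\equiv 0$ forces $f$ to grow), and finiteness of $\tau$ follows because $F(y;z) = z\exp(g(y))$ is nonlinear in $y$, so $f$ cannot blow up at its singularity — this is precisely the remark in the excerpt that $\vy(z_0)$ is finite when $\vF$ is not affine. Then I would suppose for contradiction that $\tau = \eta$. Since $g$ has non-negative coefficients, $g(y)$ increases to $g(\eta^-) \in (0,\infty]$ as $y\to\eta^-$, and the hypothesis gives $g'(y)\to\infty$. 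Now evaluate the singularity relation: on $(0,\rho)$ we have $f(z) = z\exp(g(f(z)))$, and differentiating, $f'(z)(1 - z g'(f(z)) f'(z)/f(z)\cdot f(z)) = \dots$; more cleanly, the standard computation gives
\[
f'(z) = \frac{f(z)/z}{1 - f(z)\, g'(f(z))}.
\]
As $z\to\rho^-$, $f(z)\to\tau = \eta$, so $g'(f(z))\to\infty$, hence $f(z)g'(f(z))\to\infty$, so the denominator $1 - f(z)g'(f(z))$ becomes negative (and large in absolute value) as $z\to\rho^-$. But $f'(z) > 0$ on $(0,\rho)$ since $f$ has non-negative coefficients and is not eventually constant, while $f(z)/z > 0$; this forces $1 - f(z)g'(f(z)) > 0$ throughout, contradicting the fact that it tends to $-\infty$. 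Hence $\tau < \eta$, which is subcriticality.

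The main obstacle I anticipate is making the chain "$\tau = \eta \Rightarrow$ denominator changes sign" fully rigorous: one must ensure $f$ actually attains values close to $\eta$ on $(0,\rho)$ (i.e. $f$ does not stay bounded away from $\eta$ while still being singular at $\rho$), and that $g'(f(z))\to\infty$ genuinely as $z\to\rho^-$. The first point is handled by the finiteness-of-$\tau$ argument above combined with the monotonicity of $f$: $f$ increases continuously on $[0,\rho)$ to its limit $\tau$, so if $\tau = \eta$ then $f(z)\to\eta^-$. For the second, since $y\mapsto g'(y)$ is non-decreasing (non-negative coefficients) and $g'(y)\to\infty$ as $y\to\eta^-$, and $f(z)\to\eta^-$, monotonicity of $g'$ gives $g'(f(z))\to\infty$. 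The only remaining subtlety is justifying $f'(z) = (f(z)/z)/(1 - f(z)g'(f(z)))$ on all of $(0,\rho)$ and that the denominator is strictly positive there — this follows by implicit differentiation of~\eqref{eq:fg} wherever $f$ is analytic, i.e. on $(0,\rho)$, together with $f'>0$, $f/z>0$. Alternatively, and perhaps more transparently, I would note that $1 - f(z)g'(f(z))$ is continuous on $[0,\rho)$, equals $1>0$ at $z=0$, and cannot vanish on $(0,\rho)$ since $f$ is analytic (non-singular) there while vanishing of the denominator is exactly the singularity condition $(S)$; by the intermediate value theorem it stays positive, contradicting its $-\infty$ limit. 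This wraps up the proof.
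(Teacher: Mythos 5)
Your proof is correct and follows essentially the same route as the paper's: rule out $\tau>\eta$ by noting that $g\circ f$ would force a singularity of $f$ inside its disk, then rule out $\tau=\eta$ by implicitly differentiating \eqref{eq:fg} to get $f'(z)=\tfrac{f(z)/z}{1-f(z)g'(f(z))}$ and observing the sign/magnitude of the denominator is incompatible with $g'(f(z))\to\infty$. The paper phrases the second step as the explicit bound $g'(y)\le 1/y$ on $(0,\eta)$ rather than tracking the denominator's sign, but it is the same calculation.
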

\begin{proof}
Let $\rho$ be the radius of convergence of $f(z)$ and $\tau:=f(\rho)$.
From the definition of the subcriticality we need to show $\tau<\eta$.

Assume $\tau>\eta$. Then, by continuity of $f(z)$, there exists $0<z_0<\rho$
such that $f(z_0)=\eta$. Since $f(z)$ is regular at $z_0$ with positive derivative
and since $g(y)$ is singular at $\eta$, the function $g(f(z))$ must be singular at $z_0$.
Hence $z\ \!\exp(g(f(z)))$ must also be singular at $z_0$, in contradiction to the
fact that $f(z)$ is regular at $z_0$. Hence $\tau\leq \eta$.

Assume now that $\tau=\eta$. Differentiating~\eqref{eq:fg}, we obtain
$$
f'(z)=\frac{f(z)}{z}+g'(f(z))\ \!f'(z)f(z),
$$
which implies $f'(z)\geq g'(f(z))f'(z)f(z)$, for $z\in(0,\rho)$. Taking $y=f(z)$, it simplifies to
$g'(y)\leq 1/y$ for $y\in(0,\eta)$. This contradicts the fact that $g'(y)\to\infty$ as $y\to \eta$.
\end{proof}
Note that the pair $(f(z),g(y))$ is subcritical if $g(y)$ has a
square-root singular expansion at $\eta$. Therefore, a block-stable graph class $\cG$ is subcritical
if the EGF of $\cB'$ admits a square-root singular expansion.

\section{Subcritical classes: the unlabelled case}\label{sec:sub_unlabelled}
In this section $\tcG$ denotes a block-stable class  of unlabelled graphs and $\tcC$ (resp. $\tcB$) its subclass consisting of connected (resp. $2$-connected) unlabelled graphs in $\tcG$.

\subsection{Definition of subcriticality in the unlabelled case}
We have seen in Section~\ref{subsec:block_stable} that a block-stable class satisfies
$$
\tcG(z)=\exp\left(\sum\nolimits_{i\geq 1}\tfrac1{i}\tcC(z^i)\right),
$$
$$
\tcCp(z)=z\ \!\exp\left(\sum\nolimits_{i\geq 1}\tfrac1{i}Z_{\cB'}(\tcCp(z^i),\tcCp(z^{2i}),\tcCp(z^{3i}),\ldots)\right).
$$
The second equation can be rewritten as follows:
\beq\label{eq:fgA}
f(z)=z\ \!\exp\!\big(g(f(z),z)+A(z)\big),
\eeq
where
\begin{eqnarray}
f(z)&:=&\tcCp(z) \label{eq:def_f}\\
g(y,z)&:=&Z_{\cB'}(y,f(z^2),f(z^3),\ldots)\label{eq:def_g}\\
A(z)&:=&\sum\nolimits_{i\geq 2}\frac1{i}Z_{\cB'}(f(z^i),f(z^{2i}),\ldots)\label{eq:def_A}.
\end{eqnarray}
Note that, given a bivariate series $g(y,z)$ and a univariate series $A(z)$,
there is a unique series $f(z)$ that is a solution of~\eqref{eq:fgA} (because
the coefficients of $f(z)$ are determined uniquely iteratively) and $f(z)$ has
nonnegative coefficients if $g(y,z)$ and $A(z)$ have nonnegative coefficients.
\begin{definition}\label{def:sub_unl}
Let $g(y,z),A(z)$ be series with nonnegative coefficients, and let $f(z)$ be
the unique solution of~\eqref{eq:fgA} and $\rho$ the radius of
convergence of $f(z)$. Then the triple $(f(z),g(y,z),A(z))$ is called subcritical if
\begin{itemize}
\item[(i)] $\rho$ is non-zero,
\item[(ii)] $g(y,z)$ is analytic at $(f(\rho),\rho)$, and
\item[(iii)] the radius of convergence of $A(z)$ is larger than $\rho$.
\end{itemize}
An unlabelled block-stable graph class $\tcG$ is called subcritical if
\begin{itemize}
\item[(a)]
the triple $(f(z),g(y,z),A(z))$ with $f(z)$, $g(y,z)$, $A(z)$ defined as in~\eqref{eq:def_f}--\eqref{eq:def_A} is subcritical, and
\item[(b)]
the radius of convergence of the series $q(z):=Z_{\cC}(0,z^2,z^3,\ldots)$ is strictly larger than $\rho$.
\end{itemize}
\end{definition}
Note that for any block-stable graph class $\tcG$, the class $\tcCp$ of rooted connected
graphs from $\tcG$ dominates coefficient-wise the class of unlabelled rooted non-plane trees, whose coefficients grow exponentially; hence,
 $\rho\leq \rho^*\approx 0.33832$ (with $\rho^*$ the radius of convergence of
 unlabelled forests). Furthermore, we also have stability of
subcriticallity when we vary an additional variable $v$ locally around $1$
if the parameter that is counted by the exponent of $v$ has at most
linear worst case behaviour.

\subsection{Asymptotic estimate for a subcritical class}

\begin{lem}\label{lem:CpOGF}
Let $\tcG$ be an unlabelled subcritical block-stable graph class with $\tcC$ the connected subclass
and $\tcB$ the 2-connected subclass.
Let $\rho$ be the radius of convergence of $f(z):=\tcCp(z)$. Then $f(z)$
has a square-root singular expansion around $\rho$, and $(y,z)=(f(\rho),\rho)$ is a solution of the singular system
$$
y=z\ \!\exp(g(y,z)+A(z)),\ \ 1=y\ \!g_y(y,z),
$$
with $g(y,z)$ and $A(z)$ defined in~\eqref{eq:def_g} and \eqref{eq:def_A}.
Furthermore, if
$[z^n]\, \cCp(z)>0$ for $n\ge n_0$ then $\rho$ is the only singularity
on the circe $|z| = r$ and we $\cCp(z)$ can be continued analytically
to the region $D = \{ z\in \mathbb{C} : |z| < \rho+\epsilon,\
1 - z/\rho \not\in \mathbb{R}^-\}$ for some $\epsilon> 0$.
\end{lem}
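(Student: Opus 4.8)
The plan is to recognize that equation~\eqref{eq:fgA} is a single functional equation of the form $y = F(y;z)$ with $F(y;z) := z\exp(g(y,z)+A(z))$, and then to verify that $F$ satisfies the hypotheses of Theorem~\ref{Thsystem1} at the valuation with no secondary variables. First I would record the easy structural facts: $F(y;z)$ has nonnegative coefficients (since $g$, $A$, and $\exp$ do), $F(0;z) = z\exp(g(0,z)+A(z)) \ne 0$, and $F$ is genuinely nonlinear in $y$ because $Z_{\cB'}$ contains a term of degree $\ge 2$ in its first argument (the link graph gives $\cB' \ni \ell'$, so $g(y,z)$ has a quadratic-or-higher term in $y$). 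The single-equation case is trivially strongly recursive. The real content is checking the analytic-well-foundedness condition~(3) of Definition~\ref{Def3}, i.e.\ exhibiting an inner point $(\tau,\rho)$ of the domain of convergence of $F$ at which $\rm{(E)}$ and $\rm{(S)}$ hold with $\lambda_{\max}(\JacF) = 1$; here $\JacF$ is the scalar $\partial F/\partial y = F(y;z)\,g_y(y,z) = y\,g_y(y,z)$, so $\rm{(S)}$ reads $1 = \tau\,g_y(\tau,\rho)$, matching the singular system in the statement.

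The heart of the argument is to show that the point $(\tau,\rho)$ with $\tau := f(\rho)$ is an inner point of the region of convergence of $F(y;z)$, and this is exactly where subcriticality enters. By hypothesis~(ii) of Definition~\ref{def:sub_unl}, $g(y,z)$ is analytic at $(f(\rho),\rho) = (\tau,\rho)$; by hypothesis~(iii), $A(z)$ is analytic in a disk of radius strictly larger than $\rho$, hence analytic at $z=\rho$; and $\exp$ is entire. Therefore $F(y;z) = z\exp(g(y,z)+A(z))$ is analytic at $(\tau,\rho)$, which (together with nonnegativity of coefficients, so that the Taylor domain is a polydisk) gives that $(\tau,\rho)$ is an inner point of its domain of convergence. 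Next I would argue that $\rho$ is indeed the singularity of $f$ and that the singularity system holds there: since $f$ is the power-series solution of the positive strongly recursive system $\rm{(E)}$ with $F$ analytic beyond $(\tau,\rho)$, the standard argument (as recalled in Section~\ref{sec:systemequations}) shows $f(z) \to \tau < \infty$ as $z \to \rho^-$, that $\rm{(S)}$ must hold at $(\tau,\rho)$ — else the implicit function theorem would continue $f$ past $\rho$, contradicting that $\rho$ is its radius of convergence — and that $\lambda_{\max}(\JacF(\tau;\rho)) = \tau\,g_y(\tau,\rho) = 1$, so all (here, the single) eigenvalues have modulus $\le 1$. This verifies condition~(3).

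With all hypotheses of Theorem~\ref{Thsystem1} in place, that theorem immediately delivers the square-root singular expansion of $f(z) = \tcCp(z)$ around $\rho$, and the fact that $(\tau,\rho) = (f(\rho),\rho)$ satisfies the stated singular system $y = z\exp(g(y,z)+A(z))$, $1 = y\,g_y(y,z)$. For the final assertion, I would invoke the second half of Theorem~\ref{Thsystem1}: under the additional hypothesis $[z^n]\,\tcCp(z) > 0$ for $n \ge n_0$, it gives that $\rho$ is the only singularity on $|z| = \rho$ and that $\tcCp(z)$ continues analytically to the $\Delta$-like domain $D = \{z : |z| < \rho+\epsilon,\ 1 - z/\rho \notin \mathbb{R}^-\}$. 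The main obstacle — really the only non-routine point — is justifying that analyticity of $g(y,z)$ and $A(z)$ at $(\tau,\rho)$ upgrades to the polydisk-interior-point statement needed for Theorem~\ref{Thsystem1}; but because $g$, $A$, and hence $F$ have nonnegative coefficients, analyticity at a point with positive real coordinates forces convergence on the corresponding closed polydisk, so $(\tau,\rho)$ is strictly interior to the domain of convergence. Everything else is a direct citation of Theorem~\ref{Thsystem1} with the dictionary $F(y;z) = z\exp(g(y,z)+A(z))$.
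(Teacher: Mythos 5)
Your proposal is correct in substance and follows essentially the same route as the paper: cast~\eqref{eq:fgA} as a single positive equation $y=F(y;z)$ with $F(y;z)=z\exp(g(y,z)+A(z))$, use subcriticality to show $F$ is analytic in a full neighbourhood of the point $(\tau,\rho)=(f(\rho),\rho)$, verify strong recursivity and analytic well-foundedness, and then invoke Theorem~\ref{Thsystem1} (including its second half for the aperiodicity/$\Delta$-domain claim). The identification of the singular system $y=z\exp(g(y,z)+A(z))$, $1=y\,g_y(y,z)$ via $\partial F/\partial y = F\cdot g_y$ is exactly as in the paper.

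Two small remarks. First, for the analyticity of $A(z)$ at $\rho$ you simply cite condition~(iii) of Definition~\ref{def:sub_unl}, whereas the paper reproves this directly by noting that $Z_{\cB'}(f(z),f(z^2),\dots)\preceq f(z)$ so its radius of convergence is at least $\rho$, and since $\rho\in(0,1)$ the P\'olya tail $A(z)=\sum_{i\ge 2}h(z^i)/i$ automatically converges past $\rho$. Both are valid here; the paper's route has the side benefit of being reused in the proof of Lemma~\ref{lem:suf_crit_unl}, where it shows condition~(iii) is a consequence of $\rho\in(0,1)$ rather than an independent hypothesis. Second, your justification of nonlinearity is slightly off: the link graph $\ell'$ contributes only a \emph{linear} term $s_1$ (i.e.\ $y$) to $Z_{\cB'}$, not a quadratic one, and in the extreme case $\cB=\{\ell\}$ (trees) the series $g(y,z)$ really is affine in $y$. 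What makes $F$ nonlinear in $y$ is the outer exponential: $F_{yy}=F\,(g_{yy}+g_y^2)$, and $g_y\ne 0$ because of the $y$-term from $\ell'$, so $F_{yy}\ne 0$ regardless. The conclusion you draw is therefore still right, but the stated reason is not.
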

\begin{proof}
Recall that the function $f(z)=\tcCp(z)$ is a solution of
$$
y=F(y;z)=z\exp(g(y,z)+A(z)).
$$
Note that $h(z)=Z_{\cB'}(f(z),f(z^2),\ldots)$
is bounded coefficient-wise above by $f(z)$ and hence the singularity of $h(z)$ is larger than $\rho$. Since $\rho\in(0,1)$
(by the remark just after Definition~\ref{def:sub_unl}),
the function $A(z)=\sum_{i\geq 2}h(z^i)/i$ is analytic at $\rho$.
By definition of subcriticality also, the function $g(y,z)$ is analytic at $(f(\rho),\rho)$.
Hence $F(y;z)$ is analytic at $(f(\rho),\rho)$. Since the system is clearly strongly recursive and
the function $f(z)$ aperiodic, we conclude from Theorem~\ref{Thsystem1} that $f(z)$ has a square-root expansion at $\rho$.
\end{proof}

\begin{lem}
Let $\tcG$ be an unlabelled subcritical block-stable graph class with $\tcC$ the connected subclass
and $\tcB$ the 2-connected subclass.
Let $\rho$ be the radius of convergence
of $f(z):=\tcCp(z)$. Define $R(s,z):=Z_{\cCp}(s,z^2,z^3,\ldots)$. Then $R(s,z)$
has a square-root singular expansion around $(\rho,\rho)$, and the singularity function $\xi(z)$ of $s\mapsto R(s,z)$ has a negative derivative at $\rho$.
\end{lem}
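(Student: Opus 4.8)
The plan is to rewrite the cycle-index equation~\eqref{eq:cyc_C_B} as a scalar functional equation for $R(s,z)$ with $s$ the primary variable and $z$ a secondary parameter, to deduce the square-root expansion from Theorem~\ref{Thsystem1}, and to obtain the sign of $\xi'(\rho)$ by differentiating the associated singular system. \textbf{The functional equation.} Substituting $s_1=s$ and $s_i=z^i$ for $i\ge 2$ into~\eqref{eq:cyc_C_B}, the left-hand side becomes $Z_{\cCp}(s,z^2,z^3,\ldots)=R(s,z)$; in the $i=1$ term of the exponential the first inner argument $Z_{\cCp}(s_1,s_2,\ldots)$ becomes $R(s,z)$ while the subsequent ones become $f(z^2),f(z^3),\ldots$, where $f:=\tcCp$, so that term equals $g(R(s,z),z)$ in the notation of~\eqref{eq:def_g}; the terms with $i\ge 2$ collect exactly to $A(z)$ as in~\eqref{eq:def_A}. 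Thus
\[
R(s,z)=s\exp\bigl(g(R(s,z),z)+A(z)\bigr),
\]
i.e.\ $R(s,z)$ satisfies the same equation~\eqref{eq:fgA} as $f(z)$, but with the prefactor $z$ replaced by $s$; in particular $f(z)=R(z,z)$.

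\textbf{Square-root expansion.} I would apply Theorem~\ref{Thsystem1} to the single equation $y=\widetilde F(y;s,z):=s\exp(g(y,z)+A(z))$, with $s$ the primary variable and $z=\rho$ the secondary valuation. The hypotheses are verified exactly as in the proof of Lemma~\ref{lem:CpOGF}: the equation is strongly recursive and nonlinear in $y$; $g(y,z)$ is analytic at $(f(\rho),\rho)$ by subcriticality and $A(z)$ is analytic at $\rho$, so $(f(\rho);\rho,\rho)$ is an inner point of the region of convergence of $\widetilde F$; and the equations $(\mathrm{E})$, $(\mathrm{S})$ at $(y;s,z)=(f(\rho);\rho,\rho)$ are precisely the two relations $y=z\exp(g(y,z)+A(z))$ and $1=yg_y(y,z)$ supplied by Lemma~\ref{lem:CpOGF}, where moreover the Jacobian is the scalar $\widetilde F_y=yg_y=1$. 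Hence all conditions of Theorem~\ref{Thsystem1} hold with critical point $(f(\rho);\rho,\rho)$, and the theorem yields a square-root expansion of $R(s,z)$ around $(\rho,\rho)$; the singularity function $\xi(z)$ of $s\mapsto R(s,z)$ is then analytic at $\rho$ with $\xi(\rho)=\rho$.

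\textbf{The derivative $\xi'(\rho)$.} For $z$ near $\rho$ the pair $(\xi(z),\tau(z))$, with $\tau(\rho)=f(\rho)$, solves the singular system $\tau=\xi\exp(g(\tau,z)+A(z))$, $\tau g_y(\tau,z)=1$. Differentiating the first relation in $z$, substituting $\xi\exp(g(\tau,z)+A(z))=\tau$ and then using $\tau g_y(\tau,z)=1$, the terms in $\tau'(z)$ cancel and one is left with
\[
\xi'(z)=-\,\xi(z)\bigl(g_z(\tau(z),z)+A'(z)\bigr),
\]
so $\xi'(\rho)=-\rho\bigl(g_z(f(\rho),\rho)+A'(\rho)\bigr)$. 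Here $g_z(f(\rho),\rho)$ and $A'(\rho)$ are nonnegative, being values at positive arguments of series with nonnegative coefficients. To obtain strict negativity it suffices to exhibit one strictly positive contribution: since $\cG$ is block-stable it contains the link graph, so $K_2\in\cB$ and $Z_{\cB'}$ contains the monomial $s_1$ with coefficient $1$; consequently $Z_{\cB'}(f(z^2),f(z^4),\ldots)\ge f(z^2)$ coefficientwise, so the $i=2$ term of $A$ dominates $\tfrac12 f(z^2)$, giving $A'(\rho)\ge\rho\,f'(\rho^2)\ge\rho>0$ (using $[z^1]\tcCp=1$ and $\rho^2<\rho$). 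Hence $\xi'(\rho)<0$.

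\textbf{Main obstacle.} The analytic part is essentially a rerun of Lemma~\ref{lem:CpOGF}; the only genuinely delicate point is the \emph{strict} inequality $\xi'(\rho)<0$, i.e.\ ruling out $\xi'(\rho)=0$. This forces one to produce an honest positive term in $g_z(f(\rho),\rho)+A'(\rho)$, and the cleanest source is the linear term $s_1$ of $Z_{\cB'}$ coming from the link graph, which lies in every block-stable class. The cancellation of $\tau'(z)$ in the differentiation, which happens precisely because one evaluates \emph{at} the critical point, is what makes the sign of $\xi'(\rho)$ transparent.
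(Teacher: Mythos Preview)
Your proof is correct and follows the same route as the paper: derive the scalar equation $R(s,z)=s\exp(g(R(s,z),z)+A(z))$ from~\eqref{eq:cyc_C_B}, observe that its singular system at $(f(\rho);\rho,\rho)$ coincides with that of $f$ in Lemma~\ref{lem:CpOGF}, and apply Theorem~\ref{Thsystem1}. Your treatment of $\xi'(\rho)<0$ is in fact more explicit than the paper's, which simply notes that $F(y;z,s)$ genuinely depends on $z$ (hence $F_z>0$ by nonnegativity of all coefficients, forcing $\xi'(\rho)<0$); your direct differentiation of the singular system and the use of the link-graph term $s_1$ in $Z_{\cB'}$ to secure strict positivity of $A'(\rho)$ make this step transparent.
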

\begin{proof}
The bivariate series $R(s,z)$ is a refinement of $\tcCp(z)$, since $\tcCp(z)=R(z,z)$.
The equation~\eqref{eq:cyc_C_B} implies that
$y=F(y;z,s):=s\ \!\exp(g(y,z)+A(z))$,
with $g(y,z)$ and $A(z)$ defined in~\eqref{eq:def_g} and \eqref{eq:def_A}. The singular system for $R(s,z)$ is
$$y=s\ \!\exp(g(y,z)+A(z)),\ \ 1=y\ \!g_y(y,z).$$
This is the same as the singular system
of $f(z)$ (given in Lemma~\ref{lem:CpOGF}) except that the variable $z$ on the left-hand side of $\exp$ is replaced by the variable $s$.
By Lemma~\ref{lem:CpOGF},
$(y,z)=(f(\rho),\rho)$ is a solution of the singular system of $f(z)$, hence clearly
$(y;z,s)=(f(\rho);\rho,\rho)$ is a solution of the singular system of $R(s,z)$, and $F(y;z,s)$ is analytic at $(f(\rho);\rho,\rho)$, since $g(y,z)$ is analytic at $(f(\rho),\rho)$. Thus, Theorem~\ref{Thsystem1} ensures that $R(s,z)$ has
a square-root singular expansion at $(\rho,\rho)$.
In addition,  the singularity function $\xi(z)$ has a negative derivative,
since $F(y;z,s)$ depends only on $z$.
\end{proof}

\begin{thm}
Let $\tcG$ be an unlabelled subcritical block-stable graph class
such that $[z^n]\, \cCp(z)>0$ for $n\ge n_0$. Then
there exist constants
$c>0$ and $\gamma$ such that
$$
[z^n]\tcG(z)=c\ \!n^{-5/2}\ \!\gamma^n(1+o(1))\ \ \mathrm{as}\ n\to\infty
$$
for $\gamma\geq\gamma^*\approx 2.95576$, where $\gamma^*$ is the
growth rate of unlabelled forests.
\end{thm}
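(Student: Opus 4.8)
The plan is to follow exactly the pattern of the labelled case (Theorem in Section~\ref{sec:sub_labelled}), replacing the EGF identities by their OGF/cycle-index counterparts and invoking the two preceding lemmas. Recall that in the unlabelled setting we have $\tcG(z)=\exp\left(\sum_{i\ge 1}\tfrac1i\tcC(z^i)\right)$ and $\tcC(z)=\int_0^z\tcCp(t)\,\frac{\mathrm dt}{t}$, the latter because differentiating the defining relation $\cCp=\cZ\cdot\Set(\cB'\circ\cCp)$ with respect to the root atom gives $z\tcC'(z)=\tcCp(z)$ just as in the labelled case (the root atom is distinguished and labelled in both frameworks). So the first step is: from Lemma~\ref{lem:CpOGF}, $f(z)=\tcCp(z)$ has a square-root singular expansion at its radius of convergence $\rho$, and by the remark after Definition~\ref{def:sub_unl} we know $0<\rho\le\rho^*$, so $\gamma:=1/\rho\ge 1/\rho^*=\gamma^*\approx 2.95576$, the growth rate of unlabelled forests. (The lower bound is just the coefficient-wise domination of $\tcCp$ by rooted unlabelled non-plane trees.)

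Next I would handle the singular expansion of $\tcG(z)$ itself. Since $f(z)$ has a square-root expansion at $\rho$, the integral $\tcC(z)=\int_0^z f(t)\,\mathrm dt/t$ has a singular expansion of order $3/2$ at $\rho$ (by the fact recalled at the end of Section~\ref{sec:singularexpansion}, noting that $f(0)=0$ so the integrand is regular at the origin). Now I must check that the ``small copies'' $\sum_{i\ge 2}\tfrac1i\tcC(z^i)$ do not spoil this. The key observation is that $\tcC(z)$ has radius of convergence $\rho<1$, so for $i\ge 2$ the series $\tcC(z^i)$ has radius of convergence $\rho^{1/i}>\rho$ and is therefore analytic in a neighbourhood of $\rho$; hence $\sum_{i\ge 2}\tfrac1i\tcC(z^i)$ is analytic at $\rho$ (absolute convergence for $|z|\le\rho$ follows since $\tcC$ is dominated coefficient-wise by a geometric-type series, exactly as in the proof that $A(z)$ is analytic at $\rho$ in Lemma~\ref{lem:CpOGF}). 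Therefore $\sum_{i\ge 1}\tfrac1i\tcC(z^i)=\tcC(z)+(\text{analytic at }\rho)$ still has a singular expansion of order $3/2$ at $\rho$, and since $\exp$ is entire, $\tcG(z)=\exp\big(\sum_{i\ge 1}\tfrac1i\tcC(z^i)\big)$ has a singular expansion of order $3/2$ at $\rho$ as well.

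Then I would invoke the analytic-continuation part. Under the hypothesis $[z^n]\,\tcCp(z)>0$ for $n\ge n_0$, the second assertion of Lemma~\ref{lem:CpOGF} gives that $\rho$ is the unique dominant singularity of $f(z)$ and $f$ continues analytically to a $\Delta$-domain $D$; consequently $\tcC(z)$, being the integral of $f$, and then $\tcG(z)=\exp(\sum_{i\ge1}\tfrac1i\tcC(z^i))$ (the tail $\sum_{i\ge 2}$ being analytic in a disc strictly larger than $\rho$) continue analytically to a $\Delta$-domain and have $\rho$ as sole dominant singularity. Applying the transfer theorem of singularity analysis (formula~\eqref{eqanexp} with $\alpha=3/2$, so $n^{-\alpha-1}=n^{-5/2}$) yields
$$
[z^n]\tcG(z)=c\,n^{-5/2}\,\gamma^n(1+o(1)),\qquad \gamma=1/\rho,
$$
with $c>0$ because the coefficient $b(\rho(\vv),\vv)$ in the square-root expansion of $f$ is strictly positive (part of the definition of square-root expansion) and this positivity is carried through the integration and the exponential. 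This completes the proof.

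The main obstacle, and the only place requiring genuine (if routine) care, is the handling of the ``small copies'' $\tcC(z^i)$ for $i\ge2$: one must verify both that they are analytic at $\rho$ and that the infinite sum converges there — this uses crucially that $\rho<1$, which is guaranteed by the a priori bound $\rho\le\rho^*<1$. Everything else is a direct transcription of the labelled argument, now leaning on Lemma~\ref{lem:CpOGF} in place of the labelled lemma. (Part (b) of the definition of unlabelled subcriticality, concerning $q(z)=Z_{\cC}(0,z^2,z^3,\ldots)$, is not needed for this enumerative statement; it becomes relevant only for finer results such as degree distribution, where one needs control on the structure attached at the root beyond the leading singular term.)
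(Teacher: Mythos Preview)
Your proof contains a genuine error at its very first step. The identity $z\,\tcC'(z)=\tcCp(z)$, equivalently $\tcC(z)=\int_0^z\tcCp(t)\,\frac{\mathrm{d}t}{t}$, is \emph{false} in the unlabelled setting. It holds for EGFs because in a labelled graph on $n$ vertices there are exactly $n$ ways to choose a root; but for an unlabelled graph the number of inequivalent rootings is $n$ divided by the order of the automorphism group, so $|\tcCp_n|\neq n|\tcC_n|$ in general (e.g.\ a path on three vertices has two inequivalent rootings, not three). What \emph{is} true at the cycle-index level is $Z_{\cCp}=s_1\,\partial Z_{\cC}/\partial s_1$; after the specialisation $s_i\mapsto z^i$ this does not reduce to $z\,\tcC'(z)$, because $\tcC'(z)=\tfrac{\mathrm d}{\mathrm dz}Z_{\cC}(z,z^2,\ldots)$ picks up contributions from \emph{all} the $s_i$-derivatives via the chain rule.

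This is exactly why the paper introduces the bivariate refinement $R(s,z)=Z_{\cCp}(s,z^2,z^3,\ldots)$ and $Q(s,z)=Z_{\cC}(s,z^2,z^3,\ldots)$: the correct derivative relation $R(s,z)=s\,Q_s(s,z)$ holds only \emph{before} specialising $s=z$, and one must integrate in $s$, control the constant of integration $Q(0,z)=q(z)$, and only then set $s=z$. The second preceding lemma (square-root expansion of $R(s,z)$ at $(\rho,\rho)$ with $\xi'(\rho)<0$) is precisely what makes this specialisation yield an honest order-$3/2$ expansion in the single variable $z$. Your dismissal of condition~(b) of Definition~\ref{def:sub_unl} is therefore also wrong: the analyticity of $q(z)=Q(0,z)$ at $\rho$ is used essentially, as the integration constant, and without it one cannot conclude that $\tcC(z)$ has the claimed singular expansion.
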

\begin{proof}
First we show that $\tcC(z)$ has a singular expansion of order $3/2$ at $\rho$.
Define $Q(s,z):=Z_{\cC}(s,z^2,z^3,\ldots)$ (note that $\cC(z)=Q(z,z)$).
The general relation $Z_{\cA'}=\frac{\partial}{\partial s_1}Z_{\cA}$
ensures that $R(s,z)=sQ_s(s,z)$, hence
$$
Q(s,z)=Q(0,z)+\int_0^sR(w,z)\frac{\mathrm{d}w}{w}.
$$
The term $Q(0,z)=Z_{\cC}(0,z^2,z^3,\ldots)=q(z)$ is analytic at $\rho$, by definition of subcriticality.
Since $R(s,z)$ has a square-root expansion at $(\rho,\rho)$, the integral term has a singular expansion of order $3/2$ at $(\rho,\rho)$ (see Section~\ref{sec:singularexpansion}) of the form
$$
Q(s,z)=a(s,z)+b(s,z)\cdot(1-s/\rho(z))^{3/2}.
$$
Therefore, $\cC(z)=Q(z,z)$ has a singular expansion of the form
$$
\cC(z)=a(z,z)+b(z,z)\cdot\big((\rho(z)-z)/\rho(z)\big)^{3/2}.
$$
Since $\xi(z)$ has a negative derivative at $\rho$ and $\xi(\rho)=\rho$, there exists a function $\lambda(z)$
analytic and nonzero at $\rho$ such that $\rho(z)-z=(1-z/\rho)\cdot\lambda(z)$. We conclude that $\cC(z)$
has a singular expansion of order $3/2$, of the form
$$
\cC(z)=\alpha(z)+\beta(z)\cdot\big(1-z/\rho\big)^{3/2},
$$
with $\alpha(z)=a(z,z)$ and $\beta(z)=b(z,z)\cdot\big(\lambda(z)/\rho(z)\big)^{3/2}$.

Recall that $\tcG(z)$ and $\tcC(z)$ are related by
$$
\tcG(z)=\exp(\tcC(z)+E(z)),\ \ \mathrm{with}\ E(z):=\sum\nolimits_{i\geq 2}\frac1{i}\tcC(z^i).
$$
Since $E(z)$ is analytic at $\rho$, the singular expansion of order $3/2$ at $\rho$
for $\tcC(z)$ yields also a singular expansion of order $3/2$ at $\rho$ for $\tcG(z)$.
The transfer theorems of singularity analysis then yield the estimate for $[z^n]\tcG(z)$.
\end{proof}

\subsection{Sufficient conditions for subcriticality}
Similarly as in the labelled case, we provide a list of conditions
that implies subcriticality, but will be convenient to check on examples (see Section~\ref{sec:ex} for the application):
\begin{lem}\label{lem:suf_crit_unl}
Let $\tcG$ be an unlabelled block-stable graph class with $\tcC$ and $\tcB$
the connected and 2-connected subclasses. Let $f(z), g(y,z)$ be defined as~\eqref{eq:def_f} and \eqref{eq:def_g} and let $\rho$ be the radius of convergence of $f(z)$.
For $z>0$ let $\eta(z)$ be the radius of convergence of $y\mapsto g(y,z)$.
Assume that
\begin{enumerate}
\item
there exist constants $c$ and $\gamma>0$ such that $[z^n]\tcCp\leq c\ \!\gamma^n$,
\item
the series $\ds g_y(y,z):=\frac{\partial}{\partial y}g(y,z)$ satisfies
$\ds\lim_{y\to\eta(\rho)^-}g_y(y,\rho)=+\infty$,
\item the function $\eta(z)$ is continuous at $\rho$, and
\SPP\SPP\SPP\SPP
\item the radius of convergence of $q(z)=Z_{\cC}(0,z^2,z^3\dots)$ is larger than $\rho$.
\end{enumerate}
Then the unlabelled class $\tcG$ is subcritical.
\end{lem}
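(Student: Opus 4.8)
The plan is to check, one by one, the four requirements of subcriticality in Definition~\ref{def:sub_unl}: conditions (i), (ii), (iii) for the triple $(f(z),g(y,z),A(z))$, and condition (b). Condition (b) is exactly hypothesis (4). Condition (i), that $\rho$ be non-zero, is immediate from hypothesis (1): since $[z^n]f=[z^n]\tcCp\le c\,\gamma^n$, we have $\rho\ge 1/\gamma>0$; we also record, from the remark following Definition~\ref{def:sub_unl}, that $\rho\le\rho^*<1$, which will be used repeatedly. The substance of the lemma lies in establishing the single inequality $\tau:=f(\rho)<\eta(\rho)$; together with $\rho<1$ and the continuity of $\eta$ at $\rho$ this will give (ii), and condition (iii) is cheap and disposed of at the end.

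First I would show $f(z)<\eta(z)$ for every $z\in(0,\rho)$. For such $z$, $f(z)$ is finite; if $f(z)\ge\eta(z)$ then, since $y\mapsto g(y,z)$ has nonnegative coefficients and radius of convergence $\eta(z)$, we would get $g(f(z),z)=+\infty$, hence $f(z)=z\exp(g(f(z),z)+A(z))=+\infty$, a contradiction. Strictness then comes for free: $f$ is strictly increasing (as $f(z)=z+\cdots$ with nonnegative coefficients), while $z\mapsto\eta(z)$ is non-increasing (the coefficients $[y^m]g(y,z)$ being non-decreasing in $z$), so an interior crossing $f(z_1)=\eta(z_1)$ would force $f(z)>f(z_1)=\eta(z_1)\ge\eta(z)$ for $z$ just above $z_1$, contradicting what we just proved. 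Letting $z\to\rho^-$ and using hypothesis (3) gives $\tau=\lim_{z\to\rho^-}f(z)\le\lim_{z\to\rho^-}\eta(z)=\eta(\rho)$.

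The next step — the main obstacle, I expect — is to upgrade this to a strict inequality. Taking logarithms in the defining equation and differentiating on $(0,\rho)$ gives $f'(z)/f(z)=1/z+\tfrac{d}{dz}g(f(z),z)+A'(z)$; since $g(y,z)$ is non-decreasing in $z$ and $g_y(\cdot,z)$ is non-decreasing, comparing difference quotients shows $\tfrac{d}{dz}g(f(z),z)\ge g_y(f(z),z)\,f'(z)$, and discarding the nonnegative terms $1/z$ and $A'(z)$ yields, since $f'(z)\ge 1>0$,
\[
f(z)\,g_y(f(z),z)\ \le\ 1\qquad(0<z<\rho).
\]
This first forces $\tau<\infty$: if $\tau=+\infty$ then also $\eta(\rho)=+\infty$ by the previous step, so by hypothesis (2) there is $Y>1$ with $g_y(Y,\rho)>1$; for $z$ near $\rho$ we then have $f(z)>Y$ and, by monotone convergence, $g_y(Y,z)>1$, whence $f(z)\,g_y(f(z),z)\ge f(z)\,g_y(Y,z)>Y>1$, contradicting the displayed bound. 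So $\tau<\infty$, and it remains to rule out $\tau=\eta(\rho)$. Assume it holds. Fix an arbitrary $y^*\in(0,\tau)$; for $z$ close enough to $\rho$ we have $f(z)>y^*$, hence, by monotonicity of $g_y(\cdot,z)$ and the displayed bound, $g_y(y^*,z)\le g_y(f(z),z)\le 1/f(z)$. Letting $z\to\rho^-$ — the series $g_y(y^*,z)=\sum_m m[y^m]g(y,z)(y^*)^{m-1}$ has nonnegative coefficients non-decreasing in $z$ and converges at $z=\rho$ because $y^*<\tau=\eta(\rho)$, so it tends to $g_y(y^*,\rho)$ — we get $g_y(y^*,\rho)\le 1/\tau$. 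Since $y^*\in(0,\tau)$ was arbitrary, $g_y(\cdot,\rho)$ stays bounded on $(0,\tau)$, contradicting hypothesis (2). Hence $\tau<\eta(\rho)$. The delicate point, absent in the one-variable Lemma~\ref{lem:criterion_sub}, is precisely that $g_y$ carries the extra variable $z$; this is why the comparison must be routed through a fixed $y^*<\tau$ and a limit $z\to\rho^-$, and why the continuity of $\eta$ at $\rho$ is genuinely needed.

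Finally I would cash in $\tau<\eta(\rho)$. For (ii): since $\rho<1$, each $z\mapsto f(z^i)$ with $i\ge2$ is analytic at $\rho$ (radius $\rho^{1/i}>\rho$) and $f(\rho^i)<\infty$; using $\tau<\eta(\rho)$ and the continuity of $\eta$ at $\rho$, one can pick $\rho'\in(\rho,\sqrt{\rho})$ and $\tau'>\tau$ with $\tau'<\eta(\rho')$ and all $f(\rho'^i)<\infty$, so $g(\tau',\rho')=Z_{\cB'}(\tau',f(\rho'^2),f(\rho'^3),\dots)<\infty$; since $g(y,z)$ has nonnegative coefficients, this means $g$ is analytic at $(f(\rho),\rho)$. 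For (iii): from $\exp(W)\ge 1+W$ coefficientwise (for $W$ with nonnegative coefficients and $W(0)=0$), applied to $W=g(f(z),z)+A(z)$, one gets $f(z)\ge z\bigl(1+h(z)\bigr)$ with $h(z):=g(f(z),z)=Z_{\cB'}(f(z),f(z^2),\dots)$, hence $[z^{n+1}]f\ge[z^n]h$ and so $h$ has radius of convergence at least $\rho$; since $h(0)=0$ and $\rho<1$, the series $A(z)=\sum_{i\ge2}h(z^i)/i$ converges and is analytic on $|z|<\sqrt{\rho}$, so its radius of convergence exceeds $\rho$. This verifies (i)–(iii) and (b), and hence $\tcG$ is subcritical.
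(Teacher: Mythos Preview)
Your proof is correct and follows essentially the same route as the paper's: derive $f(z)\,g_y(f(z),z)\le 1$ by differentiating~\eqref{eq:fgA}, use this together with hypothesis~(2) to force $\tau<\eta(\rho)$, then invoke continuity of $\eta$ for analyticity of $g$ and the domination $h\preceq f$ for analyticity of $A$. Your treatment is in fact more careful than the paper's in two spots --- you prove $\tau<\infty$ directly from hypothesis~(2) rather than citing finiteness of solutions of nonlinear recursive systems, and you make explicit (via the fixed $y^*$ and monotone convergence) the passage from the bound on $g_y(f(z),z)$ to a contradiction with $g_y(y,\rho)\to\infty$, a step the paper leaves implicit.
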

\begin{proof}
We have to show that the list of four criteria above implies that
(i) $\rho$ is non-zero,
(ii) $A(z)$ is analytic at $\rho$, and
(iii) $g(y,z)$ is analytic at $(f(\rho),\rho)$.
The first criterion exactly implies (i). It is actually in $(0,1)$
(see the remark after Definition~\ref{def:sub_unl} about $\rho$ being smaller than $1$).
And we have already shown in Lemma~\ref{lem:CpOGF} that $\rho\in(0,1)$
automatically implies that $A(z)$ is analytic at $\rho$, which proves (ii).
Next we show (iii) holds.
First we show that $f(\rho)<\eta(\rho)$, that is, $y\mapsto g(y,\rho)$ is analytic at $f(\rho)$. If $\eta(\rho)<f(\rho)$, then $g(f(z),z)$ is infinite at $z=\rho$,
so $f(z)=z\ \!\exp(g(f(z),z)+A(z))$ is also infinite at $\rho$, which is impossible (any solution  of
a strongly recursive system is finite at its radius of convergence). The case $\eta(\rho)=f(\rho)$ is excluded
in a similar way as in Lemma~\ref{lem:criterion_sub}. More precisely, differentiating~\eqref{eq:fgA} gives
$$
f'(z)=f(z)/z+\Big(A'(z)+g_y(f(z),z)f'(z)+g_z(f(z),z)\Big)f(z).
$$
Hence, $f'(z)\geq g_y(f(z),z)f'(z)f(z)$ for $z\in(0,\rho)$, which yields $g_y(f(z),z)\leq 1/f(z)=O(1)$ as $z\to\rho$. This contradicts $g_y(y,\rho)\to\infty$ as $y\to \eta(\rho)$.
Thus, $\eta(\rho)\neq f(\rho)$ since the second criterion says that $g_y(y,z)$ is infinite at
$(\eta(\rho),\rho)$. So we have $f(\rho)<\eta(\rho)$, which ensures that $y\mapsto g(y,\rho)$ is analytic at $f(\rho)$.
But we need to prove a little stronger condition, namely that $g(y,z)$ is analytic at $(f(\rho),\rho)$. Due to the
continuity condition on $\eta(z)$, $f(z_0)<\eta(z)$ in a small interval around
$\rho$ and therefore $g(y,z)$ converges in a neighbourhood of $(f(\rho),\rho)$, i.e., $g(y,z)$ is analytic
at $(f(\rho),\rho)$.
\end{proof}

\section{Examples of subcritical graph classes}\label{sec:ex}
Recall that a block-stable graph class is completely determined by its 2-connected
subclass $\cB$ and that $\cB$ must contain the link graph $\ell$ (a graph with one edge together with its two labelled end vertices). Let $\cM=\cB\backslash \ell$.
We will deal with three block-stable classes and show subcriticality both in
the labelled and  unlabelled cases: the class of cacti graphs, where $\cM$ consists of (convex) polygons; the class of outerplanar graphs, where $\cM$ consists of dissections of (convex) polygons; and  the class of series-parallel graphs, where $\cM$ consists
of simple graphs obtained from a double
edge by repeatedly choosing an edge to be doubled or to have a vertex
inserted in its middle.
Using the subcriticality criteria introduced in Sections~\ref{sec:sub_labelled} and~\ref{sec:sub_unlabelled} we will show that these three block-stable classes are subcritical and therefore feature a universal asymptotic behaviour with subexponential term $n^{-5/2}$.
\begin{thm}\label{theo:ex}
The classes of cacti graphs, outerplanar graphs, and series-parallel graphs are subcritical
both in the labelled and unlabelled cases. As a consequence, the counting coefficient $g_n$
of each of
these classes -- $g_n=|\cG_n|/n!$ in labelled case, $g_n=|\tcG_n|$ in unlabelled case --
is asymptotically of the form
$$
g_n= g\ \!n^{-5/2}\rho^{-n}(1+o(1))
$$
for some constants $g>0$, $\rho\in(0,1)$. The first few digits of $\rho$ in labelled case and the approximate values of $\rho$ in unlabelled case are resumed in Table~\ref{table:constant-growth}.
\end{thm}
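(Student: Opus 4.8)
The plan is to treat the three classes uniformly by reducing each one to the abstract subcriticality criteria established in Lemma~\ref{lem:criterion_sub} (labelled case) and Lemma~\ref{lem:suf_crit_unl} (unlabelled case), and then to invoke the two asymptotic-estimate theorems of Sections~\ref{sec:sub_labelled} and~\ref{sec:sub_unlabelled}. For each class the only class-specific work is an explicit description of the generating series (EGF/OGF and, in the unlabelled case, the cycle-index sum) of the $2$-connected subclass $\cB$, equivalently of $\cM=\cB\backslash\ell$, so that one can verify the hypotheses of those lemmas. Since $\cB'(y) = y + \cM'(y)$ and $g(y) = \cB'(y)$, the core quantities to control are the radius of convergence $\eta$ of $y\mapsto\cB'(y)$ and the behaviour of $\cB''(y)$ (resp. of $g_y(y,z)$) as $y\to\eta^-$.

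First I would dispatch the labelled case. For cacti, $\cM$ is the class of polygons (cycles), so $\cM(y) = \tfrac12(\log\frac1{1-y} - y - \tfrac{y^2}{2})$ has radius of convergence $\eta = 1$, and $\cB''(y)\to\infty$ as $y\to1^-$; for outerplanar graphs $\cM$ counts dissections of polygons, whose EGF is algebraic with a square-root singularity at some $\eta>0$, so $\cB'(y)$ has a square-root singular expansion; for series-parallel graphs $\cM$ is described by the networks/$2$-connected SP grammar of \cite{GiNoRu09}, whose EGF again has a square-root singularity at $\eta$. In the cacti case I apply Lemma~\ref{lem:criterion_sub} directly ($g'(y)\to\infty$); in the outerplanar and SP cases I use the remark following Lemma~\ref{lem:criterion_sub} (a square-root singular expansion of $\cB'$ forces subcriticality), or equivalently check $\eta\cB''(\eta)>1$. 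In all three cases $[z^n]\cCp(z)>0$ for $n\ge n_0$ is immediate since these classes contain all trees, so the labelled Theorem applies and gives $g_n = g\,n^{-5/2}\rho^{-n}(1+o(1))$ with $\rho = 1/\gamma \in (0,1)$.

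Next I would handle the unlabelled case by verifying the four conditions of Lemma~\ref{lem:suf_crit_unl}. Condition~(1), the sub-exponential-to-exponential bound $[z^n]\tcCp \le c\gamma^n$, follows because $\tcCp$ is dominated coefficient-wise by a class of bounded degree or by general planar structures with a known exponential bound; alternatively it follows once one exhibits a finite positive radius of convergence, which is automatic from block-stability and the upper bound $\rho\le\rho^*$ noted after Definition~\ref{def:sub_unl}. For condition~(2) one writes $g(y,z) = Z_{\cB'}(y, f(z^2), f(z^3),\dots) = y + Z_{\cM'}(y,f(z^2),\dots)$ and checks that, at $z=\rho$, the series $y\mapsto g(y,\rho)$ has radius of convergence $\eta(\rho)$ with $g_y(y,\rho)\to\infty$ as $y\to\eta(\rho)^-$: for cacti this is the $\log$-type singularity inherited from cycles, for outerplanar and SP it is the square-root singularity of the dissection/network cycle-index sums evaluated at the (convergent, since $\rho<1$) arguments $f(z^i)$, $i\ge2$. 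Condition~(3), continuity of $\eta(z)$ at $\rho$, follows from the explicit dependence of the dominant singularity of $Z_{\cM'}(\cdot,f(z^2),\dots)$ on the parameters $f(z^i)$, which are themselves continuous at $\rho$ (indeed analytic there, since $\rho<1$ and $f$ is analytic on $|z|<\rho$, hence at $\rho^i$). Condition~(4), that $q(z)=Z_{\cC}(0,z^2,z^3,\dots)$ has radius of convergence exceeding $\rho$, holds because $q(z)$ only involves $f(z^i)$ for $i\ge2$ and thus inherits singularities no worse than those of $f(z^2)$, located at $\sqrt\rho>\rho$. With all four conditions verified, Lemma~\ref{lem:suf_crit_unl} gives subcriticality and the unlabelled Theorem yields the claimed asymptotics, with $\gamma\ge\gamma^*\approx 2.95576$.

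The main obstacle I anticipate is not the abstract machinery but the explicit singularity analysis of the $2$-connected series in the unlabelled setting, particularly for series-parallel graphs: one must set up the cycle-index sum $Z_{\cB'}$ (or the full dictionary-driven system for the $3$-connected-free networks decomposition) and prove it has the requisite square-root/logarithmic singular behaviour in the $y$ variable with the other arguments frozen at convergent values $f(z^i)$. This requires care because the network decomposition for SP-graphs in the unlabelled case involves substitutions of the cycle-index sum into itself (as in \eqref{eq:cyc_C_B}), so one must check that the relevant subsystem is strongly recursive and analytically well-founded, then appeal to Theorem~\ref{Thsystem1} to extract the square-root expansion; one should also confirm aperiodicity so that $\rho$ is the unique dominant singularity, which is where the coefficient-positivity remark after Theorem~\ref{Thsystem1} (or Lemma~\ref{Lepositive}) is used. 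The cacti and outerplanar cases are comparatively routine since their $\cM$-series are essentially explicit. Finally, the numerical values of $\rho$ recorded in Table~\ref{table:constant-growth} are obtained by solving the singular systems $y = z\exp(g(y,z)+A(z))$, $1 = y\,g_y(y,z)$ numerically, as carried out in Section~\ref{sect: constant} for SP-graphs.
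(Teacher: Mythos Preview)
Your proposal is correct and follows essentially the same route as the paper: verify the hypotheses of Lemma~\ref{lem:criterion_sub} and Lemma~\ref{lem:suf_crit_unl} class by class, then apply the asymptotic theorems of Sections~\ref{sec:sub_labelled}--\ref{sec:sub_unlabelled}; you also correctly single out the unlabelled SP case as the place where real work is needed (the paper handles it via the pole-fixing and pole-exchanging network systems and the injection $\cZ^3\cdot\cD\subseteq\cB'$, see Lemmas~\ref{lem:square_D}--\ref{lem:final_SP}).

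One place where your sketch is too quick is condition~(4): it is not automatic that $q(z)=Z_{\cC}(0,z^2,z^3,\ldots)$ is analytic at $\rho$ merely because the arguments $z^i$ lie inside the disk of radius $\rho$ --- you need control on $Z_{\cC}$ (or $Z_{\cB}$) as a multivariate series. The paper handles this by writing out the explicit cycle-index sum $Z_{\cB}$ for cacti and outerplanar graphs and checking analyticity directly, and for SP it uses the nontrivial identity $Z_{\cC}(0,z^2,\ldots)=Z_{\cB}(0,f(z^2),f(z^3),\ldots)$ together with the network expression for $Z_{\cB}$; you should plan on doing the same rather than appealing only to the location of $\sqrt{\rho}$.
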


We do not claim here the originality of the above asymptotic estimates, except for unlabelled series-parallel graphs: labelled outerplanar and SP graphs are treated in~\cite{BoGiKaNo07}; unlabelled outerplanar graphs in~\cite{BoFuKaVi07b}; labelled cacti in~\cite{PaWe07} and unlabelled cacti in~\cite{Sh07}). What is novel, however, is how we have derived these asymptotic estimates, namely through a unified (firstly for labelled and unlabelled cases and secondly for various graph classes) method, based on the decomposition grammar, the singularity analysis and the subcriticality criteria that are easy to check, which we will show below. Analogous asymptotic estimates hold for classes of graphs that are stable under taking connected, 2-connected, \emph{and} 3-connected components, but have only finite 3-connected subclass (see Subsection~\ref{sub:3-connex}).

To prove Theorem~\ref{theo:ex}, we check whether the sufficient conditions for subcriticality
(Lemma~\ref{lem:criterion_sub} in the labelled case, Lemma~\ref{lem:suf_crit_unl} in the unlabelled case) are satisfied.
Throughout this section we use the functions and notations in Lemmas~\ref{lem:criterion_sub} and~\ref{lem:suf_crit_unl}.

\subsection{Cacti graphs.}
The asymptotic study of cacti graphs
is carried out in~\cite{PaWe07} for the labelled case and in ~\cite{Sh07} for the unlabelled case.
Cacti graphs are such that $\cM=\cB\backslash \ell$ consists of (unoriented convex) polygons. Therefore, the derived class $\cB'$ is
isomorphic to unoriented sequences of at least two vertices (because the polygon can be broken at the root-vertex).
Thus
$$
\cB'(y)=y+\frac{y^2}{2(1-y)},
$$
where the terms $y$ counts the link-graph with a pointed (i.e. distinguished but unlabelled) vertex.
The series $g(y)=\cB'(y)$ clearly diverges at its radius of convergence $1$. Therefore the class of
labelled cacti graphs is subcritical.

In the unlabelled case, we have to take automorphisms into account. The only possible symmetries
of unoriented sequences are the identity and the order-reversing of the sequence, therefore
$$
Z_{\cB'}(s_1,s_2,\ldots)=s_1+\frac{s_1^2}{2(1-s_1)}+\frac{1+s_1}{2(1-s_2)},
$$
and the series $g(y,z)=Z_{\cB'}(y,f(z^2),f(z^3),\ldots)$ satisfies the expression
$$
g(y,z)=y+\frac{y^2}{2(1-y)}+\frac{1+y}{2(1-f(z^2))}.
$$
From this we obtain the equation satisfied by $y=f(z)$:
$$
y=z\ \!\exp\left(\sum\nolimits_{i\geq 1}\tfrac1{i}h(z)\right),\ \ \mathrm{with}\ h(z)=f(z)+\frac{f(z)^2}{2(1-f(z))}+\frac{1+f(z)}{2(1-f(z^2))}.
$$
Since $f(\rho)<\infty$ and $h(z)\preceq f(z)$ (i.e. $h(z)$ is coefficient-wise dominated by $f(z)$), we have $h(\rho)<\infty$, so that $f(\rho)<1$.
As a consequence $1/(1-f(z^2))$ is analytic at $\rho$.
Looking at the expression of $g(y,z)$,
we see that the radius $\eta(z)$ of convergence of $y\mapsto g(y,z)$ satisfies $\eta(z)=1$ for $z$ close to $\rho$ (in particular $\eta(z)$ is continuous at $\rho$)
and $g_y(y,\rho)$ goes to infinity (double pole
in $y$) when $y\to 1$. It remains to check that $q(z)$ is analytic at $\rho$. The cycle index
sum of polygons is well-known (the automorphism group is the dihedral group for each polygon):
$$
Z_{\cB}(s_1,s_2,\ldots)=-\frac{s_1}{2}+\frac1{2}\sum_{r\geq 1}\frac{\phi(r)}{r}\log\left(\frac1{1-s_r}\right)+
\frac{s_1^2+2s_1s_2+s_2}{4(1-s_2)},
$$
where $\phi$ is the Euler totient function.
Hence
$$
q(z)=
\frac{f(z^2)}{4(1-f(z^2))}+\frac{1}{2}\sum_{r\geq 2}\frac{\phi(r)}{r}\log(1-f(z^r)).
$$
This function is clearly analytic at $\rho$, which concludes the proof that the
class of unlabelled cacti graphs is subcritical.

\subsection{Outerplanar graphs.}
Our second example, outerplanar graphs, has been studied asymptotically in~\cite{BoGiKaNo07}
for the labelled case
and in~\cite{BoFuKaVi07b} for the unlabelled case.
An \emph{outerplanar graph} is a graph that can be embedded in the plane
such that all vertices lie in the outer face. It is also defined as the class of graphs avoiding  $K_4$ and $K_{3,2}$ as minors. A well-known characterisation of 2-connected outerplanar graphs with at least $3$ vertices says that they are dissections of a (unoriented convex) polygon.
So the computation shares some resemblance with the one for cacti graphs, except that the polygon is filled
with chords in a planar way. In the labelled case, the classical dual construction says that dissections of \emph{oriented} polygon are in bijection with rooted plane trees
with no node of degree 2. The leaves of the tree correspond
to the edges (minus one) of the dissection, which are themselves
equinumerous with the (non-rooted) vertices of the dissection.
Therefore
$$
\cB'(y)=y+\frac1{2}T(y),\ \ \mathrm{with}\ T(y)=\frac{(T(y)+y)^2}{1-y-T(y)},
$$
where the first term $y$ in $\cB'(y)$ takes account of the derived link-graph.
The series $T(y)$ satisfies $T(y)=(1-3y-\sqrt{y^2-6y+1})/4$, therefore it has a square-root singularity
at its radius of convergence $\rho_T:=3-2\sqrt{2}$.
This ensures (from the remark after Lemma~\ref{lem:criterion_sub}) that
the class of labelled outerplanar graphs is subcritical.

Next consider the unlabelled case. We will first compute
$Z_{\cB'}$. The only possible symmetries
are the identity and the reflection along an axis passing by the rooted vertex.
One finds (see~\cite{BoFuKaVi07b,Vi05} for a detailed calculation)
$$
Z_{\cB'}(s_1,s_2,\ldots)=\frac1{2}T(s_1)+\frac{s_1+s_2}{2s_2^2}T(s_2).
$$
Hence the series $g(y,z)=Z_{\cB'}(y,f(z^2),f(z^3),\ldots)$ satisfies
$$
g(y,z)=\frac1{2}T(y)+\frac{y+f(z^2)}{2f(z^2)^2}T(f(z^2)).
$$

We first observe that $g(f(\rho,\rho)$ is finite, since it is smaller than $f(\rho)$
which is finite, more precisely, $g(f(\rho),\rho)$ gathers from $f(\rho)$ the connected
rooted graphs with a unique block incident to the pointed vertex.
In particular $T(f(\rho))$ is finite, which ensures that $f(\rho)\leq\rho_T$.
Since $\rho<1$, $f(z^2)$ is strictly smaller than $\rho_T$ in a neighbourhood of $\rho$,
so
$T(f(z^2))$ is analytic at $\rho$. Consequently, for any $z$ close to $\rho$
the dominant singularity $\eta(z)$ of  $y\mapsto
g(y,z)$ is $\rho_T$ (because of the first term $T(y)/2$), hence $\eta(z)$ is
continuous at $\rho$ (it is actually constant equal to $\rho_T$
around $\rho$), so the criterion (2)  of subcriticality
is satisfied.
Moreover,  $g(y,z)$ inherits from $T(y)$ a square-root expansion at $\rho_T$, so $g_y(y,z)$ diverges as $y\to\rho_T$, thus the criterion (3) is satisfied.

It remains to check the criterion (4). To find an expression for $Z_{\cB}$
one has to enumerate dissections of a polygon under all possible symmetries
(rotation and reflection), where the duality with trees helps to formulate
the decompositions. All calculations were done (see~\cite{BoFuKaVi07b,Vi05}):
$$
Z_{\cB}(s_1,\dots)=\frac{s_1}{4}T(s_1)-\frac1{2}\sum_{d\geq 1}\log(1-s_d-T(s_d))+\frac{s_1^2+s_2^2+2s_1s_2}{4s_2^2}T(s_2)+P(s_1,s_2),
$$
where $P(s_1,s_2)$ is a certain polynomial in $s_1$ and $s_2$. Consequently,
 the series $q(z)=Z_{\cB}(0,f(z^2),f(z^3),\ldots)$ satisfies
$$
q(z)=-\frac1{2}\sum_{d\geq 2}\log(1-f(z^d)-T(f(z^d)))+\frac{T(f(z^2))}{4}+P(0,f(z^2)).
$$
The argument of $\log$ is non-zero around $\rho$,
because for $d\geq 2$, $f(z^d)<\rho_T$ in a neighbourhood of $\rho$,
and  $T(y)+y$ is less than $1$ at its singularity $\rho_T$
(precisely it equals $(5\rho_T+1)/4<1$).
Hence the criterion (4) is satisfied. In conclusion, Lemma~\ref{lem:suf_crit_unl} implies  that the class of unlabelled outerplanar graphs
are subcritical class.

\subsection{Series-parallel graphs.}\label{sec_sp_unl}
We use the decomposition grammar for 2-connected SP-graphs developed in~\cite{ChFuKaSh07} and~\cite{GLLW08}.
More precisely, we will use the decomposition grammar for the so-called series-parallel \emph{networks}. A network is a connected SP graph with two pointed (distinguished but unlabelled) vertices, which are called the poles and denoted by $-$ and $+$,
such that adding the edge $(-,+)$
results in a 2-connected SP graph (possibly the two vertices $-$ and $+$ are already adjacent in the network).
We call series-networks those that are not 2-connected (when deleting the edge between the poles if any) and parallel networks
those that are 2-connected with at least 2 edges. The classes of networks, series-networks and parallel-networks
are denoted respectively $\cD$, $\cS$, and $\cP$. The link-graph consisting of one edge from $-$ to $+$ is denoted by $e$ to distinguish it from the link-graph $\ell$ consisting of an edge and two labelled end vertices.

In the labelled case we obtain the decomposition grammar and its corresponding system of EGFs (on the right is the associated system for the EGFs, $\Set_{\geq k}$ means Set constrained to have at least $k$ components):
\vspace{0.2cm}

$\ds
\left\{
\begin{array}{rcl}
\cD&=&e+\cS+\cP,\\
\cS&=&\cD\cdot\cZ\cdot(e+\cP),\\
\cP&=&e\cdot\Set_{\geq 1}(\cS)+\Set_{\geq 2}(\cS)
\end{array}
\right.
$
$
\Rightarrow
$
$
\left\{
\begin{array}{rcl}
\cD(y)&=&1+\cS(y)+\cP(y),\\
\cS(y)&=&y\ \!\cD(y)(1+\cP(y)),\\
\cP(y)&=&2\exp(\cS(y))-\cS(y)-2
\end{array}
\right.
$
\vspace{0.2cm}

The system for the EGFs is clearly strongly recursive and the series $\cD(y)$ is easily aperiodic. In addition,
the function-system $\mathbf{F}(y_1,y_2,y_3,y)$ with $y_1=\cD(y)$, $y_2=\cS(y)$, $y_3=\cP(y)$ defined by the right-hand side of the system is clearly analytic
everywhere (because $\exp$ is analytic everywhere). In addition, easy lower and upper bounds imply
that the radius of convergence $\eta_{\cD}$ of $\cD(y)$ is in $(0,1)$.
Hence Theorem~\ref{Thsystem1} applies,
ensuring that all the series for networks have a square-root expansion at their common radius of convergence $\eta_{\cD}$.
Now we want to show that $g(y):=\cB'(y)$ has radius of convergence $\eta_{\cD}$ and that $g'(y)$ goes to $\infty$
as $y\to \eta_{\cD}^-$. First, note that in the labelled framework the number of networks is at least as large as the
number of vertex-rooted 2-connected SP graphs (because each graph in $\cB'$ gives rise to $d\geq 1$ networks, with
$d$ the degree of the root-vertex). So the series $y^2\cD(y)$ dominates coefficient-wise the series $y\cB'(y)$, shortly
written $\cB'(y)\preceq u\cD(y)$. Surprisingly the domination also goes the other way. Precisely speaking, we have the inclusion $\cZ^3\cdot\cD\subseteq\cB'$, as illustrated in Figure~\ref{fig:netToVert}, so that $y^3\cD(y)\preceq\cB'(y)$.
\begin{figure}[htb]
\begin{center}
\includegraphics[width=6.4cm]{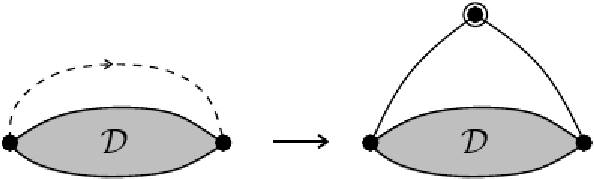}
\end{center}
\caption{Injection from networks to vertex-rooted 2-connected SP graphs
by adding one vertex and two edges.}
\label{fig:netToVert}
\end{figure}

We thus have $y^3\cD(y)\preceq \cB'(y)\preceq u\cD(y)$, which ensures that $\cB'(y)$ has the radius of convergence $\eta_{\cD}$
and that $g(y)=\cB'(y)$ satisfies $g'(y)\to\infty$ as $y\to \eta_{\cD}^-$, since $\cD(y)$ has a square-root expansion and hence $\cD'(y)\to\infty$ as $y\to \eta_{\cD}^-$). This concludes  that the class of labelled SP graphs is subcritical.

Now we turn to the unlabelled case, which is technically more involved.
For each class $\cN\in\{\cD,\cS,\cP\}$ of networks,  the automorphisms have to fix each of the two poles $+$ and $-$,
and the cycle index sum $Z_{\cN}(s_1,s_2,s_3,\ldots)$ is defined as a sum of monomials over all automorphisms, as in Section~\ref{sec:def_series}. We define $N=N(y,z):=Z_{\cN}(y,f(z^2),f(z^3),\ldots)$.
For networks, the automorphisms have to fix each of the two poles $+$ and $-$. However, it is also useful
to consider networks up to exchanging the poles and their corresponding series $\oZ_{\cN}(s_1,s_2,s_3,\ldots)$
defined similarly as the cycle
index sum, but summing over the automorphisms exchanging $+$ and $-$ (instead of those fixing $+$ and $-$ as in
the series $Z_{\cN}$). Define the bivariate series $\oN=\oN(y,z):=\oZ_{\cN}(y,f(z^2),f(z^3),\ldots)$ for each class $\cN\in\{\cD,\cS,\cP\}$ of networks. For a bivariate series $f(y,z)$ and $k\geq 1$ define $f^{(k)}(y,z):=f(y^k,z^k)$. The decomposition for pole-fixing networks and pole-exchanging networks give rise to explicit systems for the cycle index sums. Under the specialization $(s_1=y,s_2=f(z^2),s_3=f(z^3),\ldots)$, these yield the two systems (where the argument $(y,z)$ of the series is omitted):
$${\mathrm{(PF)}}\ds
\left\{
\begin{array}{rcl}
\SPP\SPP\SPP
\ds D&=&1+S+P,\\
\ds S&=&y\cdot D\cdot (1+P),\\
\ds P&=&2\exp\left(\ds\sum\nolimits_{i\geq 1}\frac1{i}S^{(i)}\right)-S-2,
\end{array}
\right.
$$
and for the pole-exchanging networks
$$
{\mathrm{(PE)}}\ds
\left\{
\begin{array}{rcl}
\SPP\SPP\SPP\SPP\SPP\SPP
\ds \oD&=&1+\oS+\oP,\\
\ds \oS&=&D^{(2)}\cdot (y+f(z^2)(1+\oP)),\\
\ds \oP&=&2\exp\left(\ds\sum\nolimits_{k\geq 1}\frac{S^{(2k)}}{2k}+\frac{\oS^{(2k-1)}}{2k-1}\right)-\oS-2.
\end{array}
\right.
$$
The series $g(y,z):=Z_{\cB'}(y,f(z^2),f(z^3),\ldots)$ can be
expressed in terms of the series in the above two systems (this is done in~\cite{GLLW08}
using the dissymmetry theorem):
\begin{eqnarray*}
g(y,z) & = & \frac1{2}y^2(1+P) D+\frac1{2}f(z^2) D^{(2)}\cdot\left(1+y+y P^{(2)}+\oP\right)+\\
&&2y\exp\left(\sum\nolimits_{i\geq 1}\frac1{i}S^{(i)}\right)-2y-2yS-\frac1{2}yS^2-\frac1{2}yS^{(2)}-ySP.
\end{eqnarray*}
As in Lemma~\ref{lem:suf_crit_unl} we let $f(z):=\tcCp(z)$, $g(y,z):=Z_{\cB'}(y,f(z^2),f(z^3),\ldots)$, and  let $\rho$ be the radius of convergence of $f(z)$ and  $\tau=f(\rho)$. In addition, we let $\eta_D(z)$ and $\eta_{\oD}(z)$ be the singularity functions of $y\mapsto D(y,z)$ and $y\mapsto \oD(y,z)$.

\begin{lem}
Each of the series $D(s,z),S(s,z),P(s,z)$ admits a square-root expansion at $(\eta_D(\rho),\rho)$.
\end{lem}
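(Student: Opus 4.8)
The plan is to apply Theorem~\ref{Thsystem1} to the system $(\mathrm{PF})$, read as a system of functional equations in the primary variable $s$ (which plays the role of ``$y$'' in $(\mathrm{PF})$), with $z$ a secondary variable valued near $\rho$. First I would isolate the $i=1$ term of the last equation of $(\mathrm{PF})$ and rewrite the system as
\[
D=1+S+P,\qquad S=s\,D\,(1+P),\qquad P=2\exp\!\bigl(S+B(s,z)\bigr)-S-2,
\]
where $B(s,z):=\sum_{i\geq 2}\tfrac1i S(s^i,z^i)$ is to be treated as an analytic coefficient function. The structural hypotheses are then easy: the right-hand side is a positive system (its $D$-component at $(D,S,P)=\mathbf 0$ equals $1$, and the subtraction $-S-2$ in the $P$-equation is harmless at the level of formal power series), it is not affine in $(D,S,P)$ because of the term $s\,D\,P$, it genuinely involves $s$, and its dependency graph on $\{D,S,P\}$ is strongly connected (there are arrows $S\to D$, $P\to D$, $D\to S$, $P\to S$, $S\to P$), so the system is strongly recursive.

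To show the system is analytically well-founded at the valuation $z=\rho$ I would take as distinguished point $z_0:=\eta_D(\rho)$, the radius of convergence of $s\mapsto D(s,\rho)$, together with $\mathbf{y}_0:=(D_0,S_0,P_0)$, the value of the network solution at $z_0$ (for $z=\rho$). Since the system is positive, strongly recursive and non-affine in $(D,S,P)$, its solution stays finite at its radius of convergence, so $\mathbf{y}_0$ is a finite positive vector; that $(\mathbf{y}_0;z_0,\rho)$ satisfies $(\mathrm E)$ is immediate, that it satisfies $(\mathrm S)$ follows from the observation in Section~\ref{sec:systemequations} that a strongly recursive analytic solution that is singular, with finite limit, at an interior point of the region of convergence of the right-hand side must satisfy the singularity equation there, and the Perron--Frobenius eigenvalue of $\JacF$ at that point then equals $1$. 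The only remaining requirement is that $(\mathbf{y}_0;z_0,\rho)$ be an \emph{interior} point of the region of convergence of the right-hand side, that is, that $B(s,z)$ be analytic in a full neighbourhood of $(\eta_D(\rho),\rho)$, reaching past $s=\eta_D(\rho)$.

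Establishing this last point is the heart of the matter. Here one uses that $\rho\in(0,1)$ (the remark after Definition~\ref{def:sub_unl}) and that $\eta_D(\rho)<1$; the latter holds, for instance, because $D(s,\rho)=Z_{\cD}(s,f(\rho^2),f(\rho^3),\dots)$ dominates coefficient-wise the labelled network generating function, whose radius of convergence is already known to lie in $(0,1)$. Since $z\mapsto\eta_D(z)$ is non-increasing and equals the common radius of convergence of $D$, $S$, $P$ in their first argument, for $i\geq 2$ and $z$ near $\rho$ the term $S(s^i,z^i)$ occurring in $B$ is analytic in $s$ for $|s|$ up to essentially $\eta_D(\rho^i)^{1/i}\geq\eta_D(\rho)^{1/i}>\eta_D(\rho)$; as $S$ is divisible by $s$, these terms decay geometrically in $i$, so $B(s,z)=\sum_{i\geq 2}\tfrac1i S(s^i,z^i)$ converges locally uniformly on a neighbourhood of $(\eta_D(\rho),\rho)$ extending past $s=\eta_D(\rho)$ and hence is analytic there. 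I expect this bootstrapping step---checking that the ``$i\geq 2$'' correction introduces no singularity before $s=\eta_D(\rho)$---to be the main, if routine, obstacle. Granting it, $(\mathrm{PF})$ is analytically well-founded at $z=\rho$, and Theorem~\ref{Thsystem1} simultaneously produces a square-root expansion of $D(s,z)$, of $S(s,z)$ and of $P(s,z)$ around $(\eta_D(\rho),\rho)$, as claimed.
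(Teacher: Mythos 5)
Your high-level strategy agrees with the paper's: apply Theorem~\ref{Thsystem1} to the (PF) system with $s$ primary and $z$ secondary, treating the $i\ge 2$ part of the P\'olya exponential as an analytic coefficient. Your verification that $B(s,z)=\sum_{i\ge 2}\frac1i S(s^i,z^i)$ stays analytic across $(\eta_D(\rho),\rho)$ (via monotonicity of $\eta_D$, $\rho\in(0,1)$, and $\eta_D(\rho)<1$) is, if anything, more explicit than what the paper says. But there is a genuine gap preceding this step: you take for granted that $z=\rho$ is an \emph{admissible} valuation for (PF), i.e.\ that $D(s,\rho)$, $S(s,\rho)$, $P(s,\rho)$ are actual power series with finite coefficients and that $\eta_D(\rho)$ is a strictly positive number. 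This is condition (1) of Definition~\ref{Def3} and is not automatic: the substitution $s_j\mapsto f(\rho^j)$ produces, for each power of $s$, an infinite nonnegative sum whose convergence must be argued. Your bootstrapping inequalities (such as $\eta_D(\rho)^{1/i}>\eta_D(\rho)$) presuppose $\eta_D(\rho)\in(0,1)$; if $\eta_D(\rho)$ were $0$ or undefined, there would be no critical point to expand around and the argument collapses. Your comparison with the labelled network generating function only gives the \emph{upper} bound $\eta_D(\rho)<1$, and even that already presumes admissibility.

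The paper supplies exactly this missing ingredient: the combinatorial injection $\cZ^3\cdot\cD\subseteq\cB'$ of Figure~\ref{fig:netToVert} gives the coefficient-wise domination $u^3\,D(y,z)\preceq g(y,z)$, and since $g(f(\rho),\rho)=Z_{\cB'}(f(\rho),f(\rho^2),\ldots)\le f(\rho)<\infty$, one concludes $D(f(\rho),\rho)<\infty$, hence all coefficients of $D(s,\rho)$ are finite and $\eta_D(\rho)\ge f(\rho)>0$. Inserting this domination step at the start makes your proof complete and essentially equivalent to the paper's, with a somewhat more detailed treatment of the analyticity of the coefficient function $B$.
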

\begin{proof}
As $Z_{\cB'}(f(z),f(z^2),\ldots)$ counts derived connected SP graphs with a unique
block incident to the pointed vertex and $f(z)$ counts all rooted SP graphs, we have $g(f(\rho),\rho)=Z_{\cB'}(f(\rho),f(\rho^2),f(\rho^3),\ldots)\leq f(\rho)$.
In addition $f(\rho)<\infty$ since $f(z)$ is a solution of nonlinear functional equations.
Hence $g(f(\rho),\rho)<\infty$.
As $\cZ^3\cdot\cD\subseteq\cB'$ (see Figure~\ref{fig:netToVert}) we have
$$u^3\ \!D(y,z)\preceq g(y,z).$$
Hence, by domination, $D(f(\rho),\rho)<\infty$ and therefore  the singularity of $y\mapsto D(y,\rho)$ is larger than $f(\rho)$,
which implies in particular that $z=\rho$ is an admissible valuation.
The equation-system (PF) $\mathbf{F}(\mathbf{y};z)$ for $\mathbf{y}=(D,S,P,y)$ satisfies all conditions required in Theorem~\ref{Thsystem1}
(aperiodicity of $D$, strongly recursive system, analyticity of the functional system $\mathbf{F}(\mathbf{y};z)$ at the required
point). Therefore the series $D$, $S$, and $P$ admit singular expansions at $(\eta_D(\rho),\rho)$.
\end{proof}

\begin{lem}\label{lem:square_D}
If $\eta_D(\rho)<\eta_{\oD}(\rho)$ then
the series $\oD(s,z),\oS(s,z),\oP(s,z)$ are analytic at $(\eta_D(\rho),\rho)$, if not~\footnote{This case
is very unlikely to happen, but discarding it would require some numeric computation of the functions $\eta_D$
and $\eta_{\oD}$.}, then these series admit a square-root
expansion at $(\eta_{\oD}(\rho),\rho)$.
\end{lem}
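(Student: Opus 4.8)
The plan is to treat the pole-exchanging system (PE) as a functional system in the unknowns $(\oD,\oS,\oP)$ with $D,S,P$ already known (and analytic at $(\eta_D(\rho),\rho)$ by the previous lemma), and to split into the two cases according to whether the singularity of the $\oD$-subsystem lies strictly before or exactly at that of the main system. First I would observe that in (PE) the series $D^{(2)}(y,z)=D(y^2,z^2)$, $S^{(2k)}$ and $f(z^2)$ all appear only with ``doubled'' arguments: evaluated near $(y,z)=(\eta_D(\rho),\rho)$ their arguments are of the form $(\eta_D(\rho)^2,\rho^2)$, which is well inside the region of analyticity since $\eta_D(\rho)<1$ (here $\eta_D(\rho)\le \tau=f(\rho)$? — more precisely $f(\rho)<\eta_D(\rho)$ and $\rho<1$, so $\eta_D(\rho)^2$ stays comfortably below the relevant singularities, using monotonicity and the coefficient-domination $D\preceq g/u^3$). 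Hence the functional system for $(\oD,\oS,\oP)$ has all its ``coefficient'' series analytic in a neighbourhood of $(\eta_D(\rho),\rho)$; the only possible source of non-analyticity is the self-referential part of the system itself.

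Next I would run the dichotomy. In the generic case $\eta_D(\rho)<\eta_{\oD}(\rho)$: by definition of $\eta_{\oD}$ the series $\oD(y,z)$ (and hence $\oS,\oP$, which are finite polynomials/compositions in $\oD$, $D^{(2)}$, etc.) converge at $(\eta_D(\rho),\rho)$ with all data analytic there, so they are analytic at that point — this is immediate once one checks $(\eta_D(\rho),\rho)$ is an inner point of the domain of convergence, which follows from $\eta_D(\rho)<\eta_{\oD}(\rho)$ together with the analyticity of $D,S,P$ just established. In the degenerate case $\eta_D(\rho)\ge \eta_{\oD}(\rho)$: one would want to apply Theorem~\ref{Thsystem1} to the subsystem (PE). I would verify its hypotheses — the system $(\oD,\oS,\oP)=\mathbf{F}(\oD,\oS,\oP;y,z)$ with $(y,z)$ playing the role of external variables and $s=y$ the primary variable is positive (it has the constant-term $1$ in the $\oD$-equation once the external series are plugged in, giving $\mathbf{F}(\mathbf 0;\cdot)\ne\mathbf 0$), strongly recursive (the dependency graph on $\{\oD,\oS,\oP\}$ is strongly connected: $\oD$ depends on $\oS,\oP$; $\oS$ depends on $\oP$; $\oP$ depends on $\oS$; and $\oD$ re-enters via $\oS=D^{(2)}(y+f(z^2)(1+\oP))$ and $\oD=1+\oS+\oP$), nonlinear in the unknowns (the $\exp$ in the $\oP$-equation), and analytically well-founded at the valuation $z=\rho$ because the external series are analytic and $\eta_{\oD}(\rho)$ is, by assumption, the relevant finite radius where the Jacobian condition kicks in. Theorem~\ref{Thsystem1} then gives $\oD,\oS,\oP$ a square-root singular expansion at $(\eta_{\oD}(\rho),\rho)$, and since $\eta_{\oD}(\rho)\le\eta_D(\rho)$ this is the correct statement.

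The main obstacle I anticipate is the bookkeeping needed to certify that in the degenerate case the point $(\eta_{\oD}(\rho),\rho)$ is actually an \emph{inner} point of the region of convergence of the functional system $\mathbf{F}$, i.e.\ that the external series ($D^{(2)}$, $S^{(2k)}$, $\oS^{(2k-1)}$ occurring in the sums, and $f(z^2)$) are all analytic there and not themselves causing a singularity ``from outside'' — this is where the inequalities $f(\rho)<\eta_D(\rho)$, $\rho<1$, and the various coefficient-dominations $D\preceq g(\cdot)/u^3$, $\oN\preceq N$ etc.\ have to be assembled carefully. Once that ``analyticity of the environment'' point is nailed down, both branches reduce to an invocation of already-proved results (trivial convergence in one case, Theorem~\ref{Thsystem1} in the other), which is why the authors relegate the degenerate case to a footnote: it is not hard, merely tedious, and disposing of it rigorously would need numerics on $\eta_D,\eta_{\oD}$ that are not otherwise required.
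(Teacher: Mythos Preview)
Your approach is essentially the paper's: show the ``environment'' series in (PE) (namely $D^{(2)}$, $S^{(2k)}$, $f(z^2)$) are analytic at the target point because their arguments are squared and $\rho<1$, then split into the trivial case and the case where Theorem~\ref{Thsystem1} applies to (PE). The paper adds one ingredient you gloss over: it first uses the injection of Figure~\ref{fig:netToVert} to get the domination $z^2\oD(y,z)\preceq g(y,z)$, whence $\oD(f(\rho),\rho)<\infty$ and $\eta_{\oD}(\rho)\ge f(\rho)$, so that $z=\rho$ is an admissible valuation for the (PE) system; without this you have not justified that the singularity system has a solution in the right range.

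One concrete slip: your strong-connectivity check is wrong. In (PE), $\oS$ depends on $\oP$ and $\oP$ depends on $\oS$, but neither depends on $\oD$; the equation $\oD=1+\oS+\oP$ is a one-way dependence, and your sentence ``$\oD$ re-enters via $\oS=D^{(2)}(y+f(z^2)(1+\oP))$'' does not exhibit any occurrence of $\oD$ on the right-hand side. The fix is immediate---apply Theorem~\ref{Thsystem1} to the strongly connected subsystem $(\oS,\oP)$ and then read off the square-root expansion of $\oD=1+\oS+\oP$---but as written your verification of the hypotheses is incorrect. The paper itself is terse on this point and does not spell out the dependency graph, simply asserting that Theorem~\ref{Thsystem1} applies.
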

\begin{proof}
Note that Figure~\ref{fig:netToVert} yields an injection from the
pole-exchanging automorphisms of $\cD$ to the automorphisms of $\cB'$.
As a consequence, $s_2\oZ_{\cD}\preceq Z_{\cB'}$
so that
$$z^2\oD(y,y)\preceq g(y,z).$$
Hence, $Z_{\cB'}(f(\rho),f(\rho^2),f(\rho^3),\ldots)=\tau<\infty$
implies that $\oD(f(\rho),\rho)<\infty$. Therefore $\eta_{\oD}(\rho)\geq f(\rho)$
so that $z=\rho$ is an admissible valuation for $\oD(y,z)$.
Note that the functional system  (PE) $\mathbf{G}(\mathbf{y};z)$ with $\mathbf{y}=(\oD,\oS,\oP,y)$ is clearly analytic at a given point $(\oD,\oS,\oP,y;z)$ (where $\oD$, $\oS$, $\oP$
are seen as independent variables) if and only if $D$ is analytic at $(y^2,z^2)$
and $f$ is analytic at $z^2$. Since $\rho<1$ and $\eta_{\cD}(z)$ is decreasing in $z$,
$\mathbf{G}$ is analytic at any point such that $|y|\leq \eta_{D}(\rho)$
and $|z|\leq\rho$.
Therefore, the only cause of singularity for $\oD$ in this domain is a branch point (i.e.,
a solution of the singularity system). From Theorem~\ref{Thsystem1}, we conclude that in such a situation, $\oD$, $\oS$, and $\oP$
 have a square-root expansion at $(\eta_{\oD}(\rho),\rho)$.
\end{proof}
\begin{lem}\label{lem:final_SP}
The class of unlabelled series-parallel graphs is subcritical.
\end{lem}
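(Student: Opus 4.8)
The plan is to apply the sufficient condition for subcriticality of Lemma~\ref{lem:suf_crit_unl} to the class $\tcG$ of unlabelled series-parallel graphs, keeping the notation of Section~\ref{sec_sp_unl}: $f(z)=\tcCp(z)$, $g(y,z)=Z_{\cB'}(y,f(z^2),f(z^3),\ldots)$, $\rho$ the radius of convergence of $f$, $\tau=f(\rho)$, and $\eta_D(z),\eta_{\oD}(z)$ the singularity functions of $y\mapsto D(y,z)$ and $y\mapsto\oD(y,z)$. Criterion~(1) of Lemma~\ref{lem:suf_crit_unl}, namely $[z^n]\tcCp(z)\le c\,\gamma^n$, is the easy point: series-parallel graphs are planar (of tree-width at most $2$), so the number of unlabelled connected series-parallel graphs on $n$ vertices is at most exponential, hence so is $[z^n]\tcCp(z)$ up to the factor $n$ for the choice of root. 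In particular $\rho>0$, and by the remark after Definition~\ref{def:sub_unl} even $\rho\in(0,1)$; since $f(z)\succeq z$ we get $\rho\le\tau$, and $\tau<\eta_D(\rho)$ by the domination $z^3D(y,z)\preceq g(y,z)$ already used in the proof of the lemma stating that $D,S,P$ have a square-root expansion at $(\eta_D(\rho),\rho)$.

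For criteria~(2) and~(3) I would first identify the singularity function $\eta(z)$ of $y\mapsto g(y,z)$ near $z=\rho$. Inspecting the explicit expression for $g(y,z)$ obtained from the systems $(\mathrm{PF})$ and $(\mathrm{PE})$, the only sources of non-analyticity in $y$ for $z$ in a real neighbourhood of $\rho$ are the series $D,S,P$, singular at $y=\eta_D(z)$, and the series $\oD,\oS,\oP$, singular at $y=\eta_{\oD}(z)$; the substituted terms $D^{(2)},S^{(2)},P^{(2)}$ stay analytic on the relevant range of $y$ (their singularities lie strictly farther out since $\eta_D$ is decreasing and $\rho<1$), and $\exp$ is entire. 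Thus $\eta(z)=\min(\eta_D(z),\eta_{\oD}(z))$ near $\rho$, and since $\eta_D,\eta_{\oD}$ are the analytic (hence continuous) singularity functions produced by Theorem~\ref{Thsystem1} in the preceding two lemmas, $\eta(z)$ is continuous at $\rho$: this is criterion~(3). For criterion~(2) I would use the coefficient-wise dominations established in the proofs of those two lemmas, $z^3D(y,z)\preceq g(y,z)$ (from $\cZ^3\cdot\cD\subseteq\cB'$, Figure~\ref{fig:netToVert}) and $z^2\oD(y,z)\preceq g(y,z)$ (from the induced injection on pole-exchanging automorphisms). Setting $z=\rho$ preserves these dominations for the resulting series in $y$, so $g_y(y,\rho)\ge\rho^3D_y(y,\rho)$ and $g_y(y,\rho)\ge\rho^2\oD_y(y,\rho)$ for real $y<\eta(\rho)$. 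Now $\eta(\rho)$ equals $\eta_D(\rho)$ or $\eta_{\oD}(\rho)$: in the first case $D$ has a square-root expansion at $(\eta_D(\rho),\rho)$, so $D_y(y,\rho)\to+\infty$ as $y\to\eta_D(\rho)^-$; in the second case necessarily $\eta_{\oD}(\rho)\le\eta_D(\rho)$, i.e.\ the ``if not'' branch of Lemma~\ref{lem:square_D}, so $\oD$ has a square-root expansion at $(\eta_{\oD}(\rho),\rho)$ and $\oD_y(y,\rho)\to+\infty$ as $y\to\eta_{\oD}(\rho)^-$. Either way $g_y(y,\rho)\to+\infty$ as $y\to\eta(\rho)^-$, which is criterion~(2). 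The point of invoking the coefficient-wise domination here is precisely to stop the negative terms in the formula for $g$ from cancelling this divergence; this is the main subtlety in (2)--(3).

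It remains to check criterion~(4): $q(z):=Z_{\cC}(0,z^2,z^3,\ldots)$ has radius of convergence strictly larger than $\rho$, and this is the most computational step. Exactly as $g(y,z)$ itself was produced, one can write $Z_{\cC}$ through the block-decomposition together with the dissymmetry theorem (as in~\cite{GLLW08}), obtaining an explicit formula for $q(z)$ in terms of $Z_{\cB}(0,f(z^2),f(z^3),\ldots)$ and the network series $D,\oD,S,\oS,P,\oP$ evaluated at arguments $(f(z^i),z^i)$; because the first cycle-index variable is set to $0$, only indices $i\ge 2$ occur. Since $\rho<1$ we have $\rho^i<\rho$ for $i\ge 2$, hence $f(\rho^i)\le f(\rho)=\tau<\eta_D(\rho)\le\eta_D(\rho^i)$ (and similarly with $\eta_{\oD}$), so every constituent is analytic at $z=\rho$; therefore $q(z)$ is analytic at $\rho$, giving criterion~(4). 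Lemma~\ref{lem:suf_crit_unl} then applies and shows that $\tcG$ is subcritical. The genuine obstacles are the explicit computation of $q$ from the network grammar and the no-cancellation point in criterion~(2), both handled as above; alternatively one could verify Definition~\ref{def:sub_unl} directly, using the two preceding lemmas to see that $g(y,z)$ is analytic (indeed square-root-singular) at $(f(\rho),\rho)$ while $A(z)$ and $q(z)$ are analytic at $\rho$.
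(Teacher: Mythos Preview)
Your proof is correct and follows essentially the same route as the paper's: verify the four criteria of Lemma~\ref{lem:suf_crit_unl}, using the expression of $g(y,z)$ in terms of the network series to identify $\eta(z)=\min\{\eta_D(z),\eta_{\oD}(z)\}$, and use the coefficient-wise dominations coming from Figure~\ref{fig:netToVert} together with the square-root expansions of $D$ and $\oD$ to get $g_y(y,\rho)\to\infty$.

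The one place where the paper is sharper is criterion~(4). You propose to unfold $Z_{\cC}$ via the block decomposition and dissymmetry and then argue that, with $s_1=0$, only indices $i\ge 2$ survive so that every constituent is analytic at $\rho$. The paper short-circuits this: it invokes the combinatorial identity
\[
q(z)=Z_{\cC}(0,z^2,z^3,\ldots)=Z_{\cB}\bigl(0,f(z^2),f(z^3),\ldots\bigr)=K(0,z),
\]
(an automorphism of a connected graph with no fixed vertex fixes a unique block, and has no fixed vertex on that block), and then uses the dissymmetry expression only for $K(y,z)=Z_{\cB}(y,f(z^2),\ldots)$ to conclude $\eta_K=\eta$ and hence $K(0,z)$ is analytic at $\rho$. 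Your argument reaches the same conclusion but the identity $q(z)=K(0,z)$ is the clean step you are implicitly using; it is worth stating it explicitly rather than leaving it inside ``obtaining an explicit formula for $q(z)$ in terms of $Z_{\cB}(0,f(z^2),\ldots)$ and the network series''. Also, a small slip: the domination from Figure~\ref{fig:netToVert} is $y^3 D(y,z)\preceq g(y,z)$ (the three added atoms contribute to $s_1=y$, not to $z$); this does not affect your divergence argument since the prefactor is still positive at $y=\eta(\rho)$.
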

\begin{proof}
We need to check that all four criteria in Lemma~\ref{lem:suf_crit_unl} are satisfied.
Easy upper and lower bounds imply that $\rho\in(0,1)$, so the criterion (1) is satisfied.
Let $\eta(z)$ the singularity function of $y\mapsto g(y,z)$.
Expression for $g(y,z)$ in terms of the series of networks (pole-fixing and pole-exchanging),
ensures that $\eta(z)=\mathrm{min}\{\eta_{D}(z),\eta_{\oD}(z)\}$ for $z>0$.
Hence $\eta(z)$, as the minimum of two continuous functions, is also continuous at $\rho$, so the criterion (3)  is verified. If $\eta(\rho)=\eta_D(\rho)$ (the most likely case), then $\lim_{y\to\eta(\rho)^-}g_y(y,\rho)=\infty$ because $y^3D(y,z)\preceq g(y,z)$ and because the square-root expansion of $D(y,z)$ at $(\eta(\rho),\rho)$ yields $\lim D_y(y,\rho)=\infty$ as $y\to\eta(\rho)$. If $\eta(\rho)=\eta_{\oD}(\rho)$, then applying Lemma~\ref{lem:square_D} $\oD(y,z)$ has a square-root expansion at $(\eta(\rho),\rho)$. Again we have $\lim g_y(y,\rho)=\infty$ as $y\to\eta(\rho)$ because $z^2\oD(y,z)\preceq g(y,z)$ and because the square-root expansion of $\oD(y,z)$ at $(\eta(\rho),\rho)$ yields $\lim\oD_y(y,\rho)=\infty$ as $y\to\eta(\rho)$. Thus the criterion (2) is verified.

It remains to check the criterion (4).
From the work in~\cite{ChFuKaSh07,GLLW08} one can extract an expression for $K(y,z):=Z_{\cB}(y,f(z^2),f(z^3),\ldots)$
(longer than the one of $g(y,z)$, but of the same aspect) in terms of the series $\{D,S,P,\oD,\oS,\oP\}$.
This ensures that the singularity function $\eta_H(z)$ of $y\mapsto H(y,z)$ satisfies
$\eta_H(z)=\mathrm{min}\{\eta_D(z),\eta_{\oD}(z)\}=\eta(z)$.
Since $\eta_H(\rho)\geq f(\rho)$ and since $\eta_H$ is continuous at $\rho$,
we have $\eta_H(z)>0$ for $z$ around $\rho$. Hence $H(0,z)$ is analytic at $\rho$.
In addition one can show that $Z_{\cC}(0,z^2,z^3,\ldots)=Z_{\cB}(0,f(z^2),f(z^3),\ldots)$, see~\cite{GLLW08} (a combinatorial interpretation is that if there is no fixed vertex
in an automorphism $\sigma$ of a connected graph, then there is a unique 2-connected block $B$ fixed by
$\sigma$, and the
automorphism induced on $B$ has no fixed vertex as well).
Hence $q(z)=Q(0,z)=H(0,z)$,
 which ensures that $q(z)$ is analytic at $\rho$.
\end{proof}

\subsection{Graphs defined by their $3$-connected components.}\label{sub:3-connex}

 The same
arguments used in the enumeration of the unlabelled SP-graphs can be
adapted to get similar results for families of graphs which are
defined by a \emph{finite} set of $3$-connected components
(see~\cite{GiNoRu09} for a proper definition of these families). In
these families, the strategy used is the same as in SP-graphs: we
study the enumeration of networks in order to get the asymptotic
counting of the graphs with a lower level of connectivity. The main
difference is that there is an additional line in the equation-system
for networks, which involves the \emph{Walsh polynomial}
(roughly speaking, the cycle index sum associated to the finite
family of $3$-connected components; see~\cite{Gagarin200751, GLLW08}
 for a
detailed definition). The Walsh polynomial has two kinds
of variables: $e_1,\ldots,e_r$ and $\overline{e_1},\ldots,\overline{e_k}$.
In the equation-system for networks, these variables are to be substituted
by $D^{(1)},\ldots,D^{(r)}$ and by $\overline{D}^{(1)},\ldots,\overline{D}^{(k)}$,
respectively (in SP-graphs this polynomial vanishes,
because SP-graphs do not have $3$-connected components). Since
the Walsh polynomial is an entire function in all its variables and has
positive coefficients, the system remains positive and the
argument of Lemmas~\ref{lem:square_D}, \ref{lem:final_SP}
can be
adapted to this general setting in order to assure a square-root
singularity for the counting series associated to networks, and the rest of the results can be easily adapted to this general framework.

\section{Limit laws}\label{sec:limitlaws}

In this section we study the limit distribution of a graph parameter
defined on a uniformly distributed random graph on $n$ vertices.
More concretely, let $X_n$ be the associated random variable. We
show that all the parameters under study invariably converges in distribution to a normal
limit distribution in the form
\[
\frac{X_n - \mathbb{E}\, X_n}{\sqrt{\mathbb{V}{\rm{ar}}\, X_n}} \to N(0,1),
\]
where $\mathbb{E}\, X_n = \mu n + O(1) \quad\mbox{and}\quad
\mathbb{V}{\rm{ar}}\, X_n = \sigma^2 n + O(1)$ for computable constants $\mu,\sigma^2>0$. The main results used
in this section are Theorem $2.23$ of~\cite{Drmota} (adapted
in this paper as Theorem~\ref{ThcombCLT1}).
In order to decide whether $\sigma^2> 0$ we can either calculate it directly
or use Lemma~\ref{Lepositive}. For example, we can check the conditions
of Lemma~\ref{Lepositive} with the help of small values of $n,m,k$.
This applies for all kinds of graphs and parameters we consider here.
However, in any case (if $\sigma^2 = 0$ or $\sigma^2 > 0$)
these parameters are concentrated around its expected value.

The parameters discussed in this section are the
number of edges, the number of blocks and the number of cut-vertices. All these parameters can be encoded in the generating
function framework of the form
$$\cG(z,v)=\exp(\cC(z,v)),\,\,\cCp(z,v)=F(\cCp(z,v);z,v)$$
for the labelled case, and
$$\tcG(z,v)=\exp\left(\sum\nolimits_{k\ge 1}\tcC(z^k,v^k)/k\right),\,\,\tcCp(z,v)=F(\tcCp(z,v);z,v)$$
for the unlabelled case. In both cases, $z$ marks vertices and $v$
marks the parameter under study. The singularity curve $z=\rho(v)$ for the GF associated to general graphs is the same as the
one for connected graphs, because both functions
$\exp$ and $\expb$ are analytic functions. As a consequence, the same
limit law holds for both general and connected graphs (hence we can restrict our study to connected graphs). In order to study these limit laws we
analyse the characteristic system
\begin{equation}\label{eq:system-parameter}
y=F(y;z,v),\,\,\,1=\frac{\partial F}{\partial y}(y;z,v).
\end{equation}
Observe that we recover the univariate system
$y=F(y;z),\,\,\,1=\partial F/\partial y(y;z)$ by
evaluating~\eqref{eq:system-parameter} at $v=1$. For the classes
under study we have shown that this univariate system of equations
is analytically well-founded (in the sense of
Definition~\ref{Def3}), hence by Theorem~\ref{Thsystem1}, $y(z,1)$
is the unique solution and has a square-root expansion around its
smallest real singularity. This expansion can be extended to a real
neighbourhood of $v=1$ with expression $y(z,v)$.

In this section we obtain the exact parameters for the labelled
case. Unfortunately in the unlabelled framework there are no explicit formulas for the expectation and the variance, hence we
content ourselves with the normal condition derived from
Theorem~\ref{ThcombCLT1} combined with Lemma~\ref{Lepositive}.
Finally, local limit laws are also discussed for these parameters.

\subsection{The labelled case}

The labelled case have been partially treated in~\cite{GiNoRu09}, where the authors developed a case-by-case analysis.
In this section we rediscover these results using a different and more general argument. Recall that
$\tau=\cCp(\rho)$ is the unique solution of the equation $y \cB''(y)=1$ on the
region where $\cB(y)$ is analytic, and $\rho$ is the radius of
convergence of $\cC(z)$.

We use a secondary variable $v$ to mark edges. For a graph class
$\cG$ we write $\cG=\cup_{n,m}\cG_{n,m}$, where $n$ denotes the
number of vertices and $m$ denotes the number of edges. We define
the bivariate generating function as
$$
\cG(z,v)=\sum_{n,m}\frac1{n!}|\cG_{n,m}|z^n v^m.
$$
The series is exponential according to the (labelled) vertices
and ordinary according to the (unlabelled) edges. A similar
definition can be stated for both connected and $2$-connected graphs
in $\cG$ ($\cC(z,v)$ and $\cB(z,v)$ respectively). For a
block-stable graph class $\cG$, the block-decomposition translates
to
$$\cCp(z,v)=z\ \!\exp(\cB'(\cCp(z,v),v)),$$
where $\cB'(z,v)$ denotes the derivative of $\cB(z,v)$ with respect to the first variable.
Observe that we need to deal with the bivariate
GF associated to 2-connected graphs of the class. All subcritical
classes under study are subclasses of planar graphs, hence the
number of edges is linear. Consequently the subcritical condition is stable under a slight
variation of $v$ around $v=1$. We denote by $\cB_v'(z,v)=\frac{\partial}{\partial v}\cB'(z,v)$, the
derivative of $\cB'(z,v)$ with respect to the second variable.

The next parameter considered is the number of blocks, which is coded by the following equation
\begin{equation*}
\cCp(z,u) = z \ \!\exp\left(u \cB'(\cCp(z,u))\right),
\end{equation*}
where the secondary variable $u$ marks the number of blocks. Finally, for the number of  separating
vertices, also called \emph{cut-vertices}, we consider the bivariate GF
\begin{equation*}
\cCp(z,w)=z\ \!w\
\!(\exp\left(\cB'\left(\cCp(z,w)\right)\right)-1)+z.
\end{equation*}
Observe however that $w$ does not mark \emph{exactly} the
number of separating vertices, as the root-vertex is always considered
as a cut-vertex. However, the limit law for the number of cut-vertices does not depend on the behaviour of a single vertex.

All the results are contained in Table~\ref{tab:constants}. Despite the methodology used here (direct application of Theorem~\ref{ThcombCLT1}) is different from the one of~\cite{GiNoRu09},  the parameters we
obtain are the same.
{\renewcommand{\arraystretch}{1.2}
\begin{table}[htb]
$$
\begin{tabular}{|c|c|c|c|}
  \hline
  Parameter & $F(y;z,v)$ & $\mu$ & $\sigma^2$ \\ \hline\hline
   Edges & $z\exp(\cB'(y,v))$ & $\cB_v'(\tau,1)$ & $1-\frac{(\tau\cB_{v}''(\tau,1))^2}{1+\tau^2
\cB'''(\tau)}$ \\
     Blocks & $z\exp(v\cB'(y))$ & $\log\left(\frac{\tau}{\rho}\right )$ & $\log\left(\frac{\tau}{\rho}\right)-{1 \over 1+\tau^2 \cB'''(\tau)}$ \\
    Cut-vertices & $zv\left(\exp(\cB'(y))-1\right)+z$ & $\left(1- {\rho \over \tau}\right)$ & $\left(\frac{\rho}{\tau}\right)^2\left(\frac{\tau}{\rho}-1-\frac{1}{1+\tau^2\cB'''(\tau)}\right)$ \\
  \hline
\end{tabular}
$$\bigskip
\caption{Parameters with the corresponding value of $\mu$ and
$\sigma^2$.} \label{tab:constants}
\end{table}
}

\subsection{The unlabelled case}
The same arguments used in the labelled framework work for GFs in the unlabelled framework, and Theorem~\ref{ThcombCLT1} assures asymptotic normal limit
distributions (with a non-vanishing variance) for these parameters. However, in this setting, expressions for the derivatives of $F$ are much involved, and consequently expressions for $\mu$ and $\sigma$ are complex. We cannot obtain \emph{closed} expressions (as the ones in
Table~\ref{tab:constants}) for both the expectation and the variance of the resulting random variables.

Using the techniques introduced in Section~\ref{sect: constant}
one can find numerical approximations  for the derivatives of the function $F(y;z,v)$. Such computations are conducted in~\cite{BoFuKaVi07b} for the number of edges in random unlabelled outerplanar graphs.

\subsection{Local limit laws}
In addition to the central limit laws described above, (much stronger) local limit laws hold for the parameters under study, either in
the labelled or the unlabelled framework. The main results used here are~\eqref{eqlocalCLT} and the
asymptotic estimates for both $[z^n]\cCp(z)$ and $[z^n]\tcCp(z)$. For conciseness, we state the result
for \emph{connected} graphs, but a similar result is also valid for
\emph{general} graphs.

\begin{thm}Let $\cG$ be a subcritical graph class
with the property that $[z^n] \cC(z) > 0$ for $n\ge n_0$.
 Let $X_n$ be a random variable for a graph parameter studied above (the number of edges, blocks, or cut-vertices) defined on a random uniformly
distributed connected graph of $\cG$ on $n$ vertices and
assume that $\sigma^2> 0$.
Let $m=\mu n+O(n^{1/2})$.
Then
\begin{equation*}
\mathbb{P}\left(X_n=m\right)
= \frac 1{\sqrt{2\pi  n}\sigma} e^{-(m-\mu n)^2/(2 \sigma^2
n)}\left(1+O(n^{-1/2})\right).
\end{equation*}
\end{thm}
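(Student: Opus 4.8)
The plan is to present each parameter as the $v$-marking of a bivariate functional equation for \emph{rooted} connected graphs, feed that equation into the bivariate transfer \eqref{eqLetransfer3} of Section~\ref{sec:transfer} and its localized Gaussian form \eqref{eqlocalCLT}, and divide by the univariate estimate for the denominator supplied by the square-root expansion of Section~\ref{sec:singularexpansion}.

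First I would reduce to $\cCp(z,v)$. By Section~\ref{sec:limitlaws} each of the three parameters is carried by a series satisfying $\cCp(z,v)=F(\cCp(z,v);z,v)$ with $F$ as in Table~\ref{tab:constants}; since rooting multiplies every coefficient by $n$ we have $\mathbb P(X_n=m)=[z^nv^m]\cC(z,v)/[z^n]\cC(z,1)=[z^nv^m]\cCp(z,v)/[z^n]\cCp(z,1)$, and $\cCp(z,v)$ has a genuine square-root (rather than order-$3/2$) singularity, so it falls directly under \eqref{eqLetransfer3}. For edges and blocks $v$ marks the parameter exactly; for cut-vertices the root is always counted, a bounded overcount which one removes either by composing the over-counting series $\cCp(z,w)$ with the map $u\mapsto z+z\cB'(u)+zw(\exp(\cB'(u))-1-\cB'(u))$ — an analytic transform whose $u$-derivative equals $1\neq0$ at the singular point $u=\tau$, hence with the same $\rho(w)$, $\mu$, $\sigma^2$ — or by a direct argument that a bounded shift is invisible to a local limit theorem on a window of width $O(\sqrt n)$. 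The underlying equation is strongly recursive and, by subcriticality of $\cG$, analytically well-founded at $v=1$ (Definition~\ref{Def3}); since all classes at hand are planar the parameter is at most linear in $n$, so the stability remark after Definition~\ref{def:sub_unl} keeps it analytically well-founded for real $v$ near $1$, and Theorem~\ref{Thsystem1} then yields a square-root expansion $\cCp(z,v)=a(z,v)-b(z,v)\sqrt{1-z/\rho(v)}$ around $(\rho(1),1)$ together with — using $[z^n]\cCp(z,1)=n[z^n]\cC(z)>0$ for $n\ge n_0$ — a $\Delta$-continuation uniform for $v$ near $1$.

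Next I would run the bivariate transfer. The hypothesis $\sigma^2>0$ is precisely $\mu'(1)=\sigma(1)^2>0$ (Theorem~\ref{ThcombCLT1} together with the discussion of $\mu(v)=-v\rho'(v)/\rho(v)$), so $\mu$ is regular near $v=1$ and $\nu=\mu^{-1}$, $\sigma(v)=\sqrt{v\mu'(v)}$ are available. \textbf{The step I expect to be the main obstacle} is to upgrade the $\Delta$-continuation from a neighbourhood of $v=1$ to the whole unit circle, i.e.\ to show that $z_0=\rho(1)$ is the \emph{only} singularity of $z\mapsto\cCp(z,v)$ on $|z|=z_0$ for every $v$ with $|v|=1$; this is exactly the positivity/aperiodicity hypothesis that turns the central limit theorem of Theorem~\ref{ThcombCLT1} into a \emph{local} one. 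I would handle it via the determinant criterion recorded in the remark following the proof of Lemma~\ref{Lepositive}: it suffices to produce three triples $(n_j,m_j-1,k_j)$ with $a_{n_j,m_j,k_j}\neq0$ in $F(y;z,v)=\sum a_{n,m,k}z^ny^mv^k$ of determinant $\pm1$, which is forced by the two hypotheses — $[z^n]\cC(z)>0$ for large $n$ excludes periodicity in the vertex count and $\sigma^2>0$ forces genuine joint variation of (vertex count, parameter) — and for the concrete classes comes from a couple of small blocks (e.g.\ the link graph and a triangle give three such monomials in $z\exp(\cB'(y,v))$). With $\mu$ regular and this uniqueness in hand, \eqref{eqLetransfer3} gives $[z^nv^m]\cCp(z,v)$ uniformly for $m/n$ near $\mu$, and its specialization \eqref{eqlocalCLT} yields the Gaussian shape for $m=\mu n+O(\sqrt n)$; on this window the Gaussian factor is bounded, so replacing $\rho(\nu(m/n))^{-n}\nu(m/n)^{-m}$ by $\rho(1)^{-n}e^{-(m-\mu n)^2/(2\sigma^2n)}$ and the analytic factors by their values at $v=1$ costs only $O(n^{-1/2})$ relatively.

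Finally, dividing by $[z^n]\cCp(z,1)\sim c\,n^{-3/2}\rho(1)^{-n}$ from the square-root transfer of Section~\ref{sec:singularexpansion} collapses every prefactor and the exponential $\rho(1)^{-n}$, leaving
\[
\mathbb P(X_n=m)=\frac{1}{\sqrt{2\pi\sigma^2 n}}\,e^{-(m-\mu n)^2/(2\sigma^2 n)}\bigl(1+O(n^{-1/2})\bigr),
\]
the asserted local limit law. The unlabelled case is verbatim the same, now starting from $\tcCp(z,v)=F(\tcCp(z,v);z,v)$ (with $\cB'(y,v)$ replaced by the relevant cycle-index series, as in Sections~\ref{sec:sub_unlabelled} and~\ref{sec:limitlaws}), which by Lemma~\ref{lem:CpOGF} again has a square-root expansion; only the values of $\mu$ and $\sigma^2$ change, and there they must be computed numerically.
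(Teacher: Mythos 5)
Your argument follows the paper's own one-line recipe for this theorem --- localize the bivariate transfer \eqref{eqLetransfer3} for $\cCp(z,v)$ near the saddle $\nu(m/n)=1$ and divide by the univariate square-root estimate for $[z^n]\cCp(z,1)$ --- but spells out the steps the paper leaves implicit; in particular you correctly flag and handle the aperiodicity needed to upgrade the CLT to a local law, via the determinant criterion recorded after the proof of Lemma~\ref{Lepositive}. Three remarks are in order. First, the normalization you obtain, $\frac{1}{\sqrt{2\pi\sigma^2 n}}$, is what actually falls out of \eqref{eqLetransfer3} divided by $[z^n]\cCp(z,1)\sim \frac{b}{2\sqrt\pi}\,n^{-3/2}\rho^{-n}$, and it is the density of $N(\mu n,\sigma^2 n)$; the $\frac{1}{2\pi\sigma^2 n}$ printed in the theorem cannot be right (nor does the prefactor in \eqref{eqlocalCLT} agree with a direct specialization of \eqref{eqLetransfer3}), so your computation effectively corrects a misprint. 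Second, in the unlabelled case the probability generating function of $X_n$ over uniformly chosen unlabelled connected graphs is $[z^nv^m]\,\tcC(z,v)/[z^n]\,\tcC(z,1)$, \emph{not} the pointed ratio, because $n\widetilde{g_n}\neq\widetilde{g'_n}$ (as the paper itself notes in Section~\ref{sec:degdis}); your argument is saved because $\tcC(z,v)$ inherits from $\tcCp(z,v)$ an order-$3/2$ singular expansion with the same $\rho(v)$, $\mu$, $\sigma^2$, and the remark following \eqref{eqLetransfer3} extends the bivariate transfer to order $3/2$ --- but you should route the unlabelled transfer through $\tcC$ rather than $\tcCp$, after which the extra power of $n$ cancels in the ratio just as before. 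Third, your bounded-shift argument for the root overcount in the cut-vertex series is the clean path; the re-composition map is also valid (and your check that its $u$-derivative at $\tau$ equals $1$, in particular $\neq 0$, is correct), but it is heavier than necessary.
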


\begin{proof} In all cases under study assumptions stated in the remark following the proof of Lemma~\ref{Lepositive} holds. The result is obtained by dividing the estimate in~\eqref{eqlocalCLT} by either $[z^n]\cCp(z)$ or $[z^n]\tcCp(z)$.
\end{proof}

\section{Degree distribution}\label{sec:degdis}
In this section we discuss the degree distribution of graphs of a subcritical graph family. We denote by $X_n^k$ and $\widetilde{X}_n^k$ the random variables which count the number of vertices of degree $k$ in a randomly chosen member of size $n$ of the family $\mc{G}$ and $\widetilde{\mc{G}}$, respectively. We further denote by $d_k$ and $\widetilde{d}_k$ the probability that the root vertex of a randomly chosen member of $\mc{G}'$ or $\widetilde{\mc{G}}'$ is $k$. 
The main tool to obtain asymptotic results is the analogue on systems of equations of Theorem \ref{ThcombCLT1}.

\subsection{The labelled case}\label{subsec:lab}
Consider the series
 \[
B_j'(z,\mathbf{v}) =
\sum_{n,n_1,\ldots,n_k,n_\infty}
b_{j;n;n_1,\ldots,n_k,n_\infty}' v_1^{n_1}\cdots v_k^{n_k}v_\infty^{n_\infty}\frac{z^n }
{n!},
\]
where we use the notation $\mathbf{v}=(v_1,\ldots,v_k,v_\infty)$ and $b_{j;n;n_1,\ldots,n_k,n_\infty}'$ is the number of derived
2-connected graphs with $1+n =1+n_1+\cdots+n_k+n_\infty$ vertices such that one vertex of degree
$j$ is marked and the remaining $n$ vertices are labelled by
$1,2,\ldots,n$ and $n_\ell$ vertices have degree $\ell$,
$1\le \ell \le k$, and $n_\infty$ vertices have degree greater
than $k$.
By definition we have
\[
B'(z) = \sum_{2\le j\le \infty} B_j'(z,\mathbf{1}).
\]
Hence the radius of convergence of the functions $B_j'(z,\mathbf{1})$
is greater or equal than the radius of convergence of $B'(z)$.

We introduce an analogous series
\[
C_j'(z,\mathbf{v})
= \sum_{n,n_1,\ldots,n_k,n_\infty}
c_{j;n;n_1,\ldots,n_k,n_\infty}'
v_1^{n_1}\cdots v_k^{n_k}v_\infty^{n_\infty}\frac{z^n }
{n!},
\]
for  derived connected graphs.
We further set
\[B'(z,w) = \sum_j B_j'(z,\mathbf{1})w^j = \sum_{n,j}b'_{n,j}z^nw^j,\]
and $b'_{n,j}$ is the number of derived $2$-connected graphs with $n+1$ vertices such that one vertex of degree $j$ is marked and the remaining $n$ vertices are labelled by $1,2,\ldots,n$. Analogously, we define the function $C'(z,w)=\sum_jC'_j(z,\mathbf{1})w^j$. According to the block decomposition of connected graphs
\begin{equation*}\label{eq.blockdec}
 C'(z,w)=\exp(B'(zC'(z),w)).
\end{equation*}
In the following, we set $B_j'(z):=B_j'(z,\mathbf{1})$. We will also use the series $B(z,\mathbf{v})$, which is the refined version of the generating function $B(z)$ of unrooted blocks taking into account all vertex degrees.
\begin{thm}\label{thm:deg}
Let $\mathcal{G}$ be a family of random subcritical graphs
(such that $[z^n]\,\cC(z)> 0$ for $n\ge n_0$)
and $\mathcal{G}'$ be its derived family. Then for $k$ fixed,
\begin{enumerate}
 \item\label{1} the limiting probability $d_k$ that the pointed vertex of a member of $\mathcal{G}'$ has degree $k$ exists and is given by
\begin{equation*}
d_k=\rho \left( \sum_{i=1}^k  B_i''\left(\rho C'(\rho)\right)C_{k-i}'(\rho)\right).
\end{equation*}
\item\label{2} the random variable $X_n^k$ that counts the number of vertices of degree $k$ in a randomly chosen member of $\mathcal{G}$ satisfies a central limit theorem with mean $\mathbb{E}\, X_n^k = d_k n + O(1)$ and variance $\mathbb{V}{\rm{ar}}\, X_n^k= \sigma_k^2 n + O(1)$, where $\sigma_k$ is a constant depending on $k$.
\end{enumerate}
\end{thm}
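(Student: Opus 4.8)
The plan is to set up a system of functional equations that refines the block decomposition $C'(z,w) = \exp(B'(zC'(z),w))$ by keeping track simultaneously of the size ($z$) and of the number of vertices of each degree $1,\dots,k$ and $>k$ (the vector $\mathbf{v}$), then apply the multivariate combinatorial central limit theorem (the systems analogue of Theorem~\ref{ThcombCLT1}, as described after that theorem). First I would observe that $y(z,\mathbf{v}) := C'(z,\mathbf{v})$ (together with the auxiliary series needed to make the system strongly recursive) satisfies an equation of the form $y = \mathbf{F}(y;z,\mathbf{v})$ obtained from the block decomposition by substituting $zC'(z)$ into the refined block series $B'$; at $\mathbf{v}=\mathbf{1}$ this reduces to the equation $\cCp(z)=z\exp(\cB'(\cCp(z)))$, which we already know (from the subcriticality hypothesis and Lemma~\ref{lem:CpOGF}/Theorem~\ref{Thsystem1}) is analytically well-founded with a square-root expansion at $\rho$. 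The key structural point is that the substitution argument $zC'(z)$ stays strictly below the radius of convergence $\eta$ of $\cB'$ by subcriticality, so the composed system is analytic at the relevant point and the singularity is a genuine branch point of square-root type coming from $y$ itself, not from the outer series $B'$.

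For part~\eqref{1}, I would extract $d_k$ as the probability that a random derived connected graph has pointed vertex of degree $k$, i.e.\ the coefficient ratio $[z^{n-1}]\,C_k'(z)\,/\,[z^{n-1}]\,\cCp(z)\cdot\frac1n \to$ the limiting value; since all these series have the same dominant singularity $\rho$ with square-root expansions, the limit is the ratio of the coefficients $b_C(\rho)$ in the square-root terms (or, equivalently here since the singularities coincide, one computes it directly). Writing $C'(z,w)=\exp(B'(zC'(z),w))$ and differentiating with respect to $w$, then reading off the coefficient of $w^k$, gives $C_k'(z) = C'(z)\cdot\big(\text{convolution of }\partial_z B_i'(zC'(z))\text{ with }C_{k-i}'(z)\big)$; evaluating at $z=\rho$ and using $C'(\rho) = $ the value making $\rho C'(\rho)$ the singularity of $B'$, together with the normalisation $d_0 + d_1 + \cdots = 1$, yields precisely the stated formula $d_k = \rho\sum_{i=1}^k \partial_z B_i'(z)|_{z=\rho C'(\rho)}\, C_{k-i}'(\rho)$. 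The slightly delicate bookkeeping is matching the factor $\rho$ and the shift in the pointed-vs-labelled vertex count, which I would handle by the relation $\cCp(z)=zC'(z)$ and $[z^n]\cGp = n[z^n]\cG$.

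For part~\eqref{2}, I would introduce the single marking variable $v=v_k$ for vertices of degree exactly $k$ (keeping $v_1,\dots,v_{k-1},v_\infty$ as bookkeeping variables, or more simply collapsing them, since the central limit theorem only needs one secondary variable at a time), obtaining a generating function $\cG(z,v)$ whose $[z^nv^m]$ coefficient counts graphs on $n$ vertices with $m$ vertices of degree $k$. Because $\cG(z,v)=\exp(\cC(z,v))$ and $\exp$ is entire, it suffices to treat connected graphs, whose generating function is governed by the system $y=\mathbf{F}(y;z,v)$ above. Subcriticality is stable under perturbing $v$ near $1$ since the number of degree-$k$ vertices is trivially $\le n$ (linear worst case), so the Lemma after Definition~\ref{def:sub_unl} applies and the square-root expansion persists with an analytic singularity curve $z=\rho(v)$; the combinatorial central limit theorem then gives asymptotic normality with $\mathbb{E}\,X_n^k = \mu_k n + O(1)$, $\mathbb{V}\mathrm{ar}\,X_n^k = \sigma_k^2 n + O(1)$, and the identification $\mu_k = d_k$ follows by comparing the mean formula $\mu_k = -v\rho'(v)/\rho(v)|_{v=1}$ with the coefficient-ratio computation of part~\eqref{1}. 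Positivity of $\sigma_k^2$ (needed to make the normalised statement meaningful) is guaranteed away from degenerate cases by Lemma~\ref{Lepositive}, checking the rank condition on small graphs.

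The main obstacle, as I see it, is not the analytic transfer — that is essentially forced by subcriticality and the machinery of Section~\ref{sec:tools} — but rather setting up the system $y=\mathbf{F}(y;z,\mathbf{v})$ so that it is genuinely \emph{strongly recursive} and \emph{analytically well-founded} in the sense of Definition~\ref{Def3} while still faithfully encoding the degree refinement through the composition $zC'(z)$. In particular one must check that the outer series $B'(y,\mathbf{v})$ — now carrying infinitely many potential degree variables — remains analytic at the substitution point $(\rho C'(\rho),\mathbf{1})$ uniformly, and that truncating at degree $k$ (lumping all higher degrees into $v_\infty$) does not move the singularity; this is where the subcriticality inequality $f(\rho)<\eta$ does the essential work, since it provides the margin ensuring $\rho C'(\rho)$ is safely inside the disk of analyticity of the block series even after refinement.
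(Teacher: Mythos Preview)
Your proposal is correct and follows essentially the same approach as the paper: part~(1) via the probability generating function $p(w)$ derived from $C'(z,w)=\exp(B'(zC'(z),w))$, and part~(2) via a strongly recursive refined system plus stability of subcriticality and the systems analogue of Theorem~\ref{ThcombCLT1}. The only substantive detail you leave implicit is the explicit form of the system --- the paper writes it out concretely (Lemma~\ref{Le5}) using substitution functions $W_j$ that track the degree shift when a vertex of degree $j$ in a block is replaced by a derived connected graph of root degree $i$, yielding total degree $i+j$; this is precisely the ``auxiliary series needed to make the system strongly recursive'' you allude to, and the paper's identification $\mu_k=d_k$ is argued combinatorially (rooting at any of the $n$ vertices) rather than by matching the mean formula.
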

\begin{proof}[Proof of Theorem~\ref{thm:deg}~\eqref{1}]
We use the following lemma, which makes it easy to compute $d_k$. The proof is the same as the one for \cite[Theorem 9.17]{Drmota}.
\begin{lem}\label{lem:p(w)lab}
 The generating function $p(w) = \sum d_kw^k$ satisfies
\begin{equation*}
p(w) = \rho e^{B'(z,w)}\frac{\partial}{\partial z} B'(z,w) \bigg|_{z=\rho C'(\rho)},
\end{equation*}
and $p(1)=1$, thus the $d_k$'s define indeed a probability distribution.
\end{lem}
\noindent
Therefore, we have
\begin{align*}
d_k&=[w^k]p(w)
= [w^k]\rho e^{B'(z,w)}\frac{\partial}{\partial z} B'(z,w)\bigg|_{z=\rho C'(\rho)}\\
&=\rho\left(\sum_{i=0}^k[w^{k-i}]e^{B'(z,w)}[w^{i}]\frac{\partial}{\partial z} B'(z,w)\right)\bigg|_{z=\rho C'(\rho)}\\
&=\left(\sum_{i=1}^k\frac{\partial}{\partial z} B'_i(z)[w^{k-i}]\left(1+\sum_kB_k'(z)w^k + \frac{(\sum_kB_k'(z)w^k)^2}{2}+\cdots\right)\right)\bigg|_{z=\rho C'(\rho)}\\
&=\left(\sum_{i=1}^k\frac{\partial}{\partial z} B'_i(z)\left(\sum_{m=1}^{k-i}\sum_{l_1+\cdots l_m=k-i}B_{l_1}(z)\cdots B_{l_m}(z)\right)\right)\bigg|_{z=\rho C'(\rho)}.
\end{align*}
From there the result follows, as the second term is exactly the representation of a connected graph of root degree $k-i$ according to the block decomposition, evaluated at $z=\rho$.
\end{proof}

\begin{proof}[Proof of Theorem~\ref{thm:deg}~\eqref{2}]
First we derive a system of functional equations which is satisfied by $C_j'=C_j'(z,\mathbf{v})$, and is refinement of  \[C^\bullet(z) = z \exp (B'(C^\bullet(z))).\]
\begin{lem}\label{Le5}
Let $W_j = W_j(z,\mathbf{v};
C_1',\ldots,C_k',C_\infty')$,
$j \in\{ 1,2,\ldots,k,$ $\infty\}$
defined by
\begin{align*}
W_j &= \sum_{i=0}^{k-j} v_{i+j} C_i'(z,\mathbf{v})
+ v_\infty \left( \sum_{i=k-j+1}^k  C_i'(z,\mathbf{v})
+ C_\infty'(z,\mathbf{v}) \right), \qquad 1\le j\le k,\\
W_\infty &= v_\infty \left( \sum_{i=0}^k  C_i'(z,\mathbf{v})
+ C_\infty'(z,\mathbf{v}) \right).
\end{align*}
Set $\mathbf{W}=(W_1,\ldots,W_k,W_\infty)$.
Then the series
$C_1',\ldots,C_k',C_\infty'$ satisfy the
system of equations
\begin{align*}
C_j'(z,\mathbf{v}) &=
\sum_{\ell_1 +  \cdots j\ell_j= j}
\ \prod_{r = 1}^j \frac{B_r'(z,\mathbf{W})^{\ell_r}}{\ell_r !}, \qquad 1\le j\le k,\\
C_\infty'(z,\mathbf{v}) &=\exp  \Biggl(  \sum_{j=1}^k  B_j'(z,\mathbf{W})+  B_\infty'(z,\mathbf{W}) \Biggr) - 1- \sum_{1\le \ell_1  + \cdots k\ell_k \le k}
\ \prod_{r= 1}^k \frac{B_r'(z,\mathbf{W})^{\ell_r}}{\ell_r !}.
\end{align*}
\end{lem}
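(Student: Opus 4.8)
The statement is a purely combinatorial refinement of the block decomposition of connected graphs, so the plan is to translate that decomposition through the symbolic dictionary of Table~\ref{fig:dict} while keeping careful track of vertex degrees up to the threshold $k$. Recall the decomposition underlying $\cCp(z)=z\exp(B'(\cCp(z)))$, equivalently $C'(z)=\exp(B'(zC'(z)))$: a derived connected graph rooted at $v$ is obtained by taking a set of derived $2$-connected graphs, identifying their pointed vertices with $v$, and then attaching to each non-root vertex $w$ of these blocks a connected graph rooted at $w$ (allowed to be the single vertex $w$, with no effect). The blocks incident to $v$ are exactly the blocks of the resulting graph that contain $v$, and every other vertex lies in a unique block.

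First I would fix the degree bookkeeping. In the resulting graph, the degree of $v$ equals the sum, over all blocks incident to $v$, of the degree that $v$ has inside that block; and the degree of a non-root vertex $w$ equals $\ell+i$, where $\ell$ is the degree of $w$ inside its block and $i\ge 0$ is the root-degree of the connected graph attached at $w$. Since we record degrees only up to $k$ and collapse everything larger to the symbol $\infty$, the contribution of the non-root vertices is obtained from $B_r'(z,\mathbf{v})$ by the substitution $v_\ell\mapsto W_\ell$: the series $W_j$ (for $1\le j\le k$) enumerates the possibilities for attaching a rooted connected graph at a vertex currently of degree $j$ in its block and reporting the resulting degree — which is $i+j$ and recorded by $v_{i+j}$ when $0\le i\le k-j$ (with factor $C_i'(z,\mathbf{v})$), and exceeds $k$ and is recorded by $v_\infty$ otherwise (with factor $C_i'(z,\mathbf{v})$ for $k-j<i\le k$, or $C_\infty'(z,\mathbf{v})$ for the remaining case), while $W_\infty$ handles a vertex already of degree $>k$, whose degree remains $>k$ whatever is attached. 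These are precisely the expressions in the statement.

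Then I would assemble the system. Grouping the blocks incident to $v$ according to the degree $r$ that $v$ has inside them, and writing $\ell_r$ for the number of such blocks, we have $\deg(v)=\sum_r r\ell_r$. The set construction over all incident blocks, translated via the EGF dictionary, gives $\exp\!\big(\sum_r B_r'(z,\mathbf{W})\big)=\prod_r\sum_{\ell_r\ge 0}\tfrac{1}{\ell_r!}B_r'(z,\mathbf{W})^{\ell_r}$ as the full sum over derived connected graphs with all degrees recorded through $\mathbf{W}$. Extracting the terms with $\sum_r r\ell_r=j$, and noting that $\ell_r\ge 1$ forces $r\le j$, yields the stated formula for $C_j'$, $1\le j\le k$. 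For $C_\infty'$ one subtracts from the total $\exp\!\big(\sum_{j=1}^k B_j'(z,\mathbf{W})+B_\infty'(z,\mathbf{W})\big)$ the single-vertex term $C_0'=1$ and the terms of root-degree between $1$ and $k$, namely $\sum_{1\le \ell_1+2\ell_2+\cdots+k\ell_k\le k}\prod_{r=1}^k\tfrac{1}{\ell_r!}B_r'(z,\mathbf{W})^{\ell_r}$, which gives the stated expression. As a sanity check, setting $\mathbf{v}=\mathbf{1}$ collapses each $W_j$ and $W_\infty$ to $C'(z):=\sum_{i=0}^k C_i'(z,\mathbf{1})+C_\infty'(z,\mathbf{1})$ and reduces $B_r'(z,\mathbf{W})$ to $B_r'(zC'(z))$, recovering $C'(z)=\exp(B'(zC'(z)))$; this is the refinement claim.

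The one delicate point — and the step I would write out in full — is the ``overflow'' accounting: a vertex whose final degree exceeds $k$ must be recorded by the single variable $v_\infty$ exactly once, whether the excess comes from the block side, from the attached-graph side, or from their sum. This is what dictates the precise index ranges appearing in $W_1,\ldots,W_k$ and $W_\infty$, and it is why $C_\infty'$ has to be written as a difference rather than by a direct sum. Everything else is the symbolic method applied to the standard block decomposition, with no analytic input required at this stage.
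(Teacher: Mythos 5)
Your proof is correct and follows essentially the same approach as the paper's: refining the block decomposition $C'(z)=\exp(B'(zC'(z)))$ by tracking degrees through the substitution series $W_j$, where $W_j$ records that a block vertex of in-block degree $j$ acquires degree $i+j$ (or overflows to $v_\infty$) when a rooted connected graph of root-degree $i$ is attached. The paper's own proof is just a terser statement of this idea; your version spells out the degree bookkeeping, the grouping by $\sum_r r\ell_r=j$, the subtraction defining $C_\infty'$, and the $\mathbf{v}=\mathbf{1}$ sanity check, all of which are consistent with the paper.
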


\begin{proof}
As already indicated, the proof is a refined version of the functional equation fulfilled by $C^\bullet(z)$, which reflects the decomposition of a derived
connected graph into a finite set of derived $2$-connected
graphs, where every vertex (different from the root) is substituted
by a derived connected graph. The functions $W_j$ serve the purpose of
marking (recursively) the degree of the vertices in the
2-connected blocks which are substituted by other graphs. In the definition of $W_j$, the summation means that we are
substituting a vertex of degree $i$, but since originally the
vertex had degree $j$, we are creating a new vertex of degree
$i+j$, which is marked accordingly by $v_{i+j}$. The analogous holds for $W_\infty$.
\end{proof}
To prove Theorem~\ref{thm:deg}~\eqref{2} we observe
\[
C'(z) = \sum_{0\le j\le \infty} C_j'(z,\mathbf{1}).
\]
Furthermore, since the above system of equations is strongly connected,
all functions $C_j'(z,\mathbf{1})$ have the same radius of
convergence as $C'(z)$, as mentioned in Section \ref{sec:systemequations}. By assumption this radius of convergence
is smaller than the radius of convergence of $B'(z)$.
Hence, by stability if $\mathbf{v}$ is sufficiently close to $\mathbf{1}$
then
the the singularities of $B_j'$ and $C_j'$ do not interfere,
in particular we can apply Theorem \ref{Thsystem1}
and obtain that all functions
$C_j'$ have a square-root singularity.
Finally, let \[C^{(k)}(z,v) = \sum_{n,m}c_{k;n,m} v^m\frac{z^n}{n!}\] be the generating function for the numbers $c_{k;n,m}$ of unrooted connected outerplanar graphs of size $n$ with $m$ nodes of degree $k$. Then $C^{(k)}(z,v)$ satisfies

\[\frac{\partial C^{(k)}(z,v)}{\partial z} = \sum_{j=1}^{k-1} C_j'(z,1,\ldots,1,v,1) + v C'_k(z,1,\ldots,1,v,1) + C'_\infty(z,1,\ldots,1,v,1),\]
and thus $C^{(k)}(z,v)$ has a singular expansion of order $\frac{3}{2}$ around ${v}={1}$.
%
%
%
Furthermore, the central limit theorem for the number of vertices of given degrees with mean $\mu_k n$ and variance $\sigma^2_k n$ follows by the analogue of Theorem \ref{ThcombCLT1} for systems of equations. It immediately follows that $d_k=\mu_k$ as there are exactly $n$ possible ways to root an unrooted object of size $n$ at one of the vertices and thus the probability that a random vertex has degree $k$ is exactly the same as the probability that the root vertex has degree $k$.
Despite that, we could also use formula~\eqref{meanformula} to compute $\mu_k$, we would obtain the same value as for $d_k$ here.
\end{proof}

\subsection{The unlabelled case}\label{subsec:unlab}
We introduce cycle index sums  $$Z_{\widetilde{\mc{B}}'_{j}}(s_1,s_2,\ldots;v_{1,1},v_{1,2},\ldots;\ldots;v_{k,1},v_{k,2},\ldots;v_{\infty,1},v_{\infty,2},\ldots)$$
for the class of pointed blocks, where the pointed vertex has degree $j$ and is not counted, and where the variables $v_{i,j}$ count the cycles of length $j$ of vertices of degree $i$, and $v_{\infty,j}$ counts those vertices of degree greater than $k$. As in Section~\ref{subsec:lab} let $\mathbf{v}=(v_1,\ldots,v_k,v_\infty)$. Denote the corresponding OGFs by $\widetilde{B}'_j(z,\mathbf{v}), j\in \{1,\dots,k,\infty\}$ and let
\begin{align*}
 \widetilde{B}'(z,\mathbf{v}):= \sum_{j=2}^{k} v_j\widetilde{B}'_{j}(z,\mathbf{v})+v_{\infty}\widetilde{B}'_{\infty}(z,\mathbf{v}).
\end{align*}
Note that
\begin{align*}
 Z_{\widetilde{\mc{B}}'}(s_1,s_2,\ldots;1,1,\ldots;\ldots;1,1,\ldots;1,1,\ldots) = Z_{\widetilde{\mc{B}}'}(s_1,s_2,\ldots),
\end{align*}
and thus the singularity $\rho_1(\mathbf{v})$ of $\widetilde{B}'(z,\mathbf{v})$ is the same as that of $\widetilde{B}'(z)$ at $\mathbf{v}=\mathbf{1}$, and $\rho_1(\mathbf{v})$ is the dominant singularity of the system $\mathbf{\widetilde{B}'}(z,\mathbf{v})=(\widetilde{B}'_j(z,\mathbf{v}))_{j\in \{1,\dots,k,\infty\}}$.


We now introduce the multivariate generating functions
\begin{align*}
\widetilde{C'_{j}}(z,\mathbf{v}) &= \sum_{n; n_1,\ldots,n_k,n_{\infty}} \widetilde{c}_{i;n;n_1,\ldots,n_k,n_{\infty}} v_1^{n_1}\cdots v_k^{n_k}v_{\infty}^{n_{\infty}}z^n
\end{align*}
where the coefficient $\widetilde{c}_{i;n;n_1,\ldots,n_k,n_{\infty}}$ denotes the number of elements of size $n$ of $\mc{C}'$, where the pointed vertex has degree $j$ and with $n_i, i\in \{1,\dots,k\}$, vertices of degree $i$ and $n_{\infty}$ vertices of degree greater than $k$. We further set
\[\widetilde{B}'(z,w) = \sum_j \widetilde{B}_j'(z,\mathbf{1})w^j = \sum_{n,j}\widetilde{b}'_{n,j}z^nw^j\]
and
\[\widetilde{C}'(z,w) = \sum_j \widetilde{C}_j'(z,\mathbf{1})w^j = \sum_{n,j}\widetilde{c}'_{n,j}z^nw^j.\]
As we need cycle indices for the block decomposition, we set
\[Z_{\widetilde{\mc{B}}'}(s_1,s_2,\ldots;w) =\sum_j Z_{\widetilde{\mc{B}}_j'}(s_1,s_2,\ldots)w^j.\]
Note that the variable $w$, which counts the degree of the root, is not involved in any permutation cycle.
Then,
\begin{align*}
 \widetilde{C}'(z,w) = \exp \left(\sum_{\ell\geq 1}\frac{1}{\ell}Z_{\widetilde{\mc{B}}'}(z^\ell C'(z^\ell),z^{2\ell}C'(z^{2\ell}),\ldots; w^\ell)\right).
\end{align*}

\begin{thm}\label{thm:deg-unl}
Let $\widetilde{\mathcal{G}}$ be a family of random subcritical graphs
(such that $[z^n]\,\cC(z)> 0$ for $n\ge n_0$)
and $\widetilde{\mathcal{G}}'$ be it's derived family. Further, let $\widetilde{d}_k$ be the limiting probability that the root vertex of a member of $\widetilde{\mathcal{G}}'$ has degree $k$ and let $\widetilde{X}_n^k$ be the random variable that counts the number of vertices of degree $k$ in a randomly chosen member of $\widetilde{\mathcal{G}}$. Then
\begin{enumerate}
 \item\label{un1}
$\widetilde{d}_k=\rho \left( \sum_{i=1}^k \frac{\partial}{\partial z} Z_{\widetilde{\mc{B}}_i'}(z,\rho^2\widetilde{C}'(\rho^2),\rho^3\widetilde{C}'(\rho^3),\ldots)\big|_{z=\rho C'(\rho)}C_{k-i}'(\rho)\right).
$
\item\label{un2} $\widetilde{X}_n^k$ satisfies a central limit theorem with mean $\mathbb{E}\widetilde{X}_n^k=\widetilde{\mu}_k n+O(1)$ and variance $\mathbb{V}{\rm{ar}}\ \widetilde{X}_n^k=\widetilde{\sigma}_k^2 n+O(1)$.
\end{enumerate}
\end{thm}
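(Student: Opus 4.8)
The plan is to follow, almost line for line, the labelled argument (Theorem~\ref{thm:deg}), systematically replacing exponential generating functions by cycle index sums and invoking the stability of subcriticality under a small real perturbation of $\mathbf{v}$ around $\mathbf{1}$ (the remark following Definition~\ref{def:sub_unl}), which applies here since the number of vertices of a fixed degree is at most $n$, i.e. has linear worst case. For part~\eqref{un1} I would first establish the unlabelled analogue of Lemma~\ref{lem:p(w)lab}. Differentiating the functional equation
$$
\widetilde{C}'(z,w)=\exp\Big(\sum\nolimits_{\ell\ge 1}\tfrac1\ell\,Z_{\widetilde{\mc B}'}\big(z^\ell C'(z^\ell),z^{2\ell}C'(z^{2\ell}),\ldots;w^\ell\big)\Big)
$$
with respect to $z$, and noticing that --- since $\rho<1$ --- only the $\ell=1$ summand, and within it only its first argument $zC'(z)=\tcCp(z)$, is singular at $\rho$, whereas $z^jC'(z^j)$ for $j\ge 2$ is analytic there, one gets that near $z=\rho$ the singular part of $\partial_z\widetilde{C}'(z,w)$ has coefficient $\widetilde{C}'(\rho,w)\,\partial_{s_1}Z_{\widetilde{\mc B}'}\big(\tcCp(\rho),\tcCp(\rho^2),\ldots;w\big)$. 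Transferring to coefficients (via Section~\ref{sec:transfer}) and normalising --- using that the singular system $1=y\,g_y(y,z)$ of Lemma~\ref{lem:CpOGF} gives $\partial_{s_1}Z_{\widetilde{\mc B}'}(\tcCp(\rho),\tcCp(\rho^2),\ldots;1)=1/\tcCp(\rho)$, together with $\widetilde{C}'(\rho,1)=C'(\rho)$ --- yields
$$
\widetilde{p}(w):=\sum\nolimits_k\widetilde{d}_k w^k=\rho\,\widetilde{C}'(\rho,w)\,\partial_{s_1}Z_{\widetilde{\mc B}'}\big(\tcCp(\rho),\tcCp(\rho^2),\ldots;w\big),
$$
so in particular $\widetilde{p}(1)=1$ and the $\widetilde{d}_k$ genuinely form a probability distribution.

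Extracting $[w^k]$ then finishes part~\eqref{un1}: since $w$ sits in no permutation cycle, $Z_{\widetilde{\mc B}'}(s_1,s_2,\ldots;w)=\sum_j Z_{\widetilde{\mc B}'_j}(s_1,s_2,\ldots)\,w^j$, hence $[w^i]\partial_{s_1}Z_{\widetilde{\mc B}'}(\cdots;w)=\partial_{s_1}Z_{\widetilde{\mc B}'_i}(\cdots)$ and $[w^{k-i}]\widetilde{C}'(\rho,w)=\widetilde{C}'_{k-i}(\rho)$; the Cauchy product gives exactly the displayed formula of~\eqref{un1}, the sum running over $1\le i\le k$ (a block has root-degree $\ge 1$ and $k-i\ge 0$).

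For part~\eqref{un2} I would write down the unlabelled refinement of Lemma~\ref{Le5}: a system of functional equations for the refined series $\widetilde{C}'_j(z,\mathbf{v})$, $j\in\{1,\dots,k,\infty\}$, obtained from the cycle index sums $Z_{\widetilde{\mc B}'_j}$ by the P\'olya substitution rule of Table~\ref{fig:dict}, carried out simultaneously with the degree bookkeeping. This is the technical heart of the proof: a length-$m$ cycle of non-root block vertices --- all of the same block-degree $d$, since automorphisms preserve degrees --- when substituted by a rooted connected graph of root-degree $i$ must, for compatibility with the cyclic action, be replaced by an ``$m$-cyclic'' copy of $\widetilde{C}'_i$, after which those $m$ vertices form a length-$m$ cycle of vertices of degree $d+i$; this is precisely what the doubly-indexed cycle variables $v_{d,m}$ of $Z_{\widetilde{\mc B}'_j}$ record, and the substitution replaces each such variable by the corresponding degree-shifting marking series (the unlabelled counterpart of the functions $W_j$ of Lemma~\ref{Le5}, now indexed also by the cycle length). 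Getting this substitution coherent around each cycle is the one genuinely delicate step, and I expect it to be the main obstacle.

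Once the refined system is correctly in place, the remainder is routine. Its dependency graph is strongly connected --- inherited from the basic equation $\widetilde{C}^\bullet(z)=z\exp(\cdots)$ --- so the system is strongly recursive; at $\mathbf{v}=\mathbf{1}$ it is analytically well-founded because subcriticality forces the common singularity $\rho$ of the $\widetilde{C}'_j$ to be strictly smaller than the singularity $\rho_1(\mathbf{1})$ of the block series $\widetilde{B}'_j$, so that the functional system is analytic at the relevant point, and by the stability of subcriticality this persists for real $\mathbf{v}$ close to $\mathbf{1}$. Theorem~\ref{Thsystem1} then gives a square-root expansion for every $\widetilde{C}'_j(z,\mathbf{v})$ around its common singularity. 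The generating function $\widetilde{C}^{(k)}(z,v)$ counting degree-$k$ vertices in unrooted connected members is an antiderivative in $z$ of $\sum_j\widetilde{C}'_j(z,1,\dots,1,v,1,\dots)$ (with $v$ in slot $k$), hence inherits a singular expansion of order $3/2$; the systems analogue of Theorem~\ref{ThcombCLT1} then yields the central limit theorem for $\widetilde{X}_n^k$ with $\mathbb{E}\,\widetilde{X}_n^k=\widetilde{\mu}_k n+O(1)$ and $\mathbb{V}{\rm{ar}}\,\widetilde{X}_n^k=\widetilde{\sigma}_k^2 n+O(1)$, and $\widetilde{\mu}_k=\widetilde{d}_k$ because an unrooted object of size $n$ has exactly $n$ rootings, which reconciles~\eqref{un2} with~\eqref{un1}. (If wanted, $\widetilde{\sigma}_k^2>0$ can be checked by the systems counterpart of Lemma~\ref{Lepositive}.) Everything downstream of the refined system thus runs exactly parallel to the labelled proof of Theorem~\ref{thm:deg}~\eqref{2}.
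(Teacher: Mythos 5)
Your treatment of part~\eqref{un1} is correct and reaches the same key intermediate formula $\widetilde{p}(w)=\rho\,\widetilde{C}'(\rho,w)\,\partial_{s_1}Z_{\widetilde{\mc B}'}(\rho C'(\rho),\rho^2C'(\rho^2),\ldots;w)$ as the paper; you re-derive it by a direct singular-part computation on the functional equation for $\widetilde{C}'(z,w)$, whereas the paper invokes a rooting lemma of Drmota, but the two arguments are equivalent and the $[w^k]$-extraction is the same.

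Part~\eqref{un2}, however, contains a genuine error, and it is precisely the pitfall the paper warns against in the remark that immediately follows the theorem. You assert that $\widetilde{C}^{(k)}(z,v)$ is an antiderivative in $z$ of $\sum_j\widetilde{C}'_j(z,1,\ldots,1,v,1)$, and you close by claiming $\widetilde{\mu}_k=\widetilde{d}_k$ ``because an unrooted object of size $n$ has exactly $n$ rootings.'' Both statements are transplants of the \emph{labelled} identity $n\,c_n=c'_n$, and both fail for unlabelled objects: a derived (vertex-pointed) unlabelled graph corresponds to a \emph{fixed point} of a vertex-permutation, not to an arbitrary vertex, so in general $n\,\widetilde{g}_n\neq\widetilde{g}'_n$, and $\widetilde{C}(z)$ is \emph{not} the $z$-antiderivative of $\widetilde{C}'(z)/z$ (one picks up the extra term $Q_z(z,z)$ when differentiating $Q(z,z)$). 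The paper's proof repairs exactly this: it passes to the cycle-index refinements $\widetilde{C}^{(k)}(s,z,v)=Z_{\widetilde{\mc C}}(s,z^2,z^3,\ldots;\ldots)$ and $\widetilde{C}'^{(k)}(s,z,v)$, integrates in the auxiliary slot $s$ with $z$ held as a parameter, and only afterwards sets $s=z$; this is what produces the order-$3/2$ expansion along $z=\rho_2(v)$, from which the systems CLT follows. The paper moreover states explicitly that $\widetilde{\mu}_k\neq\widetilde{d}_k$ in general and that $\widetilde{\mu}_k$ has to be computed via~\eqref{meanformula}. So your final ``reconciliation'' of~\eqref{un1} and~\eqref{un2} is wrong, and the step from the rooted system to $\widetilde{C}^{(k)}(z,v)$ omits the cycle-index refinement that makes the unrooting work in the unlabelled setting. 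The rest of your part~\eqref{un2} (refined system for the $\widetilde{C}'_j$, strong connectivity, stability of subcriticality near $\mathbf{v}=\mathbf{1}$, square-root expansions, systems version of Theorem~\ref{ThcombCLT1}) is in line with the paper.
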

\begin{rk}
In the unlabelled case, we cannot expect that $\widetilde{\mu}_k$ equals $\widetilde{d}_k$, as vertex-rooting is only possible at fixed points of permutations, and thus in unlabelled graphs $n \widetilde{g_n} \neq \widetilde{g'_n}$.
\end{rk}

\begin{proof}[Proof of Theorem~\ref{thm:deg-unl}~\eqref{un1}] The proof is based on the following lemma:
 \begin{lem}
The generating function $\widetilde{p}(w) = \sum \widetilde{d}_kw^k$ is equal to
\[\rho\frac{\partial}{\partial u}\exp\left(Z_{\widetilde{\mc{B}}'}(u,z^2C'(z^{2}),\ldots;w)+\sum_{\ell\geq 2}\frac{1}{\ell}Z_{\widetilde{\mc{B}}'}(z^\ell C'(z^\ell),z^{2\ell}C'(z^{2\ell}),\ldots; w^\ell)\right),\]
and $\widetilde{p}(1)=1$.  That is, it defines indeed a probability distribution.
\end{lem}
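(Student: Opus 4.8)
The plan is to transcribe the argument of the labelled case (Lemma~\ref{lem:p(w)lab}), working with the derived connected class (whose limiting root-degree distribution coincides with that of $\widetilde{\mathcal{G}}'$, since $\exp$ is analytic) and replacing exponential generating functions by cycle index sums. Write $f(z):=\tcCp(z)$, let $\rho$ be its radius of convergence and $\tau:=f(\rho)$; recall $f(z)=z\,\widetilde{C}'(z,1)$, so $\tau=\rho\,\widetilde{C}'(\rho,1)$ is what the paper writes as $\rho\,C'(\rho)$. By Lemma~\ref{lem:CpOGF}, $f$ has a square-root singular expansion at $\rho$ and $(y,z)=(\tau,\rho)$ is a solution of the singular system $y=z\exp(g(y,z)+A(z))$, $1=y\,g_y(y,z)$, with $g$, $A$ as in~\eqref{eq:def_g}, \eqref{eq:def_A}. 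First I would rewrite the block-decomposition identity preceding Theorem~\ref{thm:deg-unl}, using $z^\ell\widetilde{C}'(z^\ell,1)=f(z^\ell)$ and isolating the $\ell=1$ term, in the form $\widetilde{C}'(z,w)=\Phi(f(z),z,w)$, where
\[
\Phi(u,z,w):=\exp\!\left(Z_{\widetilde{\mc{B}}'}(u,f(z^2),f(z^3),\ldots;w)+\sum_{\ell\ge 2}\tfrac1\ell Z_{\widetilde{\mc{B}}'}(f(z^\ell),f(z^{2\ell}),\ldots;w^\ell)\right).
\]
Note that $w$ occurs only at the top level here (substituting a connected graph at a non-root vertex of a block does not change the root degree), so no fixed-point equation in $w$ is needed, in contrast to Lemma~\ref{Le5}; and $\rho\,\Phi_u(u,z,w)$ evaluated at $u=\tau$, $z=\rho$ is exactly the right-hand side of the claimed formula (here $\Phi_u:=\partial\Phi/\partial u$).

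The second step is to verify that $\Phi$ is analytic at $(u,z,w)=(\tau,\rho,1)$, jointly for $u$ near $\tau$, $z$ near $\rho$ and $w$ in a neighbourhood of $1$. This is where subcriticality enters: specialising the secondary variables at $w=1$ turns $Z_{\widetilde{\mc{B}}'}(u,f(z^2),\ldots;1)$ into $g(u,z)$ and the tail into $A(z)$, which are analytic at $(\tau,\rho)$ and at $\rho$ respectively by Definition~\ref{def:sub_unl} and Lemma~\ref{lem:CpOGF} (recall $\rho<1$, so $f(z^\ell)$ is analytic at $\rho$ for $\ell\ge 2$); for $w$ close to $1$ the cycle index sums converge in the same region, their coefficients being dominated coefficient-wise by those at $w=1$. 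Granting this, I would insert the square-root expansion of $f$ into the Taylor expansion of $\Phi$ in its first argument,
\[
\widetilde{C}'(z,w)=\Phi(\tau,z,w)+\Phi_u(\tau,z,w)\,(f(z)-\tau)+O\big((f(z)-\tau)^2\big),
\]
to conclude that $\widetilde{C}'(z,w)$ has a square-root singular expansion at $\rho$ whose singular coefficient equals, up to the positive $w$-independent square-root coefficient of $f$, the function $\Phi_u(\tau,\rho,w)$; moreover $\rho$ is still the unique dominant singularity and $\widetilde{C}'$ continues to a $\Delta$-domain, since this already holds for $f$ under the positivity hypothesis of Theorem~\ref{thm:deg-unl} and $\Phi$ is analytic. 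The transfer theorems of Section~\ref{sec:transfer}, applied uniformly in $w$, then give $[z^n]\widetilde{C}'(z,w)=\kappa\,\Phi_u(\tau,\rho,w)\,n^{-3/2}\rho^{-n}(1+O(n^{-1}))$ with $\kappa>0$ independent of $w$. Since $\widetilde{d}_k=\lim_{n\to\infty}[z^nw^k]\widetilde{C}'(z,w)/[z^n]\widetilde{C}'(z,1)$, forming the ratio at $w$ and at $w=1$ and summing over $k$ yields $\widetilde{p}(w)=\Phi_u(\tau,\rho,w)/\Phi_u(\tau,\rho,1)$.

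Finally I would identify the normalising constant. Since $Z_{\widetilde{\mc{B}}'}(\,\cdot\,;1,1,\ldots)=Z_{\cB'}(\,\cdot\,)$ we have $\Phi(u,z,1)=\exp(g(u,z)+A(z))$, hence $\Phi_u(u,z,1)=\Phi(u,z,1)\,g_y(u,z)$; evaluating at $(\tau,\rho)$ and invoking the two equations of the singular system of Lemma~\ref{lem:CpOGF}, namely $\tau=\rho\exp(g(\tau,\rho)+A(\rho))$ (so $\Phi(\tau,\rho,1)=\tau/\rho$) and $\tau\,g_y(\tau,\rho)=1$, one obtains $\Phi_u(\tau,\rho,1)=1/\rho$. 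Therefore $\widetilde{p}(w)=\rho\,\Phi_u(\tau,\rho,w)$, which is the asserted expression (with $\partial/\partial u$ taken before the evaluation $u=\rho\,\widetilde{C}'(\rho,1)$, $z=\rho$), and $\widetilde{p}(1)=\rho\,\Phi_u(\tau,\rho,1)=1$ — so no mass escapes to infinity and the $\widetilde{d}_k$ form a genuine probability distribution. The step I expect to be delicate is the analyticity bookkeeping for the infinite-variable cycle index sum defining $\Phi$ near $(\tau,\rho)$, together with the uniformity in $w$ of the singularity-analysis transfer; both reduce to the subcriticality estimates already proved for $\widetilde{\mc{B}}'$, after which the remaining computation is routine and parallels the labelled case.
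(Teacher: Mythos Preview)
Your proof is correct and follows essentially the same route as the paper's: the paper simply invokes \cite[Lemma~2.26]{Drmota} with $H(z,w,u)=\Phi(u,z,w)$ and $f(z)=z\widetilde{C}'(z)$, together with the reasoning of \cite[Theorem~9.17]{Drmota}, whereas you have unpacked that black-box citation into a self-contained singularity-analysis argument (composition of the analytic $\Phi$ with the square-root-singular $f$, transfer, ratio). Your computation of the normalising constant via the singular system of Lemma~\ref{lem:CpOGF} is exactly the paper's ``implicit function representation of $C'(z)$'' step.
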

\begin{proof}
We use \cite[Lemma 2.26]{Drmota} with
\begin{align*}
f(z,w)&=zC'(z),\\
H(z,w,u)&=\exp\left(Z_{\widetilde{\mc{B}}'}(u,z^2C'(z^{2}),\ldots;w)+\sum_{\ell\geq 2}\frac{1}{\ell}Z_{\widetilde{\mc{B}}'}(z^\ell C'(z^\ell),\dots; w^\ell)\right),
\end{align*}
and the same line of reasoning as in \cite[Theorem 9.17]{Drmota} proves the first part of the lemma. For $\widetilde{p}(1)=H_u (\rho,1,\rho C'(\rho))$ we obtain
\[\rho C'(\rho) \frac{\partial}{\partial u}Z_{\widetilde{\mc{B}}'}(u,\rho^2 C'(\rho^2),\ldots)\bigg|_{u=\rho C'(\rho)} =1\] by the implicit function representation of $C'(z)$.
\end{proof}
Now we can determine $\widetilde{d}_k=[w^k]\widetilde{p}(w)$, which is equal to
\begin{align*}
\rho\ [w^k]\left[\exp\left(\sum_{\ell \geq 1}\frac1{\ell}Z_{\widetilde{\mc{B}}'}(\rho^\ell\widetilde{C}'(\rho^\ell),\ldots;w^\ell)\right) \left( \frac{\partial}{\partial u} Z_{\widetilde{\mc{B}}'}(u,\rho^2\widetilde{C}'(\rho^2),\ldots;w)\right)_{u=\rho C'(\rho)}\right].
\end{align*}
Recalling $Z_{\widetilde{\mc{B}}'}= \sum_{l \geq 1} Z_{\widetilde{\mc{B}}_{\ell}'}$, we obtain that the previous expression can be written
\begin{align*}
\rho\ \sum_{i=1}^k \diff{u} Z_{\widetilde{\mc{B}}_i'}(u,\rho^2\widetilde{C}'(\rho^2),\ldots)\big|_{u=\rho C'(\rho)} \cdot [w^{k-i}]\exp\left(\sum_{\ell \geq 1}\frac{1}{\ell}Z_{\widetilde{\mc{B}}'}(\rho^\ell\widetilde{C}'(\rho^\ell),\ldots;w^\ell)\right).
\end{align*}
Observe that the second term translates into $\widetilde{C}'_{k-i}(\rho)$, as the following equality holds: $$\exp\left(\sum_{\ell \geq 1}\frac{1}{\ell}Z_{\widetilde{\mc{B}}'}(\rho^\ell\widetilde{C}'(\rho^\ell),\ldots;w^\ell)\right)=\widetilde{C}'(\rho,w)= \sum_j \widetilde{C}_j'(\rho,\mathbf{1})w^j= \sum_j \widetilde{C}_j'(\rho)w^j.$$
\end{proof}
\begin{proof}[Proof of Theorem~\ref{thm:deg-unl}~\eqref{un2}]
As in the labelled case, we observe that the functions $\widetilde{C'_{j}}(z,\mathbf{v})$ satisfy a system of equations, using a refinement of the block decomposition. In the following we will denote by $\sum_{i=r}^{k,\infty}F_i=\sum_{i=r}^kF_i+F_\infty$.

\begin{lem}\label{unlsys}
For each $j=1,2,\ldots,k,\infty$, let $W_j$ be defined by
\begin{align*}
&W_j(z,\mathbf{v}) = \sum_{i=0}^{k-\ell} v_{j+i} \tcsef{j} +v_\infty \left(\sum_{i=k-j+1}^{k,\infty} \tcsef{i}\right),\\
&W_\infty(z,\mathbf{v}) =v_\infty \left(\sum_{i=0}^{k,\infty}\tcsef{i}\right).
\end{align*}
Set $W_{j,i} = W_j(z^i,v_1^i,\ldots,v_k^i,v_{\infty}^i)$ and
$$\mathbf{W}^{(l)}= (W_{1,l},W_{1,2l},\ldots;\ldots; W_{k,l},W_{k,2l},\ldots;W_{\infty,l},W_{\infty,2l},\ldots).$$
We denote by $S_n(s_1,s_2,\ldots)$ the cycle index of the symmetric group on $n$ elements. Furthermore $S_n[Z_\mc{B}]$ denotes the substitution $s_l\gets Z_{\mc{B}}(s_l,s_{2l},\ldots;\mathbf{W}^{(l)}), l \geq 1$ in $S_n(s_1,s_2,\ldots)$.

Then the series $\widetilde{C'_{1}},\ldots,\widetilde{C'_{k}},\,\widetilde{C'_{\infty}}$ satisfy the system of equations
\begin{align*}
\tcsef{j}
&= \sum_{l_1+2l_2+\cdots+jl_j=j} \ \prod_{r=1}^{j} S_{l_r}\left[Z_{\mc{B}'_r}\right]_{(s_i=z^i)},\quad 1\leq j \leq k\\
\tcsef{\infty}
&=\exp\left(\sum_{l \geq 1} \frac{1}{l}\left(\sum_{r=1}^{k} Z_{\mc{B}'_r}\left(z^l,z^{2l},\ldots; \mathbf{W}^{(l)}\right)+Z_{\mc{B}'_{\infty}}\left(z^l,z^{2l},\ldots; \mathbf{W}^{(l)}\right)\right) \right)-\\
& \sum_{1 \leq l_1+\cdots+kl_k\leq k}\ \prod_{r=1}^{j} S_{l_r}\left[Z_{\mc{B}'_r}\right]_{(s_i=z^i)}.
\end{align*}
%
%
\end{lem}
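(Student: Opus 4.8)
The plan is to prove Lemma~\ref{unlsys} exactly as the unlabelled counterpart of Lemma~\ref{Le5}, the only genuinely new ingredient being P\'olya's enumeration machinery. I would keep the combinatorial decomposition of a derived connected graph unchanged --- a derived connected graph is an unordered collection of derived $2$-connected graphs glued at the pointed vertex, in which every non-root vertex of every block is in turn replaced by a derived connected graph, the trivial replacement $\widetilde{C}'_0=1$ corresponding to leaving the vertex untouched --- and then apply, term by term, the translation rules of Table~\ref{fig:dict}: an unordered collection (the blocks incident to the root) is translated through the cycle index $S_n$ of the symmetric group, restricted to a fixed number of blocks of each ``type'', so that the factor $1/\ell_r!$ of the labelled formula is replaced by $S_{\ell_r}[\,\cdot\,]$; and the operation of substituting a rooted connected graph at each non-root vertex of a block is translated by the substitution-at-an-atom rule, i.e.\ the P\'olya substitution $s_l\gets Z(s_l,s_{2l},\ldots)$ inside the cycle index sum of the block.

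First I would fix the classification of the blocks at the root by the degree $r\in\{1,\dots,k,\infty\}$ that the root has inside them, so that a root of total degree $j\le k$ corresponds to a multiset of blocks with $\ell_r$ blocks of type $r$ and $\ell_1+2\ell_2+\cdots+j\ell_j=j$, while a root of degree $>k$ is the complementary (unrestricted) case and gives $\widetilde{C}'_\infty$. Next I would track the degree of a non-root vertex: if such a vertex has degree $\delta$ inside its block and the derived connected graph substituted at it has root degree $i$, its total degree becomes $\delta+i$, recorded by $v_{\delta+i}$ when $\delta+i\le k$ and by $v_\infty$ otherwise --- this is precisely what the definition of $W_\delta$ implements, its finite part $\sum_{i=0}^{k-\delta}v_{\delta+i}\widetilde{C}'_i(z,\mathbf{v})$ and its overflow part $v_\infty\big(\sum_{i=k-\delta+1}^{k,\infty}\widetilde{C}'_i(z,\mathbf{v})\big)$ being exactly this case split. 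The Frobenius powers $W_{j,i}=W_j(z^i,v_1^i,\dots,v_k^i,v_\infty^i)$ and the tuples $\mathbf{W}^{(l)}$ are then what a cycle of length $l$ feeds into the vertex-slots of the block's cycle index sum under P\'olya substitution, yielding $Z_{\mathcal{B}'_r}(z^l,z^{2l},\ldots;\mathbf{W}^{(l)})$; summing over the admissible multisets of block types and specialising $s_i\gets z^i$ then gives the two displayed equations, the $\widetilde{C}'_\infty$ equation being obtained from the full exponential by subtracting off the terms in which the total root degree stays $\le k$.

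I expect the one delicate point to be the verification that the nested P\'olya substitution is compatible with the degree bookkeeping. One must check that an automorphism of the whole derived connected graph restricts to an automorphism of each incident block and simultaneously permutes the attached derived connected graphs coherently, so that a cycle $(v_1\cdots v_l)$ of non-root vertices of a block, all of block-degree $\delta$, with isomorphic connected graphs of root degree $i$ attached, contributes exactly the factor obtained by plugging $W_{\delta,l}$ --- together with the length-scaled versions of all its subvariables $v_{\cdot,\cdot}$, as encoded in $\mathbf{W}^{(l)}$ --- into the appropriate slot of $Z_{\mathcal{B}'_r}$; and that the two case-distinctions (finite degree $\delta+i\le k$ versus overflow $\delta+i>k$, and the separate $\infty$-bucket for vertices already of degree $>k$ inside the block) are matched consistently between the definition of $W_j$ and the cycle index sum $Z_{\mathcal{B}'_r}$ that receives $\mathbf{W}^{(l)}$. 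Once this correspondence is in place --- it is a degree-refined instance of the last line of Table~\ref{fig:dict}, and the argument parallels the one used for Lemma~\ref{Le5} --- the passage from cycle index sums to ordinary generating functions via $s_i\gets z^i$ is routine, and the stated system of equations follows.
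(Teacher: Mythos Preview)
Your proposal is correct and follows essentially the same approach as the paper: both argue by refining the block decomposition from the labelled Lemma~\ref{Le5}, explain the second index in $W_{j,i}$ as tracking cycle lengths in the P\'olya substitution, and obtain $\widetilde{C}'_\infty$ by writing the full exponential (equivalently the unrestricted multiset over all block types) and subtracting the terms with total root degree at most $k$. Your write-up is considerably more detailed than the paper's very terse proof, but the underlying combinatorial argument and the order of the steps are the same.
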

\begin{rk}
The functions $W_{j,i}$ and thus the whole system can also be considered in terms of cycle index sums, using the cycle index sum $$Z_{\widetilde{C}_j'}(s_1,s_2,\ldots;v_{1,1},v_{1,2},\ldots;\ldots;v_{k,1},v_{k,2},\ldots;v_{\infty,1},v_{\infty,2},\ldots)$$ for rooted connected graphs. The root vertices in $W_{j,i}$ are fixed and thus have cycle length $1$. We will need this terminology in the proof of Lemma \ref{lem:unrooting}.
\end{rk}
\begin{proof}
As in the labelled case, we refine the recursive decomposition of graphs into it's $2$-connected components. The functions $W_{j,i}$ plays the analogous role as in the labelled case, except that we need a second index for representing the cycles of different length appearing in the cycle indices.
This directly leads to $\tcsef{j}$ for $j=1,\ldots k$. For $\tcsef{\infty}$ we obtain
\begin{align*}
&\sum_{(l_1,l_2,\ldots,l_k,l_\infty)}\prod_{r=1}^{k} S_{l_r}\left[\Zb{r}\left(z,z^2,\ldots;\mathbf{W}^{(1)}\right)\right] S_{l_{\infty}}\left[ \Zb{\infty}\left(z,z^2,\ldots;\mathbf{W}^{(1)}\right)\right]-\\
&\ \sum_{l_1+\ldots  +k l_k\leq k} \prod_{r=1}^{k} S_{l_r}\left[\Zb{r}\left(z,z^2,\ldots;\mathbf{W}^{(1)}\right)\right],\\
\end{align*}
where the first sum rewrites into the exponential term appearing in $\tcsef{\infty}$.
\end{proof}

It is easily checked that the system is strongly connected as every $\tcsef{j}$ depends on $\tcsef{\infty}$ for $j\in \{1,\ldots, k\}$ and $\tcsef{\infty}$ depends on all the values $j\in\{ 1,\ldots,k\}$. Obviously,
\begin{align}\label{eq:veq1}
 \widetilde{C}'(z)=\left[\sum_{j=1}^{k}v_j\tcsef{j}+v_{\infty}\tcsef{\infty}\right]_{\mathbf{v}=\mathbf{1}}.
\end{align}
Define
\begin{align*}
\widetilde{C}'(z,\mathbf{v}):=\sum_{j=1}^{k}v_j\tcsef{j}+v_{\infty}\tcsef{\infty}.
\end{align*}
Then the generating function of derived connected graphs of $\widetilde{\mathcal{G}}$, where the variable $v$ counts the nodes of degree $k$, is given by $\widetilde{C}'^{(k)}(z,v)=\widetilde{C}'(z,\mathbf{v}_k)$, where $\mathbf{v}_k=(1,1,\ldots,1,v,1)$.

\begin{lem}
$\widetilde{C}'^{(k)}(z,v)$ has a square-root singular expansion around its singularity $\rho_2(v)$ in a neighbourhood of $v=1$.
\end{lem}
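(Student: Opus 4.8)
The plan is to derive the square-root expansion of $\widetilde{C}'^{(k)}(z,v)$ directly from Theorem~\ref{Thsystem1}, applied to the strongly recursive system of Lemma~\ref{unlsys} for the vector $(\widetilde{C'_{1}},\ldots,\widetilde{C'_{k}},\widetilde{C'_{\infty}})$, with $z$ as primary variable and, after the specialisation $\mathbf{v}=\mathbf{v}_k=(1,\ldots,1,v,1)$, with $v$ as the single secondary variable. First I would check the formal hypotheses: the system is positive, since its right-hand sides are built by non-negative substitutions from the cycle-index sums $Z_{\mc{B}'_r}$, the symmetric-group cycle indices $S_{l_r}$ and $\exp$, and $\mathbf{F}(\mathbf{0};z,\mathbf{1})\neq\mathbf{0}$ (the first coordinate already carries the term coming from the derived link graph); it is not affine in the unknowns (because of the exponential in the $\widetilde{C'_{\infty}}$-equation and of the products $S_{l_r}[\cdot]$) and genuinely depends on $z$; and it is strongly recursive, because — as noted right after Lemma~\ref{unlsys} — every $\widetilde{C'_{j}}$ with $1\le j\le k$ involves $\widetilde{C'_{\infty}}$ through the functions $W_j$, while $\widetilde{C'_{\infty}}$ involves every $\widetilde{C'_{j}}$, $1\le j\le k$, so the dependency graph is strongly connected.

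Next I would exhibit the singular point and verify that it is an inner point of the domain of convergence of the functional system. At $\mathbf{v}=\mathbf{1}$ the system reduces to the equation characterising $\widetilde{C}'(z)$ (which satisfies $z\,\widetilde{C}'(z)=\tcCp(z)$), and since $\widetilde{\mc{G}}$ is subcritical, the argument of Lemma~\ref{lem:CpOGF} gives that $\widetilde{C}'(z)$ has a square-root expansion at its radius of convergence $\rho=\rho_2(1)\in(0,1)$, that $\tau:=\widetilde{C}'(\rho)$ is finite, and that this singularity is strictly smaller than the one of the block series $\widetilde{B}'(z)$; as recalled in Section~\ref{subsec:unlab}, the radius of convergence of each $\widetilde{B}'_j(z)$ is at least that of $\widetilde{B}'(z)$, hence still strictly larger than $\rho$, and each $\widetilde{C'_{j}}(\rho,\mathbf{1})$ is finite, being dominated coefficient-wise by $\widetilde{C}'(z)$. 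Because the parameter marked by $v=v_k$, the number of vertices of degree $k$, is bounded by the number of vertices, the stability of subcriticality under a small perturbation of an additional variable (see the remark following Definition~\ref{def:sub_unl}) gives that the singularity functions $\rho_1$ and $\rho_2$ are continuous at $\mathbf{v}=\mathbf{1}$; hence for real $v$ sufficiently close to $1$ one still has $\rho_2(v)<\min\{1,\rho_1(v)\}$ and $\widetilde{C'_{j}}(\rho_2(v),\mathbf{v}_k)<\infty$. Then all the arguments fed into the cycle-index sums $Z_{\mc{B}'_r}$ in Lemma~\ref{unlsys} — the monomials $z^{\ell},z^{2\ell},\dots$, evaluated for $z$ near $\rho_2(v)<1$, together with the components of $\mathbf{W}^{(l)}$, which are finite non-negative combinations of the finite quantities $\widetilde{C'_{i}}$ at arguments of modulus $\le\rho_2(v)$ — lie strictly inside the domains of convergence of these cycle-index sums, so the functional system is analytic at the candidate point; the eigenvalue condition $\lambda_{\max}=1$ at the singularity is automatic here, the system being positive, strongly recursive and entire in the unknowns (Section~\ref{sec:systemequations}). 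Theorem~\ref{Thsystem1} then yields that each $\widetilde{C'_{j}}(z,\mathbf{v}_k)$, $j\in\{1,\ldots,k,\infty\}$, has a square-root expansion around $(\rho_2(1),1)$, all with the common singularity function $\rho_2(v)$.

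I would finish by linearity. Under the specialisation $\mathbf{v}=\mathbf{v}_k$ one has
\[
\widetilde{C}'^{(k)}(z,v)=\widetilde{C}'(z,\mathbf{v}_k)=\sum_{j=1}^{k-1}\widetilde{C'_{j}}(z,\mathbf{v}_k)+v\,\widetilde{C'_{k}}(z,\mathbf{v}_k)+\widetilde{C'_{\infty}}(z,\mathbf{v}_k),
\]
which is a finite combination, with coefficients $1$ and $v$ that are analytic at $v=1$, of functions each of the form $a_j(z,v)-b_j(z,v)\sqrt{1-z/\rho_2(v)}$ sharing the singularity function $\rho_2(v)$; such expansions add up to an expansion of the same type, whose leading coefficient is the corresponding (weighted) sum of the $b_j$'s, and this sum is positive at $(\rho_2(1),1)$ because it dominates the leading coefficient of $\widetilde{C}'(z)=\widetilde{C}'(z,\mathbf{1})$, which is genuinely square-root singular at $\rho$ by Lemma~\ref{lem:CpOGF}. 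Hence $\widetilde{C}'^{(k)}(z,v)$ has a square-root singular expansion around $\rho_2(v)$ for $v$ in a neighbourhood of $1$.

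The only step requiring more than bookkeeping is the non-interference of singularities in the perturbed system: one must be sure that, as $v$ leaves $1$, the dominant singularity of $\widetilde{C}'^{(k)}(z,v)$ stays the branch point created by the implicit system and is not overtaken by a singularity inherited from a block series $\widetilde{B}'_j$. This is exactly where subcriticality at $v=1$ — which produces the strict inequality $\rho_2(1)<\rho_1(1)$ — together with the linear worst-case growth of the degree-$k$ count — which forces $\rho_1$ and $\rho_2$ to be continuous at $1$ — does the work.
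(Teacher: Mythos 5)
Your proof is correct and follows essentially the same route as the paper: invoke the strongly recursive system of Lemma~\ref{unlsys}, use stability of subcriticality near $v=1$ (justified by the linear bound on the number of degree-$k$ vertices) to ensure the branch-point singularity is not overtaken by a singularity of the block series, apply Theorem~\ref{Thsystem1} to get square-root expansions for the components, and conclude by linearity. You are more explicit about verifying the hypotheses of Theorem~\ref{Thsystem1} and about the non-vanishing of the singular coefficient, but the underlying argument is the one the paper uses.
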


\begin{proof}
By Equation \eqref{eq:veq1} the singularity of the system $\mathbf{\widetilde{C}'}(z,\mathbf{v})$ at $\mathbf{v}=\mathbf{1}$ is $\rho_2$, the same as that of $\widetilde{C}'(z)$. As the system is strongly connected, every $\widetilde{C}'_j(z,\mathbf{1})$ has radius of convergence $\rho_2$. Since $\widetilde{C}'^{(k)}(z,1)$ is a linear combination of these functions, it has the same singularity, which fulfills $\rho_2\widetilde{C}'(\rho_2)<\rho_1$ due to the subcriticality assumption. By the stability property of subcriticality it follows that $(\widetilde{C}'^{(k)}(z,v),\widetilde{B}'^{(k)}(z,v))$ is subcritical near $1$, and hence we obtain a square-root singular expansion.  
\end{proof}

Consider the cycles index sums
$$Z_{\widetilde{\mc{C}}}(s_1,s_2,\ldots;v_{1,1},v_{1,2},\ldots;\ldots;v_{k,1},v_{k,2},\ldots;v_{\infty,1},v_{\infty,2},\ldots)$$
for (unpointed) connected graphs of $\widetilde{\mathcal{G}}$.
By taking $s_1 = s$, $s_i = z^i$ for $i \geq 2$, $v_{j,i} = 1$ for $1\le j <k, i\ge 1$ and $v_{k,i} = v^i$ for $i\ge 1$ we obtain its corresponding OGF  $\widetilde{C}^{(k)}(z,v)=Z_{\widetilde{\mc{C}}}(z,z^2,\ldots;1,1,\ldots;\ldots;v,v^2,\ldots;1,1,\ldots)$, where the variable $v$ counts the nodes of degree $k$.

\begin{lem}\label{lem:unrooting}
 $\widetilde{C}^{(k)}(z,v)$ has a singular expansion of order $\frac{3}{2}$ around its singularity $\rho_2(v)$ in a neighbourhood of $v=1$.
\end{lem}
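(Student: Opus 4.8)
The plan is to mimic exactly the unrooting argument that was used for $\tcC(z)$ in the proof of the asymptotic estimate for a subcritical class (Section~\ref{sec:sub_unlabelled}), now carried along with the extra parameter $v$ that marks vertices of degree $k$. First I would introduce the bivariate refinement $R^{(k)}(s,z,v)$ of the cycle index sum $Z_{\widetilde{\mc{C}}}$ in which the first variable $s_1$ is kept formal and $s_i=z^i$, $v_{k,i}=v^i$ for $i\ge 2$, so that $\widetilde{C}^{(k)}(z,v)=R^{(k)}(z,z,v)$. The preceding lemma (on $\widetilde{C}'^{(k)}(z,v)$) together with the system of Lemma~\ref{unlsys} gives that this bivariate object has a square-root singular expansion around $(\rho_2(v),\rho_2(v))$ in a neighbourhood of $v=1$: indeed the derived series $\widetilde{C}'^{(k)}$ is recovered from $R^{(k)}$ by the relation $R^{(k)}=s\,\partial_s Q^{(k)}$ analogous to $R=sQ_s$ in the earlier proof, and the singularity system for $R^{(k)}$ differs from that of $\widetilde{C}'^{(k)}$ only by replacing one occurrence of $z$ by $s$, so the branch point persists and the singularity function $\xi(z,v)$ in $s$ has a strictly negative derivative in $z$ at $z=\rho_2(v)$ by the same reasoning as in the lemma on $R(s,z)$.

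Next I would integrate. Using $Z_{\widetilde{\mc{C}}'}=\partial_{s_1}Z_{\widetilde{\mc{C}}}$, one writes
\[
Q^{(k)}(s,z,v)=Q^{(k)}(0,z,v)+\int_0^s R^{(k)}(w,z,v)\,\frac{\mathrm{d}w}{w},
\]
where $Q^{(k)}(0,z,v)$ is the contribution of automorphisms with no fixed vertex. By the subcriticality of $\widetilde{\mc{G}}$ — in particular criterion (b) of Definition~\ref{def:sub_unl}, which says the radius of convergence of $q(z)=Z_{\mc{C}}(0,z^2,\ldots)$ is strictly larger than $\rho$, hence than $\rho_2=\rho$ — and by the stability of subcriticality when $v$ varies locally around $1$ (the number of degree-$k$ vertices is at most $n$), the term $Q^{(k)}(0,z,v)$ stays analytic at $z=\rho_2(v)$ for $v$ near $1$. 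Since $R^{(k)}$ has a square-root expansion, the integral term acquires a singular expansion of order $3/2$ in the variable $s$ around $s=\rho_2(v)$ (this is the elementary fact recalled in Section~\ref{sec:singularexpansion}). Setting $s=z$ and using that $\xi(\rho_2(v),v)=\rho_2(v)$ with $\xi$ having negative $z$-derivative there, one writes $\rho_2(v)-z=(1-z/\rho_2(v))\lambda(z,v)$ for an analytic nonvanishing $\lambda$, exactly as in the earlier proof, converting the $3/2$-expansion in $\rho_2(v)-z$ into a $3/2$-expansion of $\widetilde{C}^{(k)}(z,v)=Q^{(k)}(z,z,v)$ in $1-z/\rho_2(v)$. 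Finally, since $\widetilde{C}^{(k)}$ and its ``higher-power'' correction $E^{(k)}(z,v)=\sum_{i\ge 2}\tfrac1i\,(\text{degree-}k\text{ cycle-index contributions at }z^i,v^i)$ differ by a term analytic at $\rho_2(v)$, the same $3/2$-expansion holds for whichever of these is the stated OGF, uniformly for $v$ in a real neighbourhood of $1$.

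The step I expect to be the main obstacle is verifying carefully that the bivariate object $R^{(k)}(s,z,v)$ really has a square-root expansion with the singularity function behaving correctly: one must check that the enlarged system from Lemma~\ref{unlsys}, after keeping $s_1$ formal and after the substitutions $s_i=z^i$, $v_{k,i}=v^i$, still satisfies all the hypotheses of Theorem~\ref{Thsystem1} (positivity, strong recursiveness of the dependency graph among $\widetilde{C}'_1,\dots,\widetilde{C}'_k,\widetilde{C}'_\infty$ — already noted in the text — aperiodicity, and above all analyticity of the right-hand sides at the candidate singular point). Analyticity at the singular point is exactly where subcriticality and its $v$-stability are used: the arguments $z^jC'(z^j)$ and $\mathbf{W}^{(l)}$ for $l\ge 2$ are evaluated well inside the disc of convergence of $Z_{\mc{B}'_r}$ because $\rho_2=\rho<1$ and $\rho_2\widetilde{C}'(\rho_2)<\rho_1$, and this is the only place the inequality from the subcriticality assumption enters. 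Once that is in place, everything else is the routine integration-and-substitution bookkeeping already carried out for $\tcC(z)$, now with $v$ as a passive parameter, and uniformity in $v$ follows from the uniformity statements in Sections~\ref{sec:singularexpansion} and~\ref{sec:transfer}.
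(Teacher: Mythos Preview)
Your proposal is correct and follows essentially the same route as the paper's own proof: introduce the trivariate refinement with $s_1$ kept formal, transfer the square-root expansion of the derived series to this refinement, integrate in $s$ to obtain a $3/2$-expansion, check that the $s=0$ term is analytic via the subcriticality condition~(b), and then collapse $s=z$ using the negative $z$-derivative of the singularity function. The final sentence about an $E^{(k)}$ correction is superfluous (once $s=z$ you already have $\widetilde{C}^{(k)}(z,v)$ itself), but this does not affect the argument.
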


\begin{proof}
We have to express the system of equations in Lemma \ref{unlsys} in terms of cycle index sums and analyze the trivariate generating functions \[\widetilde{C}^{(k)}(s,z,v)=Z_{\widetilde{\mc{C}}}(s,z^2,z^3,\ldots;1,1,\ldots;\ldots;v,v^2,\ldots;1,1,\ldots).\]
Obviously, $\widetilde{C}^{(k)}(z,z,v) = \widetilde{C}^{(k)}(z,v)$. Analogously we define $\widetilde{C}'^{(k)}(s,z,v)$. We obtain
\begin{align*}
\widetilde{C}^{(k)}(s,z,v) =\widetilde{C}^{(k)}(0,z,v) + \int_0^z\widetilde{C}'^{(k)}(s,z,v) \mathrm{d} s,
\end{align*}
by the same arguments as in Section \ref{sec:sub_unlabelled}. Due to stability, we obtain a square-root singular expansion for $\widetilde{C}'^{(k)}(s,z,v)$ as before, with a singular term of the form $(1-s/\bar{\rho}(z,v))^{1/2}$. Integration and subcriticality conditions lead to a singular expansion of the form
\[\widetilde{C}^{(k)}(s,z,v) = a(s,z,v)+b(s,z,v)\left(1-\frac{s}{\bar{\rho}(z,v)}\right)^{3/2}.\] At $s=z$ we can represent the singular part as \[\left(1-\frac{z}{\bar{\rho}(z,v)}\right)^{3/2} = \kappa(z,v)\left(1-\frac{z}{\tilde{\rho}_2(v)}\right)^{3/2},\] with an analytic factor $\kappa(z,v)$ and some analytic function $\tilde{\rho}(v)$. Since the singular manifold of $\widetilde{C}'^{(k)}(s,z,v)$ is the same as that of $\widetilde{C}^{(k)}(z,v)$, that is $z=\bar{\rho}(z,v)$ if and only if $z=\rho_2(v)$, it follows that $\tilde{\rho}(v) = \rho_2(v)$.
\end{proof}
Now, with the singular expansion in $\rho(v)$ given, we can immediately deduce a central limit law by the analogue to Theorem \ref{ThcombCLT1} for systems of equations.
\end{proof}

To calculate $\widetilde{\mu}_k$ we can use Expression~\eqref{meanformula}. The derivatives give
\begin{align*}
\mathbf{b^T} \mathbf{F}_v(\mathbf{C}(\rho,1); \rho,1)&=\left(E\cdot
\sum_{j=1}^{k,\infty} \sum_{i=1}^k (B'_{j})_{v_i}(z,\mathbf{W})
y_{k-i}\right)\bigg|_{(\mathbf{C}(\rho,1);\rho,1)},\\
 \mathbf{b^T} \mathbf{F}_z(\mathbf{C}(\rho,1);\rho,1)&=\left(E\cdot
\sum_{j=1}^{k,\infty}(B'_{j})_{z}(z,\mathbf{W})\right)\bigg|_{(\mathbf{C}(\rho,1);\rho,1)},
\end{align*}
where $y_{k-i}$ denotes the $(k-i)$-th coordinate of $\vy$
satisfying $\vy=\mathbf{F}(\vy;z,\vv)$, $(B'_{j})_{v_i}$ is the derivative of $B'_{j}$ with respect to $v_i$ and $E$ is the exponential term appearing in $\widetilde{C}'_\infty$. As the formula includes all partial derivatives of $(\widetilde{B}'_j)$, we see that the calculation of $\widetilde{\mu}_k$ will be very involved and it is (in general) not equal to $\widetilde{d}_k$.

\section{Computing the growth rate}\label{sect: constant}
In this section we show how to compute the exponential growth of
unlabelled $2$-connected, connected and general SP-graphs,
respectively. The main tool is the decomposition grammar and the systems of functional equations developed in Section~\ref{sec_sp_unl}.

We will first review the growth constants for subcritical classes of graphs in Section~\ref{sect:constants}.
Next we will approximate the exponential growth for the number
of unlabelled SP-graphs in two steps: the first step for $2$-connected unlabelled SP-graphs (Section~\ref{sect:2-conn-constant})
and the second step for unlabelled connected SP-graphs (Section~\ref{sect:conn-constant}).
To this end, we start with a functional system of the form $\textbf{y}=\textbf{F}_N(\textbf{y};z)$,
which is a truncated version (combined with iteration) of the original functional system $\textbf{F}(\textbf{y};z)$ determining $\textbf{y}$  and provides an \emph{approximation} of $\textbf{y}$. The solution of this system, together with the additional restriction given by a singular equation (given by the
determinant of the Jacobian of the previous  system) gives the desired growth constant.

The error introduced in the computations becomes smaller as soon as we take a better approximation for $\textbf{y}$
(in other words, more terms on the Taylor series of its components). We refer the reader to~\cite{BoFuKaVi07b} and~\cite{PiSaSo08},
in which the authors study a similar iterative scheme.

\subsection{Explicit growth constants}\label{sect:constants}

The computation of the growth constant for the classes under study
is based on the subcritical condition. In labelled case, the singularity $\rho$ is given by the equation
$\rho=\tau \exp\left(-\cB'(\tau)\right)$, where $\tau$ is the
smallest solution of the equation $y\cB''(y)=1$.
In case of unlabelled case, the growth constants obtained earlier in the literature are based on
\emph{explicit} expressions for $Z_{\cB'}(s_1,s_2,\dots)$, e.g. the classes of cacti graphs and outerplanar graphs.
Here we will show how to derive the growth constants of SP graphs, even when there is not an explicit expression for $Z_{\cB'}(s_1,s_2,\dots)$.

We have shown in Theorem~\ref{theo:ex} that the asymptotic number of cacti, outerplanar and SP graphs is of the form $c
\ \! n^{-5/2}\ \! \rho^{-n}\ \! n!\ \! (1+o(1))$ in the labelled case
and is of the form $c\ \! n^{-5/2}\ \! \rho^{-n}\ \! (1+o(1))$ in the unlabelled case.
In both cases (either labelled or unlabelled) the subexponential term $n^{-5/2}$ suggests that
all these classes have an \emph{arborescent} structure. In Table~\ref{table:constant-growth} the exponential growth constant $\rho^{-1}$ for connected
(and general) acyclic, cacti, outerplanar and SP-graphs are shown.
\begin{table}[htb]
\begin{center}
\begin{tabular}{|c||c|c|}
  \hline
  Family            &     Labelled          & Unlabelled
  \\\hline\hline
Acyclic               & $2.71828$ & $2.95577$\\\hline
  Cacti             & $4.18865$          & $4.50144$                        \\\hline
  Outerplanar       & $7.32708$        & $7.50360$                    \\\hline
  Series-Parallel   & $9.07359$           & $9.38527$                   \\\hline
\end{tabular}\bigskip
\caption{Exponential growth for distinct subcritical classes. All
constants are referred to connected and general classes.
}\label{table:constant-growth}
\end{center}
\end{table}
\subsection{Parameters for 2-connected SP-graphs.}\label{sect:2-conn-constant}
First we
compute the growth constant using the equations for networks (and
related classes). Later, the resulting singularity is transferred
to the counting series of $2$-connected SP-graphs
$Z_{\cB}(z,z^2,z^3,\dots)$.

Denote by  $Z_{\mathcal{S}}(s_1,s_2,\dots)$,
$Z_{\mathcal{P}}(s_1,s_2,\dots)$ and
$Z_{\mathcal{D}}(s_1,s_2,\dots)$ the cycle index sums associated
to series, parallel and general networks. Let
$S(z)=Z_{\mathcal{S}}(z,z^2,\dots)$,
$P(z)=Z_{\mathcal{P}}(z,z^2,\dots)$,
$D(z)=Z_{\mathcal{D}}(z,z^2,\dots)$. As it is shown in
Section~\ref{sec:ex}, the
system of equations which defines series, parallel and general
networks is
\begin{equation}\label{eq:networks-system}
\left\{
\begin{array}{rcl}
 D(z)&=&1+S(z)+P(z),\\
S(z)&=&zD(z)\left(1+P(z)\right),\\
P(z)&=&2\exp\left(\sum_{i\geq 1}\frac{1}{i}S^{(i)}(z)\right)-S(z)-2.
\end{array}
\right.
\end{equation}
Let $U(z)=\exp\left(\sum_{i\geq 2}\frac{1}{i}S^{(i)}(z)\right)$. This function is analytic at the singularity of $S(z)$.
We can isolate
$S(z)$ from System~\eqref{eq:networks-system} and obtain the implicit
relation
$$S(z)=z\left(2\exp(S(z))U(z)-1\right)\left(2\exp(S(z))U(z)-1-S(z)\right).$$
Observe that $U(z)$ depends on $S(z)$. In order to get an approximation of the smallest singularity of $S(z)$,
we start looking for an approximation of $S(z)$. This can be done
in the following way: let $S_0(z)=0$ and $U_0(z)=1$. Let $N$ be a
positive integer. For a function $f(z)$, analytic at the origin,
define $\mathrm{\texttt{pol}}_N\left\{f(z)\right\}$ as the Taylor
polynomial of degree $N$ of $f(z)$. Define $S_{k+1}(z)$ and
$U_{k+1}(z)$ recursively in the following way:
%
%
\begin{equation*}\label{eq:iteration}
\left\{
\begin{array}{rcl}
S_{k+1}(z)&=&\mathrm{\texttt{pol}}_N\left\{z\left(2\exp(S_{k}(z))U_{k}(z)-1\right)
\left(2\exp(S_{k}(z))U_{k}(z)-1-S_{k}(z)\right)\right\}, \\
U_{k+1}(z)&=&\mathrm{\texttt{pol}}_N\left\{\exp\left(\sum_{i=2}^{N}\frac{1}{i}S_{k+1}^{(i)}(z)\right)
\right\}.
\end{array}
\right.
\end{equation*}
We stop when $S_{k-1}(z)=S_{k}(z)$ (equivalently, when
$S_{k}(z)=\mathrm{\texttt{pol}}_N\{S(z)\}$). Denote then
$u(z)=\exp\left(\sum_{i=2}^{N}\frac{1}{i}S_{k}^{(i)}(z)\right)$,
and consider the solution $s(z)$ of the equation
$$s(z)=z\left(2\exp(s(z))u(z)-1\right)\left(2\exp(s(z))u(z)-1-s(z)\right).$$
Taking $u(z)$ instead of $U(z)$ introduces an error, which can be controlled in that the larger $N$ is, the smaller the error. Under these assumptions, $s(z)$ is defined by an equation
of the form $s=H(s;z)$, with
$H(y;z)=z(2\exp(y) u(z)-1)(2\exp(y)
u(z)-1-y)$. Consequently, we find its smallest singularity by
solving the characteristic system $y=H(y;z),\, 1=H_{y}(y;z)$. In Table~\ref{table:rho-biconnected} the values obtained for several choices are shown. In particular, one can observe that the accuracy in the computation are improved by increasing the order of truncation.
\begin{table}[htb]
\begin{center}
\begin{tabular}{|r|l|}
  \hline
   $N$            &     Singular point ($\rho$)   \\\hline\hline
$5$& $\textbf{0.124}21863192426192376$  \\\hline
$10$& $\textbf{0.124199}19715484630978$          \\\hline
$20$& $\textbf{0.12419909378}526277564$        \\\hline
$50$& $\textbf{0.12419990937841528588}$           \\\hline
%
\end{tabular}\bigskip
\caption{Values for $N$ and the corresponding value for $\rho$. 
}\label{table:rho-biconnected}
\end{center}
\end{table}

These computations give the truncated value of $z$, $\rho_1\approx0.12420$. This constant is
slightly smaller than the one which is obtained in the labelled case
(whose value is approximately $0.12800$. See~\cite{BoGiKaNo07}). This singularity is the same for the parallel family and for general networks.
Using the same ideas one finds that the smallest singularity for exchanging poles networks is strictly bigger than $\rho_1$.
Consequently, applying the transfer theorems of singularity analysis we conclude that the number $\widetilde{b_n}$ of $2$-connected unlabelled
series-parallel graphs on $n$ vertices is
$$\widetilde{b_n}=\widetilde{ b}\ \! n^{-3/2}\ \! \gamma_1^{n} \ \!(1+o(1)),$$
where $\gamma_1=\rho_1^{-1}\approx 8.05159$ and $\widetilde{b}$ is a constant.
\subsection{Parameters for connected SP-graphs.}\label{sect:conn-constant}
In order to approximate the growth constant for connected SP-graphs,
we need to refine the analysis over the cycle index sum for pointed 2-connected unlabelled SP-graphs, which is
defined in terms of simpler pointed classes using the dissymmetry
theorem for tree-decomposable structures developed in~\cite{ChFuKaSh07}. In particular, we have that $Z_{\cB'}=\ell'+Z$,
where
$$Z=Z_\mathcal{R}+Z_{\mathcal{M}}-Z_{\mathcal{R}-\mathcal{M}}.$$
Here $\ell'$ refers to a pointed link-graph
(with cycle index sum $s_1$), and $Z_\mathcal{R}$,
$Z_\mathcal{M}$, $Z_{\mathcal{R}-\mathcal{M}}$ are the cycle index sums associated to the classes
of pointed 2-connected graphs with a pointed $\mathcal{R}$-brick, a
pointed $\mathcal{M}$-brick and an a pointed edge
$\mathcal{R}-\mathcal{M}$ in the associated
$\mathcal{R}\mathcal{M}\mathcal{T}$-tree (see~\cite{ChFuKaSh07} for
proper definitions). Each one of these series can be
written in terms of the cycle index sum associated to series
networks, general networks and networks which remains invariant when
a change of the pole is applied. Denoting these series by
$Z_\mathcal{S},Z_\mathcal{D}$ and $\overline{Z}_\mathcal{D}$,
respectively, we get
\begin{equation}\label{eq:sys1}
\left\{
\begin{array}{lll}
  Z_{\mathcal{R}}&=&\frac{1}{2}\left(s_1^2\left(Z_\mathcal{D}-Z_\mathcal{S}\right)^2Z_\mathcal{D}+s_2\left(Z_\mathcal{D}^{(2)}-Z_\mathcal{S}^{(2)}\right)\overline{Z}_\mathcal{D}\right) \\
  Z_{\mathcal{M}}&=& s_1\left(2\exp\left(\sum_{i>0}\frac{1}{i}Z_{\mathcal{S}}^{(i)}\right)-2-2Z_{\mathcal{S}}-\frac{1}{2}\left(Z_{\mathcal{S}}^2+Z_{\mathcal{S}}^{(2)}\right)\right) \\
  Z_{\mathcal{R}-\mathcal{M}}&=& s_1 Z_{\mathcal{S}}\left(2\exp\left(\sum_{i>0}\frac{1}{i}Z_{\mathcal{S}}^{(i)}\right)-2-Z_{\mathcal{S}}\right)
\end{array}
\right.
\end{equation}
Recall that in the previous equation
$Z_\mathcal{\star}^{(i)}=Z_\mathcal{\star}(s_{i},s_{2i},\dots)$. We denote by $F(z)$ the solution of the equation
$F(z)=z\ \!\overline{\exp}(Z_{\cB'}(F(z),F(z^2),\dots))$ ($\overline{\exp}$ denotes the Polya operator for sets in the unlabelled framework). This
equation can be written in the following way: write
$J(z)=Z_{\cB'}(F(z),F(z^2),\dots)$ and $A(z)=\exp\left(\sum_{i\geq
2}\frac{1}{i}J\left(z^{i}\right)\right)$. Consequently,
$F(z)=z\exp\left(J(z)\right)A(z)$. We define
 $\bS(z)=Z_{\mathcal{S}}(F(z),F(z^2),\dots)$ and
$\overline{\bS}(z)=\overline{Z}_{\cS}(F(z),F(z^2),\dots)$. Define
also the following functions: $E(z)=\exp\left(\sum_{i\geq
1}\frac{1}{2i}\bS\left(z^{2i}\right)\right)$,
$M(z)=\exp\left(\sum_{i\geq
1}\frac{1}{2i+1}\bSb\left(z^{2i+1}\right)\right)$, and
$G(z)=\exp\left(\sum_{i\geq
2}\frac{1}{i}\bS\left(z^{i}\right)\right)$. Observe that all $A(z)$,
$E(z)$, $G(z)$ and $M(z)$ are analytic functions at the singularity
of $F(z)$ (which is the same singularity as the one of $J(z)$).
With these definitions and using System~\eqref{eq:sys1} we get the
following system of equations:
\begin{eqnarray*}
J(z)&=&z\exp(J(z))A(z)+\\\nonumber
    &&\left(z\exp(J(z))A(z)\left(2\exp(\bS(z))G(z)-1-\bS(z)\right)\right)^2(\exp(\bS(z))G(z)-1/2)+\\
    &&F(z^2)\left(2\exp(\bS(z^2))G(z^2)-1-\bS\left(z^2\right)\right)\left(E(z)M(z)\exp(\bSb(z))-1/2\right)+\\\nonumber
    &&z\exp(J(z))A(z)\left(2\left(\exp(\bS(z))G(z)-1-\bS(z)\right)-\left(\bS(z)^2+\bS\left(z^2\right)\right)/2\right)-\\\nonumber
    &&z\exp(J(z))A(z)\bS(z)\left(2\exp(\bS(z))G(z)-2-\bS(z)\right)\\\nonumber
\bS(z)&=&\left(2\exp(\bS(z))G(z)-\bS(z)-1\right)z\exp(J(z))A(z)\left(2\exp(\bS(z))G(z)-1\right)\\\nonumber
\bSb(z)&=&\left(2\exp(\bS\left(z^2\right))G\left(z^2\right)-1\right)z\exp(J(z))A(z)+\\
       &&\left(2\exp(\bS\left(z^2\right))G\left(z^2\right)-1\right)F(z^2)\left(2E(z)M(z)\exp(\bSb(z))-1-\bSb(z)\right).\nonumber
\end{eqnarray*}
By a fixed-point argument we are able to compute from this system
the first terms in the Taylor development of the series which appear
in the previous equations. The procedure is the same way as in Section~\ref{sect:2-conn-constant} (taking, for instance, the initial
conditions $A(z)=E(z)=G(z)=M(z)=1$ and $F(z)=\bS(z)=\bSb(z)=0$). Later, we consider the simplified
system in which we approximate all analytic functions by their Taylor series, up to a prescribed index. Consequently, the reduced system can be written in the compact form
$(J,\bS,\bSb)=\textbf{F}(J,\bS,\bSb;z)$. In order to find the critical points, we consider the associated singular system of equations, which is obtained from the system $(J,\bS,\bSb)=\textbf{F}(J,\bS,\bSb;z)$
by considering the determinant of its Jacobian.

Solving this system using a symbolic manipulator (for instance, \texttt{Maple}), we get that the singular value of $z$ is $\rho_2\approx 0.10655$ for a truncation to order $N=50$. Hence, the numbers $\widetilde{c_n}$ and $\widetilde{g_n}$ of unlabelled
connected and general SP-graphs on $n$ vertices are
$$\widetilde{c_n}=\widetilde{ c}\ \! n^{-5/2}\ \!\gamma_2^n\ \!(1+o(1)),\quad \widetilde{g_n}= \widetilde{g}\ \! n^{-5/2}\ \!\gamma_2^n\ \!(1+o(1)), $$
where $\gamma_2\approx 9.38527$ and $\widetilde{c},\,\widetilde{g}$ are constants.
\\
\begin{paragraph}{\textbf{Acknowledgements.}} Bilyana Shoilekova, Stefan Vigerske, Philippe Flajolet and Marc Noy are greatly thanked for inspiring discussions and useful comments. We also thank the anonymous referees for their useful suggestions.
\end{paragraph}

\bibliography{subcritical}
\bibliographystyle{abbrv}

\end{document}